\documentclass[preprint]{article}
\usepackage{amsthm,amsmath,amssymb, algorithm, algpseudocode, bm, graphicx, dsfont,epstopdf, authblk}
\RequirePackage[colorlinks,citecolor=blue,urlcolor=blue]{hyperref}
 \usepackage[section]{placeins}

\newcommand{\dif}{\mathop{}\!\mathrm{d}}

\newtheorem{prop}{Proposition}
\newtheorem{lem}{Lemma}
\newtheorem{cor}{Corollary}
\theoremstyle{definition}
\newtheorem{defn}{Definition}

\newtheorem{rmk}{Remark}

\newcommand{\D}{ \mathcal{ D } }

\newcommand{\F}{ \mathcal{ F } }

\newcommand{\G}{ \mathcal{ G } }

\newcommand{\N}{ \mathbb{ N } }
\newcommand{\E}{ \mathbb{ E } }
\newcommand{\R}{ \mathbb{ R } }
\newcommand{\Prb}{ \mathbb{ P } }

\newcommand{\Z}{ \mathbb{ Z } }

\newcommand{\totalm}{\vert\boldsymbol{m}\vert}

\newcommand{\bs}[1]{\boldsymbol{#1}}
\newcommand{\wt}[1]{\widetilde{#1}}
\newcommand{\Option}[1]{\Statex \textbf{#1}}
\newcommand{\Indent}[1]{\State \hspace{\algorithmicindent} #1}
\makeatother
\newcommand{\EndIndent}{}

\title{Subordinated Wright-Fisher priors}

\author[1]{Nathan A. Judd}

\author[2]{Dario Span\`o}

\affil[1]{Okinawa Institute of Science and Technology, Japan}
\affil[2]{Department of Statistics, University of Warwick, United Kingdom}

\begin{document}

\maketitle

\begin{abstract}
A new class of time-dependent Dirichlet priors is introduced as a generalisation of the Wright-Fisher diffusion, allowing discontinuities in the trajectories, as well as non-Markovian memory. This class is obtained as a simple stochastic time-change (subordination), interpreted as a hyper-prior assigned to the operational time-clock of a Wright-Fisher diffusion. Explicit representations and exact sampling algorithms are obtained for prior and posterior distributions of the process and of its clock, given partially exchangeable data sampled at discrete time-points. Computability and conjugacy rely on a novel class of discrete dual processes, generalising existing results on duality and computable filters.

\vspace{1em}
\noindent \textbf{Mathematics Subject Classification:} Primary: 62M20; secondary 62F15, 60G35, 62M05, 62M15, 60G07.

\vspace{1em}
\noindent \textbf{Keywords:}
Dependent Dirichlet measures, Wright-Fisher diffusion, Stochastic duality, Computable inference, Computable filter, Coalescent, Subordination, Fractional kinetic equations, Hidden Markov Model
\end{abstract}

\section{Introduction}

Wright-Fisher (WF) diffusions are a class of continuous-time Markov processes describing the dynamics of $K$ proportions of types. Their state space is the $(K-1)$-dimensional simplex $\Delta_K:=\{{x}\in[0,1]^K : \sum_{i=1}^K{x_i}=1\}$, $K\in\N$. 
A WF diffusion is time-reversible and has a Dirichlet stationary distribution. 
It has been introduced and studied in depth in the context of population genetics, to describe the evolution in time of a population's relative proportions of $K$ allele frequencies, under the assumption of parent-independent mutation (see e.g. \cite{Ewens79} or \cite{SomeMathModels}). Recently, however, the WF diffusion has been increasingly considered for a wider range of applications, including stochastic volatility models in financial econometrics \cite{GOUR06}, neuronal signal models \cite{donofrio_jacobi_fpt}, denoising in machine learning \cite{chandra2025} and more generally in methodological statistics, as a continuous-time-dependent extension of the Dirichlet prior for Bayesian inference and filtering problems \cite{Papaspiliopoulos_2014,kkk, GENONCATALOT2003}. This is also the context considered in this paper:
For any $N\in\N,$ let data $y_{0:N}:=(y_0,...,y_N)$ observed at times $0=t_0<...<t_N$ be modeled as realisations of random vectors $Y_i := Y(t_i)$, sampled from an unobservable parameter - or signal - $({{X}}_t: t\geq 0)$ evolving through time in $\Delta_K$:
\begin{equation}
        \begin{aligned}
        \label{The WF Model}
            Y(t_i) \vert {X}(t_i) &\sim f_{{X}(t_i)} \ \ \ i=1,\ldots, N\\
        ({X}(t):t\geq 0) &\sim \text{WF}({\theta},\nu^*).
        \end{aligned}
\end{equation}
Here $f_{ x}$ is the conditional likelihood (emission density) of the data, given the current state $ x$ of the parameter (signal), and is typically assumed to be either {\em i.i.d.} categorical or multinomial, with parameter $ x$, for a given sample size. The prior law WF(${\theta},\nu^*$) of the stochastic process $ X$, given a \lq\lq mutation\rq\rq\ parameter $\theta=(\theta_1,\ldots,\theta_K)\in(0,\infty)^K,$  and an initial distribution $ X(0)\sim \nu^*$, is fully determined by the associated generator, recalled by formula \eqref{WF gen} in Section \ref{sec wf properties}, and has the Dirichlet($\theta$) distribution as unique stationary distribution.
The model \eqref{The WF Model} has been shown by \cite{Papaspiliopoulos_2014} to possess computable posterior distributions, mapping finite mixtures of Dirichlet to finite mixtures of Dirichlet.\\
As with every diffusion, the trajectories of a WF are almost surely continuous, so it is a sensible choice of prior if it is reasonable to expect no jumps in any infinitesimal time-gap. This assumption however may not hold in every applied context. 
In finance, jump behaviour owed e.g. to shocks \cite{HAMILTON1990}, may be difficult to discard a priori and discontinuities in neuronal signals are also plausible \cite{donofrio_jacobi_fpt}. Equally, the Markov property of a WF might not be adequate to describe phenomena for which it is more reasonable to assume long-range autocorrelation (see e.g. \cite{hawkes1971}, \cite{Ogata01031988}, \cite{Hawkes01022018}, \cite{Tina2006}, \cite{Samor2016}). 

 In this paper a new, large class of time-dependent Dirichlet priors is introduced, retaining much of the tractability of WF diffusions but, at the same time, allowing trajectories performing jumps alongside diffusive behaviour, as well as permitting long-range memory as opposed to Markovian lack of memory. This generalised class, which we call {\em subordinated Wright-Fisher} (sWF) {\em priors}, is obtained by a simple transformation of the WF diffusion, known as {\em subordination} \cite{Bochner49}: an arbitrary stochastic time-change applied through an independent, non-decreasing  stochastic process $C=(C(t):t\geq 0)$, starting from $C(0)=0$. 
The new prior is the simplex-valued process  $\widetilde{{X}}(t):= X\circ C (t)= {{X}}(C(t))$. The family of time-changes $C$ considered here includes: one-dimensional L\'evy subordinators  (i.e. processes with almost-surely non-negative, independent, stationary increments); inverse subordinators (i.e. crossing-time processes of subordinators); any composition of the latter with the former type. 
We will fully characterise the sWF prior in terms of its transition function, generator and resolvent, and derive explicitly the posterior distributions of both $\wt X$ and $C$. We will provide exact simulation algorithms and show how computability of posterior inference (e.g. filter) depends on the choice of $C$.
In particular, we will show that, whenever the initial law of the sWF is a finite mixture of Dirichlet distributions, the posterior distributions of a sWF can also be written as a collection of finite mixtures of Dirichlet distributions with explicit weights. This result hinges on the existence of (1) an infinite-series representation of the sWF's transition function, which inherits its structure from its diffusive counterpart and, (2), a pure-jump stochastic dual process, with non-increasing trajectories, generalizing the dual used in \cite{Papaspiliopoulos_2014}, determining the posterior mixture weights. In this paper we focus on $K$-dimensional Dirichlet but all our results can be readily generalised to a non-parametric setting, to obtain time-dependent Dirichlet Process priors via subordinated Fleming-Viot processes, structurally identical \cite{TFofFV}.
\subsubsection*{Time-change as hyper-prior}
Looking at the setting through a Bayesian lens, choosing a stochastic process to time-change a WF prior $(X(t))$, essentially corresponds to assigning a hyperprior $C$ on the \lq\lq clock\rq\rq\ of the time parameter $t$ of $(X(t))$, which modulates the shape of the discontinuities of $( {\widetilde X}(t))$, and its memory. 
For example, choosing a L\'evy subordinator for $(C(t))$ with characteristic L\'evy triplet $(0,\beta,\pi)$ (see below, Section \ref{sec sub time change}), corresponds to a hyperprior selecting almost surely Markovian memory \cite{Bouleau84, kenlevy}, whereby the drift $\beta\geq 0$ represents the tendency of the prior to evolve as a diffusion (no jumps), and the L\'evy measure $\pi$ governs the frequency and sizes of the possible jumps. The trivial hyperprior $C(t)\equiv t$ will, of course, recover the classical WF diffusion prior.
An inverse subordinator clock, on the other hand, will keep the trajectories continuous but the memory will be non-Markovian: its underlying L\'evy measure $\pi$ will regulate the range and strength of the dependence of the diffusion from its far past. 
Even in such cases, the evolution equation of $\wt X$ can still be described quite explicitly as generalisation of certain classes of fractional kinetic equations (see Section \ref{sec sub inv time change} below; also see e.g. \cite{Kol09}, \cite{Toaldo2015} and references therein). 
The best-known case of non-Markovian time-change is given by the crossing-time process of a stable subordinator, giving rise to Caputo-type fractional evolution equations (\cite{meer2012}, \cite{mainardi2010}). Non-Markovian, discontinuous sWF can be obtained by a two-layer composition of two independent clocks, say $C=R\circ S$, where $R$ an inverse subordinator and $S$ a subordinator. 
Thus, if one is able to operationally control the hyperprior-level clock $(C(t))$, one can then leverage the tractability of the WF diffusion, particularly of its transition function, to describe in deep detail the structure of the time-marginal distributions of the sWF process $\widetilde{ X}$, both {\em a priori} and {\em a posteriori} (conditionally on data), and derive exact likelihood-based algorithms for simulation and inference.
\subsubsection*{Time-dependence structure and stochastic duality}

The model \eqref{The WF Model} assumes exchangeability of categorical observables within the same time, but only partially exchangeability between distinct times, since the underlying parameter $X(t)$ differs almost surely at each time $t$. 
The transition function of the WF diffusion thus plays a crucial role, in that it describes the dependence structure of the parameter $X$ between distinct times, affecting the correlation between non-synchronous samples. 
In this paper, it will be shown (Section \ref{sec time-marginal}) that the time-marginal distributions of the sWF prior admit two series representations, either as an infinite eigenvalue-eigenfunctions series or as an infinite mixture of posterior Dirichlet measures, replicating the structure of the classical WF transition function. In fact, the Laplace transform of $C(t)$ is all one needs to determine the eigenvalues of the evolution operator of $(\wt X(t))$ as a function of the WF eigenvalues. To ensure amenability for exact simulation and inference (see Sections \ref{sec algorithms}-\ref{sec posterior}), the notion of stochastic duality turns out to be crucially helpful in combining effectively the dynamics of the prior $X$ and the hyperprior clock $C$, and obtain a complete description of the evolution of $\wt X=X\circ C$.  \\ 
The WF diffusion has a known “moment duality” with respect to a K-dimensional pure-death
process related to a function of Kingman’s coalescent tree (see e.g. \cite{Etheridge09}, \cite{Griffiths10} and below, \ref{sec dual proc}). We will establish (Proposition \ref{prop sub dual}) that sWF processes also possess a moment dual pure-jump process in $\Z_+^K$, generalising the WF dual.  
We will prove that the new process is still non-increasing but may make jumps larger than one. The holding time distributions of the new dual coincide with the eigenvalues of the sWF transition operator, and at the same time also give the weights in its expansion as mixture of Dirichlet posteriors (see Section \ref{sec swf dual}), regulating borrowing strength. We will use the latter to generalise existing exact simulation methods for WF diffusions \cite{Jenkins_2017, JSant22} to encompass sWF distributions.\\
Duality has been first used for applications to filtering problems by \cite{Papaspiliopoulos_2014} where the model was framed as a Hidden Markov Model (HMM) and the dual process was shown to guarantee computability of the filtering distributions. We will generalise this here to sWF (Section \ref{sec posterior}). For Markovian sWF (subordinator clocks), the HMM structure is immediately preserved; for non-Markovian sWF signals (inverse subordinator clocks), we will consider the filtering problem for the augmented {\em Markov} signal $V=(X\circ C, C):$ the model is thus embedded in an HMM where the likelihood of the data depends on V only through its first coordinate $X\circ C.$ Posterior inference for $V$ and for both its coordinates $X\circ C $ and $C$ will be studied in Section \ref{sec frac posterior}. We will show that computability of the filter for $X\circ C$ is again guaranteed by the dual, as long as the {\em prior} distribution of $C$ is amenable to exact sampling. The sWF provide, for the first time, instances of computable posteriors (or filters) for time-reversible Dirichlet signals, induced by a dual process which does not obey one condition given in \cite{Papaspiliopoulos_2014}, and was never violated in any of the subsequent generalisations (\cite{omiros_2016, kkk,GRSZ21}): that the dual evolves almost surely through negative {\em unit} jumps. Indeed sWFs demonstrate that the pure-death property assumption on the dual is not needed to ensure computability. In fact, the main feature of the dual, responsible for computability would rather appear to be its almost-surely descending trajectories.
Unit vs non-unit negative jumps seem to just reflect the (dis)continuity properties of the sWF signal. \\
 All the above results hinge on the fact that duality is preserved under time-change (Proposition \ref{prop sub dual}), a very simple observation which, to the best of our knowledge, has not been stated explicitly, nor used for applications anywhere so far, but could be easily leveraged in many other contexts. Denoising in Machine Learning seems a plausible promising area. Still within the context of filtering time-dependent priors, we expect our approach to work also for the analysis of other non-Dirichlet classes of subordinated signals, for example those within the class of Pearson diffusions analysed in \cite{Papaspiliopoulos_2014} and \cite{kkk}, as well as their nonparametric, measure-valued extensions such as Fleming-Viot processes and Dawson-Watanabe superprocesses (time-dependent Gamma completely random measures)  (\cite{omiros_2016}).
\subsection{Structure of the paper}
The paper will begin with a recollection of the main distributional properties of the WF diffusion in Section \ref{sec wf properties}, including the moment duality relation between WF and the death process of Kingman's \lq\lq typed\rq\rq\ coalescent. In Section \ref{sec time change}, sWF processes will be defined; we will first introduce the two types of clock used for subordination; then we will describe in detail the distributional properties of sWF (generator, transition function, resolvent operator, memory, evolution equation); duality will be finally established between any sWF and a non-decreasing discrete process, extending the Coalescent WF-dual. In Section \ref{sec algorithms}, exact sampling schemes for the sWF transition function are established and illustrated. In Section \ref{sec posterior}, the posterior distributions for a sWF prior are explicitly derived and computable inference of the corresponding filter is discussed, where we distinguish between the Markovian and non-Markovian cases. The proofs of all the statements will be deferred to the Appendix.
\section{Wright Fisher diffusions}\label{sec wf properties}
In this Section we recall relevant distributional properties of the neutral $K$-type WF diffusion. The diffusion's dynamics are fully determined by a vector of (parent-independent) mutation parameters $\theta=(\theta_1\ldots,\theta_K)\in\R_+^K $ and
the associated generator
\begin{align}
    \label{WF gen}
    {\cal L} &= \frac{1}{2} \sum_{i=1}^K{(\theta_i-\vert {\theta} \vert x_i)} \frac{\partial}{\partial x_i} + \frac{1}{2}\sum_{i,j=1}^{K}{x_i(\delta_{ij}-x_j)}\frac{\partial^2}{\partial x_i \partial x_j},
\end{align}where $\delta_{ij}$ is the Kronecker delta, i.e., $\delta_{ij}=1$ if and only if $i=j$ and $\delta_{ij}=0$ otherwise, and
$\vert {\theta}\vert = \sum_{i=1}^K{\theta_i}$. The domain of the generator is the space $C^2(\Delta_K)$ of functions with continuous derivatives up to the second order.
The operator ${\cal L}$ modulates the increments of the diffusion over any infinitesimal time-interval (i.e. ${\cal L} f( x):= \lim_{t\downarrow 0}t^{-1}\left[\E[f( X(t))\mid  X(0)= x]-f( x)\right]$). Continuity (diffusiveness) of the trajectories can be read off its elliptic form.
The associated transition function $p_t({{x}},\dif {u})=\mathbb{P}[X(t)\in \dif u\mid X_0= x] $    
admits a representation as mixture of posterior Dirichlet distributions (see e.g. \cite{griffiths_1979, TAVARE1984, GRIFFITHS_Li_1983, Griffiths10}), for which Algorithm \ref{alg:simulating_WF} provides an exact sampling method. Let $\mathcal{M}(\boldsymbol{l};\vert \boldsymbol{l}\vert,{{x}})=\frac{\vert \boldsymbol{l}\vert !}{l_1! \cdots l_K !}\prod_{i=1}^{K}{x_i^{l_i}}$   be the Multinomial distribution on $Z_+^K$ with parameter $(|\bs l|, x)$ and, for every $\theta\in(0,\infty)^K$, denote with ${\cal D}_{\theta}$ the Dirichlet distribution on $\Delta_K$, with Lebesque-density
$$\mathcal{D}_{{\theta}}( x)=\frac{\prod_{i=1}^{K}{\Gamma(\theta_i)}}{\Gamma(\vert{\theta}\vert)}\prod_{i=1}^{K}{x_i^{\theta_i-1}}\mathbb{I}_{\Delta_K}( x),$$
where  $\Gamma(a)=\int_0^\infty x^{a-1} e^{-x} \dif x$ for $a>0$.
Define $\lambda_{m}:=m(m + \vert {\theta}\vert -1)/2$, $m=0,1,\ldots$. 
\begin{algorithm}
\caption{Exact simulation of $p_t(x,\cdot)$ \cite{Jenkins_2017}.}
\label{alg:simulating_WF}
\begin{algorithmic}[1]
    \State Simulate $|Z(t)| \sim q_\bullet^{|{\theta}|}(t)$
    
    \State Given $|Z(t)| = m$, simulate $\bs L \sim \mathcal{M}(m, x)$ 
    
    \State Given $\bs L = (l_1, l_2, \dots, l_K)$, simulate $U \sim \mathcal{D}_{{\theta} + \boldsymbol{l}}$
    
    \State \Return $U$
\end{algorithmic}
\end{algorithm}
For any $t$, $q_{\bullet}^{|{\theta}|}(t)=\{q_n^{|{\theta}|}(t):n\in\Z_+\}$ is a probability mass function on $\Z_+$; namely, it describes the distribution of the state at time $t$ of a pure-death process $(|Z(t)|:t\geq 0
)$ on $\mathbb{Z}_+\cup\{\infty\}$, leaving infinitely fast its entrance state $\{\infty\}$ and jumping from each $n$ to $n-1$ at rate $\lambda_n$. Such a process  
has a well-established {\em stochastic duality} relation with the WF diffusion, which will be recalled in section \ref{sec dual proc} (see also  \cite{TAVARE1984, griffiths06}). 
The probability $q_\bullet^{\vert{\theta}\vert}(t)$, has an infinite-series expression, which we will generalise for sWFs in Proposition \ref{prop sub dual}, formula \eqref{eq wtq}. The only non-trivial step in Algorithm \ref{alg:simulating_WF} is thus the first one, sampling from $q_\bullet^{\vert{\theta}\vert}(t)$. An exact method is provided by Algorithm 2 in \cite{Jenkins_2017}, which will be extended into our Algorithm \ref{alg:simulating_sub_dual} to cover sWF duals.

From the structure of Algorithm \ref{alg:simulating_WF} and the observation
\begin{align*}
    \mathcal{D}_{{\theta}}(\dif {{x}})\mathcal{M}(\boldsymbol{l};k,{{x}})\mathcal{D}_{{\theta}+\boldsymbol{l}}(\dif {u})=\mathcal{D}_{{\theta}}(\dif {u})\mathcal{M}(\boldsymbol{l};k,{u})\mathcal{D}_{{\theta}+\boldsymbol{l}}(\dif {{x}}).
\end{align*}
follow stationarity, i.e. $\int_0^1{\mathcal{D}_{{\theta}}(\dif {{x}})p_t({{x}},{u})}=\mathcal{D}_{{\theta}}(\dif {u})$, and reversibility, i.e. detailed i.e. ${\mathcal{D}_{{\theta}}(\dif {{x}})p_t({{x}},{u})}={\mathcal{D}_{{\theta}}(\dif {u})p_t({u},{{x}})}$ for all $x,u\in\Delta_K, ~t\geq 0$.
\\
Two equivalent series representations are available for $p_t$, one as probabilistic mixture of posterior Dirichlet (yielding Algorithm \ref{alg:simulating_WF}), the other in terms of orthogonal polynomial kernels see \cite{Griffiths_2013}. Both representations have a sWF extension, which will be shown in Proposition \ref{prop tf sub}.
\subsection{Dual process}\label{sec dual proc}
Let $X$ and $Z$ two stochastic processes with state spaces $E$ and $F$, respectively 

We say that stochastic duality holds between $X$ and $Z$ with respect to the duality function $h:E\times F\to \R$ if, for every $t$,
\begin{align}
    \label{eqn stochastic duality}
   \E[f(X(t))\vert X(0)=x]=\E[f(Z(t))\vert Z(0)=z]~\ \forall x\in E,z\in F.
\end{align}
 Define\begin{align}
\label{eqn the h function}
g({{x}},\boldsymbol{m})&:=\frac{\mathcal{M}(\boldsymbol{l};k,{{x}})}{\E_{\mathcal{D}_{{\theta}}}[\mathcal{M}(\boldsymbol{l};k,{{X}})]}= \frac{{(\vert {\theta}\vert)}_{\vert \boldsymbol{m}\vert }}{\prod_{i=1}^K{{(\theta_i)}_{m_i}}}\prod_{i=1}^K{x_i^{m_i}}.
\end{align}

In Bayesian terms, the function $g$ can be interpreted as the Radon-Nykodym derivative of the posterior Dirichlet with respect to the prior, under the assumption of categorical or multinomial likelihood. The well-known conjugacy property of ${\cal D}_{{ \theta}}$ with respect to multinomial sampling thus can be read as $\mathcal{D}_{{\theta}+\boldsymbol{l}}({{x}})=g({{x}},\boldsymbol{l})\mathcal{D}_{{\theta}}({{x}})$. 

When ${{X}}\sim WF$, the function $g$ of the form \eqref{eqn the h function} establishes a duality relation between ${{X}}$ and a multi-type pure-death process $ Z$ on the state-space $\Z_+^K$, 
where ${Z}$ evolves through jumps
\begin{align}
\label{bcprates}
\boldsymbol{m}\to \boldsymbol{m}-\boldsymbol{e}_i\ \ \textit{with rate}\ \ \lambda_{|\boldsymbol{m}|} \frac{m_i}{\boldsymbol{m}}, i=1,\ldots,K,\end{align}
where $\lambda_n:=\frac{1}{2}n(n+|{\theta}|-1)$ with $x\in\Z_+,~\theta\in\R_+^K$ 
and $\boldsymbol{e}_i=(\delta_{ij})_{j=1,\ldots,K}$ is the $K$-dimensional base vector. It is possible to see this by applying the WF generator to $g$ \cite{Etheridge09}, \cite{Griffiths10}. The generator $\widehat L$ of the dual is obtained, for every $({{x}},\boldsymbol{m})\in\Delta_K\times \Z_+^K$, from \begin{align}
\label{proof WF dual}
    {\cal L} g(\cdot,\boldsymbol{m})({{x}}) &= \lambda_{\totalm}\sum_{i=1}^K{\frac{m_i}{\totalm}[g({{x}},\boldsymbol{m}-\boldsymbol{e}_i)-g({{x}},\boldsymbol{m})]} =: \widehat{{\cal L}} g({{x}},\cdot) (\boldsymbol{m}).
\end{align} 
In population genetics, the dual process tracks the number of non-mutant surviving lines of descent of each type observed in a sample of $|\boldsymbol{m}|$ individuals.
 \begin{rmk}
The form \eqref{bcprates}-\eqref{proof WF dual} shows that 
    the process $|Z|$, defined by $|Z(t)|:=\sum_{i=1}^K Z_i(t),\ t\geq 0,$ is an autonomous pure death process in $\Z_+$ with transition rates from $n$ to $n-1$ equal to $\lambda_n$, precisely the process which, if started at $\{\infty\}$, has $q_{\bullet}(t)$ of Algorithm \ref{alg:simulating_WF} as time-marginal probabilities. 
 \end{rmk}
\section{Subordinated Wright-Fisher priors}\label{sec time change}

We will now proceed with the construction of the new class of time-dependent Dirichlet priors. 
\begin{defn}
Let ${X}=({X}(t):t\geq 0)$ be a WF$(\theta,\nu^*)$ diffusion process and $C=(C(t):t\geq 0)$ a non-decreasing Feller process with values in $[0,\infty]$, starting at $0$, independent of ${X}$. Define $\widetilde{ {X}}={X}\circ C$ via $\widetilde{ {X}}(t)= {X}(C(t))$. We call the process $\widetilde {{X}}$ a \emph{subordinated Wright-Fisher (sWF) process with random clock $C$, sWF$(\theta, C, \nu^*)$}. 
\end{defn}
The original WF($\theta,\nu^*$) diffusion is the special case where $C(t)\equiv t$ i.e. WF($\theta,\nu^*$)=sWF($\theta,{\rm Id}, \nu^*$). We will consider two large families of subordinated WF priors: (i) WF time-changed by a {\em L\'evy subordinator}; (ii) WF time-changed by the {\em inverse process} of a subordinator. Properties of sWF priors induced by compositions of type (ii) with (i) will follow directly.
Type (i) is the sole type of time-change that preserves the Markov property and introduce jumps. 
We will begin by describing the two types of clock (Section \ref{sec sub time change} and \ref{sec sub inv time change}). Then we will continue with a description of the transition function and memory of sWF (Section \ref{sec time-marginal}). Duality will be established in Section \ref{sec swf dual}. The sWF resolvent is derived explicitly in the Appendix.
\subsection{Time-change by subordinators}\label{sec sub time change}

A subordinator is an almost-surely non-decreasing process $S=(S(t):t\geq 0)$ starting at $0$, with stationary and independent increments. For a comprehensive treatment of subordinators see e.g. Chapter 6 in \cite{kenlevy}. 
Subordinators are a subset of the class of L\'evy processes, so their Laplace transforms admit a known  L\'evy-Khintchine representation (see e.g. \cite{kenlevy}, Theorem 30.1): for every $t$, $\E[e^{-\lambda S(t)}]=e^{-t\psi(\lambda)},$
where $\psi$ is in the form, for all $\lambda\geq 0$,\begin{align}
    \label{eqn sub laplace exponent}
    \psi(\lambda)&=\beta\lambda +\int_{(0,\infty)}{(1-e^{-\lambda s})\pi(\dif s)}
\end{align} where $\beta\geq 0$ is known as the drift and $\pi$ the L\'evy measure of $S$, satisfying 
\begin{align}
    \label{eq:sublevy}
\pi[(-\infty,0)]=0,
\ \ \textit{and}
\ \ \int_{0}^{\infty}{(s \wedge 1 )\pi(\dif s)}<\infty.
\end{align}
Notable examples of subordinators include:

\begin{itemize}
\item[(a)]
Poisson process: $\pi=c\dot \delta_1(\cdot)$ and $\psi(\lambda)=c(1-e^{-\lambda})$.
\item[(b)]
$\alpha$-stable process: $\pi(\dif s)= s^{-1-\alpha}\dif s$ and $\psi(\lambda) = \lambda^\alpha$ for $\alpha\in (0,1)$.
\item[(c)]
Gamma process: $\pi(\dif s)= a s^{-1}e^{-bs}\dif s$ and  $\psi(\lambda) = a\log(1+\frac{\lambda}{b})$ for $a,b>0$.\item[(d)] Inverse-Gaussian process: $\pi(\dif s)= (2\pi s^3)^{-{1/2}}\exp\{-\gamma^2 s/2\}$ and
 $\psi(\lambda)=\delta (\sqrt{2\lambda+\gamma ^2}-\gamma)$, for $\delta,\gamma\geq 0$. See Example 1.3.21 in \cite{applebaum2009levy} and \cite{Feller1971}.
\end{itemize}
If ${\cal L}$ is the generator of a WF process $X$ with transition function $p_t(x,\cdot)$, then the L\'evy pair $(\beta,\pi)$ determines the generator $\wt{\cal L}$ for the sWF process $\widetilde{ {X}}= {X}\circ S$ as 
 \begin{align}
  \label{eqn sWF gen 2}
    \widetilde{{\cal}L}f({x})=\beta {\cal L} f({x}) + \int_{(0,\infty)}{\big(\E[f({X}(s)\vert {X}(0)={x}] - f({x})\big)\pi(\dif s)},
\end{align} for any $f$ in the domain of $L$,
(see Theorem 32.1 and Remark 32.4 in \cite{kenlevy} for details).
The integral term is responsible for the occurrence of jumps in the trajectories of $\widetilde{{X}}$, whose intensity and sizes are governed by the L\'evy measure $\pi$. The drift $\beta$, on the other hand, scales the speed of the diffusive component ${\cal L}$ of the law of $\wt{X}$.
From \eqref{eqn sWF gen 2} it is possible to see that the degenerate case $(\beta,0)$ corresponds to an ordinary WF diffusion after a deterministic, linear time-change $t\mapsto \beta t$, the only instance of time-change which preserves, at the same time, both continuity and Markov property.

\subsection{Time-change by inverse subordinators}\label{sec sub inv time change}

Let $S=S(t)$ be a subordinator with characteristic pair $(\beta,\pi)$. The inverse process of $S$ is the process $R=(R(t):t\geq 0)$ defined by
$$R(t):=\inf\{u\geq 0: S(u)> t\}.$$
Under some regularity conditions on the L\'evy measure $\pi$ (for simplicity, we may assume $\pi(0,\infty)=\infty$), the distribution of $R(t)$ has a density given by
\begin{align*}
G_t(u)=\frac{d}{du}\int_{-\infty}^t \mu^s(u)\dif s,
\end{align*}
where $\mu^t:=\Prb(S(t)\in \dif u)$. Let $\bar\pi(s):=\int_s^\infty \pi(\dif u).$
The Dzerbayshan-Caputo derivative is defined by the convolutional integro-differential operator
\begin{align}
    \label{eq frac cond exp evo}
   I^{(\beta,\pi)}_t f(t):= \beta f{'}(t)+\int_0^t f{'}(t)\ \bar\pi( t-s)\dif s.
\end{align} 
generalised fractional dynamics are proposed in \cite{Kol09}. A modern treatment is in \cite{Toaldo2015}. Let $\widetilde{ {X}}={{X}}\circ R$ be a sWF$(|{\theta}|)$ prior with clock $R$. The non-Markovian process $\widetilde{ {X}}$ has well-understood dynamics:  the conditional expectation operator $\widetilde T_tf({x}):=\mathbb{E}[f(\widetilde{{X}}(t))|\widetilde{X}(0)={x}]$ is the solution to:
\begin{align}
 \label{gencaputo}
&I^{(\beta,\pi)}_t u(t) ={\cal L}\ u(t ), \ \ \ \  u(0)=f,\end{align}
where ${\cal L}$ is the Wright-Fisher diffusion generator \eqref{WF gen}.
The WF diffusion is re-obtained as the extreme case when $\pi(0,\infty)=0$ (pure constant drift), in which case $I^{\beta, 0}_t=\beta \dif/\dif t$ and \eqref{gencaputo} becomes a classical backwards Kolmogorov equation. A notable non-Markovian case is given when $\beta=0$ and $R$ is the inverse process of an $\alpha$-stable subordinator with Laplace exponent $\psi(\lambda)=\lambda^\alpha$, for $\alpha\in(0,1)$. Then $\bar\pi(s)=s^{-\alpha}/\Gamma(1-\alpha)$ and the operator $I^{(\beta,\pi)}$ reduces to the Caputo fractional derivative (\cite{Saichev98}, Appendix A)
\begin{align}
    I^{(0,\alpha)}_tf(t)=\frac{1}{\Gamma(1-\alpha)}\int_0^t\frac{f^{'}(s)}{(t-s)^{\alpha}}\dif s =\frac{\dif^{\alpha}}{\dif t^\alpha} f(t).
\end{align}
In this case, \eqref{gencaputo} describes a sub-diffusive, time-fractional (in the Caputo sense) dynamics (see \cite{meer2012, Kol09, LEONENKO2013}). 

\subsection{Distributional properties}\label{sec time-marginal}
We are now ready to derive key properties of subordinated WF priors. Let $\widetilde{ {X}}=(\widetilde {{X}}(t):t\geq 0)\sim$\ sWF$(\theta, C,\nu^*)$. For any $t\geq 0$ denote with $G_t$ the law of $C(t)$ and with $p_t({x},\dif {u})$ the transition function of the WF diffusion (corresponding to Algorithm \ref{alg:simulating_WF}). 
Then $\widetilde{{X}}(0)={X}(0)$ almost surely and, for every $t$, the conditional distribution of $\widetilde{{X}}(t)$, given its starting state $\widetilde{{X}}(0)= x$, 
can be derived from
\begin{align}
\label{eq:sub_sg}
\widetilde T_t f({x})=\int_0^\infty \E[f(  X(u))\mid  X(0)= x]G_t(\dif u)=\int_0^\infty\int_{\Delta_K} f(w)p_u( x,\dif w)G_t(\dif u),
\end{align}
for any bounded measurable $f$, so that
\begin{align}
    \label{thm general sub transition function}
  \widetilde{p}_t( x,\dif w)=\E[p_{C(t)}(x,\dif w)]~\forall~ x,w\in\Delta_K,~t\geq 0,
\end{align}
where the form on the right-hand side follows from an exchange of order of integration, allowed by the smoothness and Lebesgue-absolute continuity of ${p}_t$, the Feller property and the independence of $X$ and $C$. 
\subsubsection{Time-marginal distributions}
The first consequence of \eqref{thm general sub transition function} is that sWF processes are indeed time-dependent, time-reversible Dirichlet priors, by construction. The proof of the following is straightforward (apply Fubini to \eqref{eq:sub_sg}, using independence between $C$ and $\widetilde{{X}}$, and the result follows from stationarity/reversibility of ${\cal D}_{|{\theta}|}$ for the WF diffusion ${X}$).

\begin{prop}
  $\widetilde{{X}}$ is Dirichlet-stationary and Dirichlet-reversible.
\end{prop}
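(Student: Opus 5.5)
The plan is to work directly from the mixture representation \eqref{thm general sub transition function}, $\widetilde p_t(x,\dif w)=\E[p_{C(t)}(x,\dif w)]=\int_0^\infty p_u(x,\dif w)\,G_t(\dif u)$. Both stationarity and reversibility then reduce to the corresponding facts for the WF kernel $p_u$, established in Section \ref{sec wf properties}. Those facts hold for every deterministic $u\geq 0$, so they survive averaging over $u$ against $G_t$. This averaging is legitimate because $C$ is independent of $X$, which means $G_t$ does not depend on the starting point $x$. (Here the Dirichlet law is $\mathcal{D}_\theta$; the paper also writes it as ${\cal D}_{|\theta|}$.)

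For stationarity, fix $t\geq 0$ and a measurable $A\subseteq\Delta_K$, and compute $\int_{\Delta_K}\mathcal{D}_\theta(\dif x)\,\widetilde p_t(x,A)$. By Tonelli's theorem, which applies since everything is nonnegative and $(u,x)\mapsto p_u(x,A)$ is jointly measurable, this equals $\int_0^\infty\big[\int_{\Delta_K}\mathcal{D}_\theta(\dif x)\,p_u(x,A)\big]G_t(\dif u)$. The inner bracket equals $\mathcal{D}_\theta(A)$ by WF stationarity, for every $u$, including $u=0$ where $p_0(x,\cdot)=\delta_x$. If $C(t)$ can take the value $\infty$, I will set $p_\infty(x,\cdot):=\mathcal{D}_\theta$, the ergodic limit of the WF diffusion, and the identity still holds. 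Integrating against the probability measure $G_t$ gives $\mathcal{D}_\theta(A)$. Hence if $\widetilde X(0)=X(0)\sim\mathcal{D}_\theta$, then $\widetilde X(t)\sim\mathcal{D}_\theta$ for all $t$.

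For reversibility, I will argue the same way with two sets $A,B$. The quantity $\int_A\mathcal{D}_\theta(\dif x)\,\widetilde p_t(x,B)$ equals, after applying Tonelli, $\int_0^\infty\int_A\mathcal{D}_\theta(\dif x)\,p_u(x,B)\,G_t(\dif u)$. By WF detailed balance, $\mathcal{D}_\theta(\dif x)p_u(x,\dif w)=\mathcal{D}_\theta(\dif w)p_u(w,\dif x)$, the inner integral is symmetric in $(A,B)$ for each $u$. It therefore equals $\int_B\mathcal{D}_\theta(\dif w)\,p_u(w,A)$, and integrating back over $G_t$ gives $\int_B\mathcal{D}_\theta(\dif w)\,\widetilde p_t(w,A)$.

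The main subtlety is what "reversible" should mean when $C$ is an inverse subordinator, since then $\widetilde X$ is not Markov. There I will state the result as two-time detailed balance (symmetry of the bivariate law of $(\widetilde X(0),\widetilde X(t))$ under $\mathcal{D}_\theta$), which is exactly what the computation above proves. For a L\'evy subordinator clock $\widetilde X$ is Markov with semigroup $\widetilde T_t$. In that case the pairwise symmetry for each $t$ upgrades to reversibility of the whole process, by applying it to the factors of the finite-dimensional distributions. Equivalently, each $\widetilde T_t$ is self-adjoint in $L^2(\mathcal{D}_\theta)$, because it is a $G_t$-mixture of the self-adjoint WF operators $T_u$.
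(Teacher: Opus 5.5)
Your proposal is correct and follows the paper's route. Like the paper, you apply Fubini/Tonelli to the mixture representation $\widetilde p_t(x,\dif w)=\int_0^\infty p_u(x,\dif w)\,G_t(\dif u)$, use the independence of $C$ and $X$, and invoke WF stationarity and detailed balance at each fixed operational time $u$. Your additions are sensible refinements that the paper leaves implicit: the convention for $C(t)=\infty$, and reading ``reversible'' as two-time detailed balance when the clock is an inverse subordinator (there $\widetilde X$ is not Markov and full time-reversal invariance of the finite-dimensional laws can fail).
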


The second important consequence of \eqref{thm general sub transition function} is that the conditional distributions of a sWF model admit expansions very much similar to those of WF diffusions. Consider the Laplace transform of the clock process at time $t$, i.e.
$\Phi_t(\lambda)=\mathbb{E}[e^{-\lambda C(t)}].$
 We will use the notation $(a)_n:=\Gamma(a+n)/\Gamma(n)$ for $a>0,n\in\Z_+$ is the Pochammer symbol for generalised raising factorials. 
Define 
$Q_n({x},{u}):=\sum_{|\boldsymbol n|=n} Q_{\boldsymbol{n}}({x})Q_{\boldsymbol{n}}({u}),$ where $Q_{\boldsymbol{n}}$ are multivariate polynomial, 
orthogonal with respect to the weight measure $\mathcal{D}_{{\theta}}$, i.e. \cite{Griffiths_2013}
 $$\int_{\Delta_K} Q_n({x},{u})Q_m({u},{w}) \mathcal{D}_{{\theta}}(\dif {u})=Q_n(x,w)\delta_{{n}{m}},~\forall  x,  w\in\Delta_K,\ n,m\in\Z_+.$$
 Recall the notation $\lambda_n=\frac{n(n+|\theta|-1)}{2},~n=0,1,\ldots.$
\begin{prop}
\label{prop tf sub} For any given time $t\geq 0$ and ${{x}}\in \Delta_K$, the conditional distribution  $\widetilde{{X}}(t),$ given $\widetilde{{X}}(0)={{x}}$ admits the equivalent representations
\begin{align}
     \label{eqn sub tf polydual}
    \widetilde{p}_t({{x}},\dif {u})=\sum_{n=0}^\infty \Phi_t(\lambda_n)Q_n({x},{u})\mathcal{D}_{{\theta}}(\dif u)
\end{align}
and
\begin{align}
    \label{eqn tf subdual}
    \widetilde{p}_t({{x}},\dif u)=
    \sum_{m=0}^{\infty}{\widetilde{q}_m^{\vert{\theta}\vert}(t)\sum_{\vert \boldsymbol{l}\vert\leq m}{\mathcal{M}(\boldsymbol{l};m,{{x}})\mathcal{D}_{{\theta}+\boldsymbol{l}}(\dif u)}},
\end{align}
where 
\begin{align}
    \label{sub_bcp} \widetilde{q}_m^{|{\theta}|}(t):=\sum_{j=m}^{\infty}(-1)^{j-m}\Phi_t(\lambda_j)a_{j,m}^{(|{\theta}|)}\geq 0,\ \ \ m\in\Z_+,
\end{align}
and $$a_{j,m}^{(|{\theta}|)}:=\frac{(2j+|{\theta}|-1)(m+ |{\theta}|)_{(j-1)}}{m!(j-m)!}.$$ \end{prop}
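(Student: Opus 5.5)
The plan is to start from the subordination identity \eqref{thm general sub transition function}, $\widetilde p_t(x,\dif u)=\E[p_{C(t)}(x,\dif u)]$. We then plug in the two classical series representations of the WF transition function $p_s$. First, the spectral expansion (see \cite{Griffiths_2013}):
\begin{align*}
p_s(x,\dif u)=\sum_{n\ge0}e^{-\lambda_n s}Q_n(x,u)\mathcal{D}_\theta(\dif u).
\end{align*}
Second, the mixture of posterior Dirichlet laws underlying Algorithm \ref{alg:simulating_WF}:
\begin{align*}
p_s(x,\dif u)=\sum_{m\ge0}q_m^{|\theta|}(s)\sum_{|\bs l|=m}\mathcal{M}(\bs l;m,x)\mathcal{D}_{\theta+\bs l}(\dif u),
\end{align*}
where
\begin{align*}
q_m^{|\theta|}(s)=\sum_{j\ge m}(-1)^{j-m}e^{-\lambda_j s}a_{j,m}^{(|\theta|)}
\end{align*}
is the Griffiths/Tavar\'e formula for the marginal of the pure-death process $|Z|$ started at $\infty$.

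Taking expectations over $C(t)$, which is independent of $X$, and exchanging $\E$ with the sums gives both results at once. For the first, $\E[e^{-\lambda_n C(t)}]=\Phi_t(\lambda_n)$, which yields \eqref{eqn sub tf polydual}. For the second, we define $\widetilde q_m^{|\theta|}(t):=\E[q_m^{|\theta|}(C(t))]$. This is automatically nonnegative, since it is an average of probabilities. It also sums to one by Tonelli, so it is a genuine probability mass function, namely the law of $|Z(C(t))|$. Expanding $q_m^{|\theta|}$ termwise and interchanging again gives the alternating series \eqref{sub_bcp}, and hence \eqref{eqn tf subdual}. On the event $\{C(t)=0\}$ we interpret $p_0(x,\cdot)=\delta_x$ and treat it separately, as discussed below.

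The main obstacle is justifying the interchanges of expectation and summation, because the series in $j$ is alternating and is not absolutely convergent near $s=0$. For the Dirichlet-mixture form there is no issue: all terms are nonnegative, so Tonelli applies to $\sum_m q_m(C(t))(\cdots)$ directly. The delicate step is to pass from $\E[q_m(C(t))]$ to the alternating series in $\Phi_t(\lambda_j)$.

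On $\{C(t)\ge\varepsilon\}$ the series $\sum_j e^{-\lambda_j s}a_{j,m}$ converges absolutely and uniformly in $s\ge\varepsilon$. This holds because $a_{j,m}$ grows only polynomially, like $j^{m+|\theta|}$ up to constants, while $e^{-\lambda_j\varepsilon}$ decays like $e^{-cj^2}$. Dominated convergence therefore applies on that event. The remaining region $C(t)<\varepsilon$ requires an argument, which I would try to make by letting $\varepsilon\downarrow0$. On the side of the definition, $\E[q_m(C(t));C(t)<\varepsilon]\to\E[q_m(0);C(t)=0]$. On the side of the series, I would assume $\Phi_t(\lambda_j)\to0$ fast enough for the series in \eqref{sub_bcp} to converge. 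This is the case, for instance, when $\Prb(C(t)=0)=0$ and $\Phi_t$ decays suitably, e.g. for subordinators where $\Phi_t(\lambda_j)=e^{-t\psi(\lambda_j)}$. I would state that convergence of \eqref{sub_bcp} is understood as convergence of the partial sums, obtained as the limit of the truncated expectations.

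For the polynomial expansion \eqref{eqn sub tf polydual}, I would interpret the identity in $L^2(\mathcal{D}_\theta)$, or weakly against bounded test functions. The interchange is then justified by $|\Phi_t(\lambda_n)|\le1$ together with the $L^2$ convergence of the kernel expansion. Both representations are equivalent because they are expectations of two equal expressions for $p_{C(t)}$.
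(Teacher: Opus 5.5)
Your proposal is essentially the paper's proof: substitute $C(t)$ for $t$ in the two classical WF expansions and exchange the expectation with the series, which the paper justifies only by an appeal to smoothness, so your extra care near $C(t)=0$ goes beyond the paper and is warranted (a Poisson clock, for instance, puts an atom $\Prb(C(t)=0)\,\delta_x$ into $\widetilde p_t(x,\cdot)$, which the absolutely continuous mixture \eqref{eqn tf subdual} cannot represent, and then your weights $\E[q^{|\theta|}_m(C(t))]$ sum to less than one). To close the step you left open, assume $\sum_{j\ge m}a^{(|\theta|)}_{j,m}\Phi_t(\lambda_j)<\infty$: by Tonelli, $\sum_{j\ge m}a^{(|\theta|)}_{j,m}e^{-\lambda_j C(t)}$ is then an integrable bound on every partial sum, so dominated convergence yields \eqref{sub_bcp} directly without $\varepsilon$-splitting, whereas for \eqref{eqn sub tf polydual} the bound $|\Phi_t(\lambda_n)|\le 1$ alone does not give $L^2(\mathcal{D}_{\theta})$ convergence (since $\sum_n Q_n(x,x)=\infty$), so read that identity weakly against polynomial test functions, for which the series is finite.
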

\begin{rmk}(i)  The known expansions for the  ordinary WF transition function \cite{griffiths06,Griffiths10}, are re-obtained as the special case 
$\Phi_{t}(\lambda_n)=e^{-\lambda_n t}.$
In this case, the coefficients $\wt q_m(t)$ in \eqref{sub_bcp} give the form of the probabilities $q_\bullet(t)$ appearing in Algorithm \ref{alg:simulating_WF}. The proof for general sWF follows simply by replacing $t$ (deterministic) with $C(t)$ (random) and taking the expectation. \\
(ii) For $K=2$ (Beta stationary), if $C$ chosen to be the inverse of an $\alpha$-stable subordinator, the process $\widetilde{X}\circ C$ has Mittag-Leffler eigenvalues $E_\alpha(-\lambda_n t^{\alpha})$ as in \eqref{mittagleffler} and $\wt p_t$ becomes the transition function of (a shifted version of) the so-called fractional Jacobi diffusion, a member of the Fractional Pearson family which has been studied by Leonenko \cite{LEONENKO2013}. Thus, modulo a change of variable $J(t):=2\widetilde X(t)-1$, formula \eqref{eqn tf subdual} gives a novel alternative representation of fractional Jacobi diffusions. Furthermore, for general $K>2$, sWF provide a $K$-dimensional generalisation of Leonenko's fractional Jacobi diffusions, which, to our knowledge, is new.
\end{rmk}

\subsubsection{Resolvent}
\label{sec swf resolvent}
The finite-dimensional distributions $\wt p_t$ of a sWF process 
$\widetilde {{X}}={X}\circ C$ can be characterised via the corresponding {\em resolvent measure}: 
    $$
   {\cal R}_\lambda({x},\dif u)=  \int_0^\infty e^{-\lambda t } \widetilde{p}_t({{x}},\dif u)\ \dif t.
    $$
The proof of the next Corolloary follows by Proposition \ref{prop tf sub}, and by exchanging integration with summation, which is allowed by the smoothness of $p_t,$ $Q_n$ and $e^{\lambda_n t}$.
\begin{cor}
    The resolvent measure of a sWF$({\theta})$ process with clock $C$ takes the two equivalent forms
\begin{equation} \label{eqn res m1}
     {\cal R}_\lambda({x},\dif u) =
      \sum_{n=0}^\infty \Phi(\lambda, \lambda_n)
      Q_n({x},u)
      {\cal D}_{{\theta}}(\dif u),
     \end{equation}
    and 
    \begin{align}
    \label{eqn res m2}
    {\cal R}_\lambda({x},\dif u) &=\sum_{m=0}^\infty \chi_m(\lambda)\ \sum_{\vert \boldsymbol{l}\vert\leq m}{\mathcal{M}(\boldsymbol{l};m,{{x}})\mathcal{D}_{{\theta}+\boldsymbol{l}}(\dif u)}, 
 \end{align}
   where ${\Phi}(\lambda, \lambda_n)=\mathbb{E}\left[e^{-\lambda t}\Phi_t(\lambda_n)\right]$ and
   \begin{align}
       \label{eqn sub dual resolvent}
\chi_m(\lambda)&:=\sum_{j=m}^{\infty} {\Phi}_n(\lambda,\lambda_j)a_{j,m}^{|{\theta}|}.\end{align}
\end{cor}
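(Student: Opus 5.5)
The plan is to obtain both expressions for ${\cal R}_\lambda$ by integrating the two series of Proposition \ref{prop tf sub} term by term against $e^{-\lambda t}\dif t$ on $[0,\infty)$, for fixed $\lambda>0$. For \eqref{eqn res m1}, substitute \eqref{eqn sub tf polydual} into the definition of ${\cal R}_\lambda$. The coefficient of $Q_n(x,u){\cal D}_\theta(\dif u)$ is then $\int_0^\infty e^{-\lambda t}\Phi_t(\lambda_n)\dif t$. I read the notation $\Phi(\lambda,\lambda_n)=\E[e^{-\lambda t}\Phi_t(\lambda_n)]$ as this Laplace transform in $t$, equivalently $\lambda^{-1}\E[\Phi_{\tau}(\lambda_n)]$ with $\tau\sim$Exp$(\lambda)$ independent of $C$. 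Similarly, substituting \eqref{eqn tf subdual} gives coefficients $\chi_m(\lambda)=\int_0^\infty e^{-\lambda t}\wt q_m^{|\theta|}(t)\dif t$. Inserting \eqref{sub_bcp} and exchanging the $j$-sum with the $t$-integral then yields \eqref{eqn sub dual resolvent}.

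For \eqref{eqn res m2} the interchange is easy. All terms are nonnegative: $\wt q_m(t)\ge0$ and the Dirichlet/multinomial kernels are nonnegative. Tonelli therefore applies directly, and the total mass is $\sum_m\chi_m(\lambda)=\int_0^\infty e^{-\lambda t}\dif t=1/\lambda$, because $\wt q_\bullet(t)$ is a probability mass function for each $t$. The passage from $\int e^{-\lambda t}\wt q_m(t)\dif t$ to the alternating series $\sum_{j\ge m}(-1)^{j-m}\Phi(\lambda,\lambda_j)a_{j,m}$ needs dominated convergence. For this I bound $|\Phi_t(\lambda_j)|a_{j,m}$: since $C$ is non-decreasing and independent of $X$, $\Phi_t(\lambda_j)=\E[e^{-\lambda_jC(t)}]$.

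The main obstacle is the early-time regime. Near $t=0$, $C(t)$ is small and $\Phi_t(\lambda_j)\to1$, while $a_{j,m}$ grows polynomially in $j$, so the series is not absolutely summable uniformly in $t$. This mirrors the classical WF case, where $q_m(t)$ is only defined via an alternating series for $t>0$. I would handle it in three steps. First, split $\int_0^\infty=\int_0^\epsilon+\int_\epsilon^\infty$. Second, on $[\epsilon,\infty)$ use $\Phi_t(\lambda_j)\le\Phi_\epsilon(\lambda_j)$, together with a decay assumption $\sum_j a_{j,m}\Phi_\epsilon(\lambda_j)<\infty$, to justify Fubini. For subordinators this assumption holds if $\psi(\lambda_j)$ grows like $\log j$ or faster; for inverse stable clocks it follows from Mittag-Leffler asymptotics. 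Third, let $\epsilon\to0$, noting that the $[0,\epsilon]$ contribution to the nonnegative series vanishes because its mass is at most $\epsilon$.

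If such uniform control fails for a particular clock, I would fall back on interpreting \eqref{eqn sub dual resolvent} as a convergent series in the Abel sense. Equivalently, I would verify \eqref{eqn res m2} by checking that both sides have the same pairing with every polynomial $Q_{\bs n}$. That pairing reduces, via the orthogonality relation and the identity of \eqref{eqn sub tf polydual} with \eqref{eqn tf subdual}, to the already-justified \eqref{eqn res m1}. Polynomials are dense in $C(\Delta_K)$, so this identifies the measures.

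For \eqref{eqn res m1} itself, I would pair both sides with a test function $f$. Using the $L^2({\cal D}_\theta)$ expansion and $|\Phi_t(\lambda_n)|\le1$, the integrals $\int e^{-\lambda t}\dif t$ converge absolutely for each fixed $n$, and the $n$-series converges in $L^2$, which justifies the exchange.
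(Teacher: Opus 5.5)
Your route is the paper's: Laplace-transform in $t$ the two expansions of Proposition~\ref{prop tf sub}, then interchange $\int_0^\infty e^{-\lambda t}\dif t$ with the sums. The paper simply asserts this interchange ``by smoothness''. Two parts of your argument are sound: \eqref{eqn res m1} after pairing with $f\in L^2({\cal D}_\theta)$, and \eqref{eqn res m2} with $\chi_m(\lambda)=\int_0^\infty e^{-\lambda t}\wt q_m(t)\dif t$ via Tonelli. You are also right that the term-by-term computation carries the sign $(-1)^{j-m}$, which the displayed \eqref{eqn sub dual resolvent} omits.

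The gap is the passage from this integral to the series. Your third step only gives $\chi_m(\lambda)=\lim_{\epsilon\downarrow0}\sum_{j\ge m}(-1)^{j-m}a_{j,m}\int_\epsilon^\infty e^{-\lambda t}\Phi_t(\lambda_j)\dif t$. The smallness of $\int_0^\epsilon e^{-\lambda t}\wt q_m(t)\dif t$ does not let you move $\lim_{\epsilon\downarrow0}$ inside the $j$-sum; that is the original problem again. No sharper estimate can fix this, because the limiting series diverges. Indeed $a^{(|\theta|)}_{j,m}\asymp j^{2m+|\theta|-1}$, whereas $\Phi(\lambda,\lambda_j)\gtrsim j^{-2}$ for every clock in the paper. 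For a subordinator, $\psi(\mu)=O(\mu)$ and $\Phi(\lambda,\lambda_j)=1/(\lambda+\psi(\lambda_j))$. For an inverse subordinator, $\Phi(\lambda,\lambda_j)=\psi(\lambda)/(\lambda(\lambda_j+\psi(\lambda)))$. Already for $C(t)=t$ it is $1/(\lambda+\lambda_j)$. Hence for $m\ge2$ the terms $a_{j,m}\Phi(\lambda,\lambda_j)$ do not even tend to zero, and the same objection applies to the paper's one-line argument.

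Two of your supporting claims are also incorrect. First, Mittag-Leffler functions decay only like $E_\alpha(-z)\sim z^{-1}/\Gamma(1-\alpha)$. So for inverse-stable clocks $\Phi_\epsilon(\lambda_j)\asymp j^{-2}$, and $\sum_j a_{j,m}\Phi_\epsilon(\lambda_j)=\infty$ for every $\epsilon>0$ and $m\ge1$. There even the series \eqref{sub_bcp} diverges for $m\ge2$, and $\wt q_m(t)$ must be read as $\E[q_m(C(t))]$. Second, for subordinators $\psi(\lambda_j)\asymp\log j$ is not enough: for the Gamma clock $\Phi_\epsilon(\lambda_j)\asymp j^{-2a\epsilon}$, which is not summable against $a_{j,m}$ for small $\epsilon$. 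You need $\psi(\lambda_j)/\log j\to\infty$.

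The polynomial-pairing fallback does not help either. It only re-proves \eqref{eqn res m2} for $\chi_m:=\int_0^\infty e^{-\lambda t}\wt q_m(t)\dif t$, which Tonelli already gave you, and says nothing about the series.

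What is provable is a regularised form of \eqref{eqn sub dual resolvent}. Let $U_\lambda(\dif s):=\int_0^\infty e^{-\lambda t}\Prb(C(t)\in\dif s)\dif t$. This is a measure of mass $1/\lambda$ with $\int e^{-\mu s}U_\lambda(\dif s)=\Phi(\lambda,\mu)$, and $\chi_m(\lambda)=\int q_m(s)\,U_\lambda(\dif s)$, where $q_m$ are the WF dual probabilities. Cutting off \emph{operational} rather than physical time gives $\chi_m(\lambda)=\lim_{\delta\downarrow0}\sum_{j\ge m}(-1)^{j-m}a_{j,m}\int_{[\delta,\infty)}e^{-\lambda_j s}\,U_\lambda(\dif s)$ for every clock. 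This holds because $0\le q_m\le1$, $q_m(0)=0$ and $\sum_j a_{j,m}e^{-\lambda_j\delta}<\infty$. Your physical-time cut-off yields the analogous statement only when $\sum_j a_{j,m}\Phi_\epsilon(\lambda_j)<\infty$ for all small $\epsilon$, e.g.\ $\beta>0$ or stable subordinators.
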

In particular, if $C=S$, a $\psi$- subordinator, then \begin{align}\Psi_n(\lambda,\lambda_n)=\int_0^\infty e^{-\lambda t}e^{-\psi(\lambda_n)t}\dif t=\frac{1}{\lambda+\psi(\lambda_n)};\end{align}
 if $C=R$, the inverse of $S$, then (See
 \eqref{eqn doulelap}):
\begin{align}
\Phi(\lambda,\lambda_n)=\frac{\psi(\lambda)}{\lambda(\lambda_n+\psi(\lambda))}.
    \end{align}
    
\subsubsection{Memory properties}\label{sec memory}

The type of time-memory of a sWF process is described entirely by the time-dependence of the Laplace transform $t\mapsto \Phi_t$ of the clock $C$. Indeed, the expansion \eqref{eqn sub tf polydual} shows that $\Phi$ determines all the eigenvalues of the conditional expectation operator $(\widetilde T_t f)$: Consider, for any $u\in\Delta_K,$ the orthogonal functions ${x}\mapsto Q_k({x},u)$. Indeed, by orthogonality and from \eqref{thm general sub transition function},
    \begin{align}
    \label{fracsg}
       \widetilde{T}_t Q_k(x,u)&=\sum_n\Phi_t(\lambda_n)\int_{\Delta_k}Q_n(x,w)Q_k(w,u){\cal D}_{{\theta}}(\dif w)\notag\\&=\Phi_t(\lambda_k)Q_k(x,u).
    \end{align}
    In fact, for any $f\in L^2(\Delta_K,{\cal D}_{{\theta}}),$ we can write  
\begin{align*}
    \widetilde{T}_t f(x)=\sum_n\Phi_t(\lambda_n)\widehat{ f}_n({x}),\ \ \ \ \ \ \text{where} \ \ \widehat f_n({x}):=\int_{\Delta_k}f(u)Q_n(x,u){\cal D}_{{\theta}}(\dif u).\end{align*}
     The function $\Phi_t(\lambda)$ is a completely monotone function with $\Phi_t(0)=\Phi_0(\lambda)=1,$ in particular it is decreasing in $\lambda$. Therefore, the largest non-trivial eigenvalue is $\Phi_t(\lambda_1)=\Phi_t(|\theta|)$, which represents the rate of convergence to the stationary distribution $\mathcal{D}_{{\theta}}({x})$. 
       If $C=S$ for a subordinator $S$ with Laplace exponent $\psi=\psi(\beta,\pi)$, the eigenvalues of $\wt{T}_t$ will be, for every $n,$ of the form $\Phi_t(\lambda_n)=e^{-t\psi(\lambda_n)}$.
      If $C=R$ (inverse of $S$), the memory becomes heavy-tailed and 
      $\Phi_t$ can be described as the solution of a generalised Volterra-type kinetic equation as follows.

\begin{prop} 
\label{prop frac conv eqn}
    If $R$ is the inverse of a $\psi(\beta,\pi)$-subordinator, the Laplace transform Laplace transform $\Phi_t(\lambda)=\mathbb{E}[e^{-\lambda R(t)}],$ as a function of $t$, is solution to the equation: for every $\lambda\geq 0,$
    \begin{align}
    \label{eqn gen kin}
    \Phi_t(\lambda)-\Phi_0(\lambda)=-\lambda\int_0^t \kappa(t-s)\Phi_s(\lambda) \dif s,\ \ \ \textit{and}\ \ \ \Phi_0(\lambda)=1,
    \end{align}
    where $\kappa(t):=\int_0^\infty \mu^s(t)\dif s$ is the {\em potential density} of the underlying $\psi$ subordinator.
    \\
    Equivalently $t\mapsto\Phi_t$ solves, for any $\lambda$, 
\begin{align}
\label{eqn phi fractional}
    I^{(\beta,\pi)}_t\Phi_t(\lambda)=-\lambda\Phi_t(\lambda),\ \ \phi_0(\lambda)=1.
\end{align}
   where $I^{(\beta,\pi)}_t$ is defined as in \eqref{eq frac cond exp evo}. \end{prop}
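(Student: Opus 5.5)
The natural route is through the Laplace transform in the time variable. First I will compute the double Laplace transform $\int_0^\infty e^{-st}\Phi_t(\lambda)\,\dif t$ for $s>0$. Since $R$ is the inverse of $S$, we have $\{R(t)>u\}=\{S(u)\le t\}$ (up to null sets, using strict increase of $S$ when $\pi(0,\infty)=\infty$ or $\beta>0$). For $\lambda>0$ this gives
$$\Phi_t(\lambda)=\E[e^{-\lambda R(t)}]=1-\lambda\int_0^\infty e^{-\lambda u}\,\Prb(R(t)>u)\,\dif u=1-\lambda\int_0^\infty e^{-\lambda u}\,\Prb(S(u)\le t)\,\dif u .$$
Taking the Laplace transform in $t$ and using Fubini (everything is nonnegative and bounded) gives $\int_0^\infty e^{-st}\Prb(S(u)\le t)\,\dif t=s^{-1}e^{-u\psi(s)}$. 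Hence
$$\int_0^\infty e^{-st}\Phi_t(\lambda)\,\dif t=\frac1s-\frac{\lambda}{s(\lambda+\psi(s))}=\frac{\psi(s)}{s(\lambda+\psi(s))}.$$
This is the formula quoted after the Corollary.

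Next I will show that \eqref{eqn gen kin} has exactly this Laplace transform. The potential density satisfies $\int_0^\infty e^{-st}\kappa(t)\,\dif t=\int_0^\infty \E[e^{-sS(r)}]\,\dif r=1/\psi(s)$. If the potential measure is not absolutely continuous, I interpret $\kappa$ as a measure, and the argument is unchanged. Transforming \eqref{eqn gen kin} then gives $\hat\Phi(s)-1/s=-\lambda\,\hat\Phi(s)/\psi(s)$, so $\hat\Phi(s)=\psi(s)/(s(\psi(s)+\lambda))$, which matches. The function $t\mapsto\Phi_t(\lambda)$ is bounded and right-continuous, since $R$ is continuous, so uniqueness of Laplace transforms gives \eqref{eqn gen kin} for a.e.\ $t$, and then for all $t$ by continuity. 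Alternatively, uniqueness of bounded solutions of this Volterra equation with a nonnegative locally integrable kernel (Gronwall) identifies the solution directly.

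For the equivalent form \eqref{eqn phi fractional}, I compute the Laplace transform of $I^{(\beta,\pi)}_t\Phi$. By the convolution theorem, and using $\int_0^\infty e^{-st}\bar\pi(t)\,\dif t=(\psi(s)-\beta s)/s$ (integration by parts in \eqref{eqn sub laplace exponent}), we get
$$\mathcal{L}\big[I^{(\beta,\pi)}\Phi\big](s)=\big(s\hat\Phi(s)-1\big)\Big(\beta+\frac{\psi(s)-\beta s}{s}\Big)=\frac{\psi(s)}{s}\big(s\hat\Phi(s)-1\big).$$
Note that the integrand in \eqref{eq frac cond exp evo} should read $f'(s)\bar\pi(t-s)$. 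Substituting $\hat\Phi$ yields $-\lambda\psi(s)/(s(\lambda+\psi(s)))=-\lambda\hat\Phi(s)$, so \eqref{eqn phi fractional} holds again by Laplace uniqueness. Equivalently, one can convolve \eqref{eqn phi fractional} with $\kappa$, using $\kappa*(\beta\delta+\bar\pi)=1$ (the renewal identity, whose transform is $\psi(s)/s\cdot 1/\psi(s)=1/s$), to recover \eqref{eqn gen kin}.

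The main obstacle is regularity: the Caputo-type operator needs $\Phi'$ to exist, which is not automatic for a general $\pi$. I would handle this by reading \eqref{eqn phi fractional} in the integrated (convolution) sense, justified by the renewal identity above. Where needed, I would show that $\Phi_t(\lambda)$ is absolutely continuous in $t$ from \eqref{eqn gen kin}, since a convolution of $\kappa\in L^1_{loc}$ with a bounded function is continuous.
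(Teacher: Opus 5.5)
Your proposal is correct and follows essentially the paper's route. You obtain the double Laplace transform $\psi(s)/(s(\lambda+\psi(s)))$ from the inverse relation between $R$ and $S$, match it against the transform of the Volterra equation via $\int_0^\infty e^{-st}\kappa(t)\dif t=1/\psi(s)$, and derive the Caputo-type form from $\beta+\int_0^\infty e^{-sy}\bar\pi(y)\dif y=\psi(s)/s$, i.e.\ the renewal identity $\beta\kappa+\kappa*\bar\pi=1$ that the paper uses.

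One small correction to your regularity aside: continuity of $\kappa*\Phi$ does not by itself give absolute continuity of $t\mapsto\Phi_t(\lambda)$. Use instead that this map is non-increasing, hence of bounded variation; that makes $\kappa*\Phi$, and therefore $\Phi$, absolutely continuous.
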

   
 \begin{rmk}
 \begin{itemize}
     \item[(a)]
     The exponential memory is recovered from the kinetic equation \eqref{eqn gen kin} when $k(t)\equiv 1$ and $I^{(\beta,\pi)}_t$ reduces to ordinary derivative. This solution, applied to $\lambda_n$, gives the eigenvalues of the ordinary WF semigroup; if applied to $\widetilde{\lambda}_n:={\psi}^*(\lambda_n),$ it gives the eigenvalues for $\widetilde{{X}}={X}\circ S$ for $S$ a $\psi^*$-subordinator, independent of $R$.
     \item[(b)] If $R$ is the inverse of an $\alpha$- stable subordinator with $\alpha\in(0,1)$
 the Laplace transform solving \eqref{eqn gen kin} is the Mittag-Leffler function
\begin{align}
\label{mittagleffler}
\Phi^\alpha_t(\lambda)=\sum_{n=0}^\infty\frac{(-\lambda t^\alpha)^n}{\Gamma (\alpha n+1)}=:E_{\alpha}(-\lambda t^\alpha).
\end{align}
The potential measure in this case is $\kappa_\alpha(t)=t^{\alpha-1}/\Gamma(\alpha).$
Notice that when $\alpha\to 1$, $\kappa_\alpha(t)\to 1$ and the equation \eqref{eqn gen kin} reduces back to the Markovian case.
\item[(c)]
Let $S_1$ and $S_2$ be two subordinators with Laplace exponents $\psi_1,\psi_2$, respectively, and let $R_1,R_2$ be their corresponding inverse processes. 
Then the evolution of the sWF process $\widetilde{{X}}^*={X}\circ C$ with clock $C=R_2\circ S_1$, i.e. $C(t)=R_2(S_1(t)),$ has the same structure as \eqref{eqn tf subdual}-\eqref{eqn sub tf polydual} with eigenvalues
\begin{align}\Phi_t^*(\lambda_n)= \E\left[\Phi_{1,R(t)}(\lambda_n)\right]=\Phi_{2,t}(\psi_{1}(\lambda_n)),
\label{eq mix sub}
\end{align}where $\Phi_{2,t}(\lambda)$  
solves the convolutional equation \eqref{eqn gen kin} for $\psi_2$. 
\end{itemize}
 \end{rmk}
\begin{figure}[H]
\centering
\includegraphics[width=\linewidth]{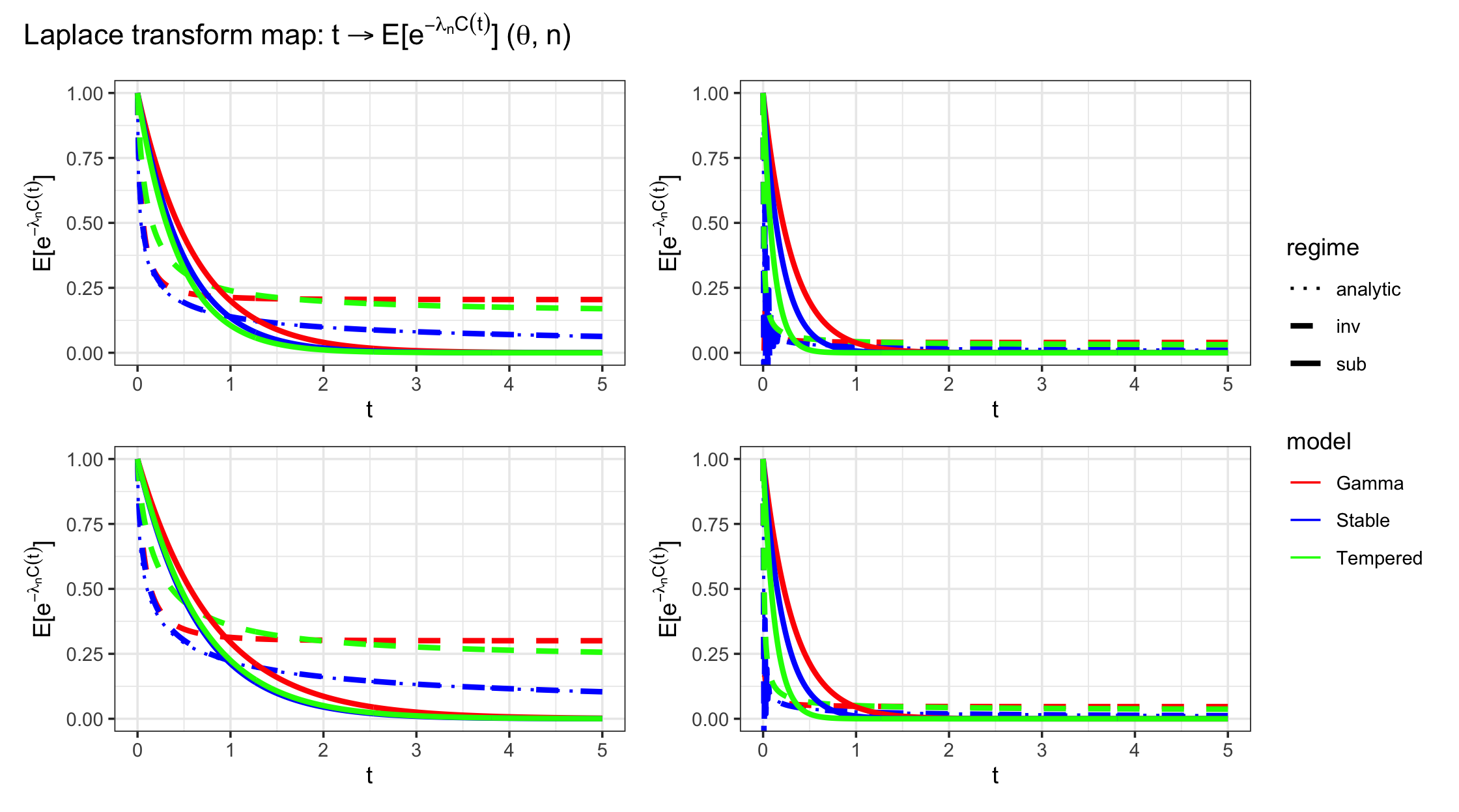}
\caption{
Laplace transform maps $t \mapsto \mathbb{E}[e^{- \lambda_n C(t)}]$ for $\alpha$-stable ($\alpha=0.5$), Gamma process ($b=1$), and tempered-stable ($\alpha=0.7$, $\lambda=0.5$), subordinators, and their inverse subordinator counterparts, computed for, clockwise from the topleft: $(\theta,n)=(1,2),(1,5),(0.2,2),(0.2,5)$. The solid lines show the analytical Laplace transforms of the subordinators, the dashed lines the numerically-computed Laplace transforms of the corresponding inverse subordinators and, for the $\alpha$-stable inverse subordinator, the dotted line corresponds to the analytical Mittag-Leffler expression $E_{\alpha}(-\lambda_n t^\alpha)$.
}

  \label{fig: Fractional Memory Comparison}
\end{figure}

Figure \ref{fig: Fractional Memory Comparison} illustrates the difference between subordinator and subordinator-inverse clocks in terms of the decay with $t$ of the sWF process's second largest eigenvalue, exponential for subordinator clocks and heavy-tailed for inverse subordinator clocks, inducing a slower convergence to $0$ in $t$ . 

Figure A-1
in the Appendix shows a wider choice of subordinator clocks and how the sWF eigenmodes decay in terms of $t,\theta$ and $n$. When considering \eqref{eqn tf subdual}, this slower decay translates to relatively higher weights on Dirichlet-Multinomial components with higher total mass $m$, when compared to the transition function of the WF diffusion, strengthening the time-correlation.

\subsection{Duality}\label{sec swf dual}
As mentioned, much of the time-dependence structure, and computability, of Wright-Fisher diffusions can be read off its dual process. A simple by key observation is that changing the time with an independent clock preserved stochastic duality.

\begin{prop}\label{prop sub dual}
Let $X$ and $Z$ be (Feller) Markov processes with state space $E$ and $F$ respectively. Assume that $X$ is dual to $Z$ with respect to some duality function $h$. Define $\widetilde{X}=X\circ C$ to be the time-change of $X$ via $C$. Then $\widetilde{X}$ is dual to $\widetilde{Z}:=Z\circ C$ with respect to the same duality function $h$.
\end{prop}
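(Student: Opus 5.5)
The plan is to condition on the clock and reduce everything to the duality of $X$ and $Z$ at deterministic times. First fix the meaning of duality. The display \eqref{eqn stochastic duality} is presumably a typo for the relation with the duality function $h$:
\[
\E\bigl[h(X(t),z)\mid X(0)=x\bigr]=\E\bigl[h(x,Z(t))\mid Z(0)=z\bigr], \qquad x\in E,\ z\in F,\ t\geq 0 .
\]
I assume this holds for every deterministic $t\geq 0$, with $h$ bounded, or at least with enough integrability that both sides are finite. In the WF application $h=g$ is bounded on $\Delta_K\times\{\bs m\}$ for each fixed $\bs m$, so this is no restriction there.

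In $\widetilde Z=Z\circ C$ the clock $C$ is understood to be a copy of the clock with the same law, independent of $Z$. Duality is a statement about one-dimensional marginals only, so only the law $G_t$ of $C(t)$ matters. The same clock path does not need to drive both processes.

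The key step is the same conditioning argument used for \eqref{eq:sub_sg}. Since $C$ is independent of $X$, conditioning on $C(t)=u$ and integrating over $G_t$ gives
\[
\E\bigl[h(\widetilde X(t),z)\mid \widetilde X(0)=x\bigr]=\int_{[0,\infty)}\E\bigl[h(X(u),z)\mid X(0)=x\bigr]\,G_t(\dif u).
\]
Here we also use $\widetilde X(0)=X(0)$, which holds because $C(0)=0$. Applying the duality of $X$ and $Z$ at the deterministic time $u$ inside the integral turns the integrand into $\E[h(x,Z(u))\mid Z(0)=z]$. The same conditioning argument, now for $Z$ and its independent clock, identifies $\int \E[h(x,Z(u))\mid Z(0)=z]\,G_t(\dif u)$ with $\E[h(x,\widetilde Z(t))\mid \widetilde Z(0)=z]$. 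This chain of equalities is the claimed duality.

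The main obstacle is the measure-theoretic justification of the conditioning identity, that is, of $\E[F(X(C(t)))]=\int \E[F(X(u))]\,G_t(\dif u)$. Two things are needed. The first is joint measurability of $(u,\omega)\mapsto X(u,\omega)$. This holds because Feller processes admit càdlàg modifications, and the WF process even has continuous paths. The second is measurability in $u$ of the map $u\mapsto \E_x[h(X(u),z)]$. With these in hand, independence together with Fubini (or the freezing lemma for conditional expectations) gives the identity. Boundedness of $h$, or integrability of $|h|$ under both time-changed laws, justifies exchanging the expectation and the integral.

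One edge case needs attention. If $C$ can take the value $+\infty$, which happens for some killed subordinators, then $X(\infty)$ and $Z(\infty)$ must be given a meaning. For the WF pair I would use the limits: $X(\infty)\sim\mathcal D_\theta$ and $Z(\infty)=0$. Both sides of the duality at $u=\infty$ then equal $\E_{\mathcal D_\theta}[h(\cdot,z)]$, by the limit of the duality identity as $u\to\infty$. Alternatively, I would simply restrict to finite-valued clocks, which covers all the subordinators and inverse subordinators considered in the paper.
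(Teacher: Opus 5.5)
Your proposal is correct and follows essentially the same route as the paper, whose proof integrates the deterministic-time identity ${\cal T}_s h(\cdot,y)(x)={\cal T}^*_s h(x,\cdot)(y)$ against the law $\zeta^t$ of $C(t)$ and reads the result back as $\E[h(x,Z(C(t)))\mid Z(0)=y]$; this is exactly your conditioning chain. Your extra care supplies details the paper leaves implicit: joint measurability, integrability of $h$, the corrected form of the duality relation (the paper's display writes $f$ where $h$ is meant, and its middle integral $\int_0^t$ should read $\int_0^\infty$), and the $C=\infty$ edge case.
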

\begin{rmk}\label{rmk augdual}
    Proposition \ref{prop sub dual} establishes the dual equation 
    \eqref{eqn stochastic duality} for every $t$ so, when the Chapman-Kolmogorov property fails, in general \eqref{eqn stochastic duality}  does not necessarily characterize the full law of the dual. On the other hand, if $C$ is an inverse subordinator with dynamics
    described by \eqref{eq frac cond exp evo}, then \eqref{eqn stochastic duality} also implies duality between the respective evolution equations i.e.:
    $$ I^{(\beta,\pi)}_t\E_x[g(\wt X(t);z)]={\cal G}g(\cdot, z)(x)={\cal A}g(x,\cdot)(z)=I^{(\beta,\pi)}_t\widehat\E_z[g(x;\wt Z(t))],$$
    where ${\cal G}$ and ${\cal A}$ are, respectively, the generators of $X$ and $Z$. 
    The same holds when $C=R\circ S$ where $S$ is a subordinator and $R$ an inverse subordinator: ${\cal G},{\cal A}$ will be replaced by the generators of $X\circ S$ and $Z\circ S$.
    \end{rmk}

\begin{prop}\label{prop t-dual}
Let $\widetilde X$ be a sWF$(\bs{\theta, C,\nu^*})$ and denote with $\widetilde Z$ its dual with respect to the duality function $g=g({x},\boldsymbol{n})$ as defined in \eqref{eqn stochastic duality}. Then $\widetilde{Z}$ is a non-increasing pure-jump process in $\Z_+^K,$ with time-marginal distributions
\begin{align}
    \label{eq wtq}
{\widetilde{q}}^{ *}_{\boldsymbol{m}, \boldsymbol{l}}(t) :=\mathbb{P}[\widetilde{Z}(t)=\boldsymbol{l}\mid \widetilde{Z}(0)=\boldsymbol{m}]= \widetilde{q}_{|\boldsymbol{m}|, |\boldsymbol{l}|}(t)H(\boldsymbol l,\boldsymbol m),
\end{align}
where, for any $0\leq k\leq m<\infty$, the coefficients $\widetilde{q}_{m,k}$ are defined by 
\begin{align}
 \label{eqn:dual_from_m}
 \wt{q}^*_{\boldsymbol{m},\boldsymbol{l}}(t) =\wt{q}_{|\boldsymbol{m}|,|\boldsymbol{m}-\boldsymbol{l}|}(t) H(\boldsymbol{l}; \boldsymbol{m} , |\boldsymbol{l}|),
 \end{align}
 where $\wt{q}_{m,k}(t)$ has the same form as $\wt q_k(t)$ of formula \eqref{sub_bcp}, with weights $a_{j,k}^{(|{\theta}|)}$ replaced by ${a}_{j,k}^{(|\theta|,m)}:=a^{(|{\theta}|)}_{j,k}\frac{m_{[j]}}{(m+\theta)_{(j)}}.$
 Here $a_{[x]}$ denotes the (generalised) descending factorial i.e. $a_{[x]}=\Gamma(a+1)/\Gamma(a-x+1)$, and $H$ denotes the hypergeometric distribution.
The probability that $\wt Z$ does not leave its starting state by time $t$ is 
\begin{align}
\label{eq wtq surv}
   \widetilde q^*_{\boldsymbol{n},\boldsymbol{n}}(t)= \mathbb{P}[Z(t)=\boldsymbol{n}\mid Z(0)=\boldsymbol{n}]=\Phi_t(\lambda_{|\boldsymbol{n}|}).
\end{align}
In particular, \\
(i) if $C=S$ for a subordinator $S$ with L\'evy exponent $\psi(\beta,\pi)$, then $\widetilde Z$ is a pure jump Markov process in $\mathbb Z_+^K$ with negative jumps
\begin{align}
\label{eqn subrates}
    \boldsymbol{n}\to \boldsymbol{k}\ \ \textit{with rate}\  \widetilde{\lambda}_{|\boldsymbol{n}|, |\boldsymbol{k}|} H(\boldsymbol k,\boldsymbol n),\ \ \ \bs k\leq \bs n,
\end{align}
where
\begin{align}
\label{eqn subrates 1}
   \widetilde{\lambda}_{|\boldsymbol{n}|, |\boldsymbol{k}|}=\sum_{j=|\boldsymbol{k}|}^{|\boldsymbol{n}|}  (-1)^{j-|\boldsymbol{k}|+1}a^{(|{\theta}|,|\boldsymbol{n}|)}_{j,|\boldsymbol{k}|}\widetilde\lambda_j,
\end{align}
and \begin{align}
\label{eq:wtlambda}
   \widetilde{\lambda}_n:=\beta \lambda_n+\int_{0^+}^\infty\left(1-e^{-u\lambda_n}\right)\ \pi(\dif u),\ \ n\in\Z_+.
\end{align}\\
(ii) If $C=R$, where $R$ is the inverse of the subordinator $S$, then $\widetilde Z$ is a non-increasing process, with evolution described by 
\begin{align}
 \label{eq wtq inv}
    \widetilde{q}^{ *}_{\boldsymbol{n}, \boldsymbol{k}}(t)=\delta_{\boldsymbol{n}, \boldsymbol{k}}+\left(\sum_{j=|\boldsymbol{k}|}^{|\boldsymbol{n}|} (-1)^{j-|\boldsymbol{k}|+1}a^{(|{\theta}|,|\boldsymbol{n}|)}_{j,|\boldsymbol{k}|}\ \lambda_j\int_0^t\Phi_s(\lambda_j) \kappa(t-s)\dif s\right)H(\boldsymbol k,\boldsymbol n)
\end{align}
where $\kappa$ is the potential density of $S$. Equivalently,
\begin{align}
\label{eqn dual frac rates}
I^{(\beta,\pi)}_t{\widetilde{q}}^{ *}_{\boldsymbol{n}, \boldsymbol{k}}(t)=\left(\sum_{j=|\boldsymbol{k}|}^{|\boldsymbol{n}|} (-1)^{j-|\boldsymbol{k}|+1}a^{(|{\theta}|,|\boldsymbol{n}|)}_{j,|\boldsymbol{k}|}\ \lambda_j\Phi_t(\lambda_j) \right)H(\boldsymbol k,\boldsymbol n),\end{align}
where $I_t^{(\beta,\pi)}$ is the generalised fractional Caputo derivative from \eqref{eq frac cond exp evo}, so that 
\begin{align}
\label{fracrates}
I^{(\beta,\pi)}_t{\widetilde{q}}^{ *}_{\boldsymbol{n}, \boldsymbol{k}}(t)\bigg\vert_{t=0}=\lambda_{|\boldsymbol{n}|}\frac{n_i}{|\boldsymbol{n}|}\delta_{\boldsymbol{k}, \boldsymbol{n}-\boldsymbol{e}_i},\ \ \ \  i=1,\ldots K\end{align}
where $\boldsymbol{e}_i$ is the vector with the $i$-th coordinate equal to 1 and all other coordinates zero, and $\delta_{\cdot,\cdot}$ is the Kronecker delta. \\
\end{prop}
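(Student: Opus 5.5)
The plan is to reduce everything to the classical Wright-Fisher dual $Z$ of Section~\ref{sec dual proc}, using Proposition~\ref{prop sub dual}. That proposition gives $\widetilde Z = Z\circ C$ as the dual of $\widetilde X$ with respect to $g$. Because $C$ is non-decreasing and independent of $Z$, and $Z$ is a pure-death process in $\Z_+^K$ with unit negative jumps, $\widetilde Z$ inherits non-increasing trajectories. Each of its jumps is an aggregate of the unit deaths $Z$ performs while $C$ jumps, so jumps of size larger than one are allowed. For the time-marginals I would condition on $C(t)$: $\Prb[\widetilde Z(t)=\bs l\mid \widetilde Z(0)=\bs m]=\E[q^*_{\bs m,\bs l}(C(t))]$, where $q^*$ is the transition function of $Z$.

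I would take as known (from \cite{Griffiths10,griffiths06,TAVARE1984}) the classical decomposition $q^*_{\bs m,\bs l}(s)=q_{|\bs m|,|\bs l|}(s)H(\bs l;\bs m,|\bs l|)$. This holds because lines are lost uniformly at random, so the types of the survivors are hypergeometric. The total-mass factor has the spectral form $q_{m,k}(s)=\sum_{j=k}^m(-1)^{j-k}e^{-\lambda_j s}a^{(|\theta|,m)}_{j,k}$, i.e.\ the coefficients $a^{(|\theta|)}_{j,k}$ multiplied by the falling-factorial ratio $m_{[j]}/(m+|\theta|)_{(j)}$. Since the hypergeometric factor does not depend on $s$, taking expectations over $C(t)$ only replaces $e^{-\lambda_j s}$ by $\Phi_t(\lambda_j)$, exactly as in the proof of Proposition~\ref{prop tf sub}. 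This yields \eqref{eq wtq}--\eqref{eqn:dual_from_m}. The sum is finite, so no convergence issue arises. The survival probability \eqref{eq wtq surv} follows because $Z$ holds at $\bs n$ for an exponential$(\lambda_{|\bs n|})$ time, giving $\E[e^{-\lambda_{|\bs n|}C(t)}]=\Phi_t(\lambda_{|\bs n|})$.

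\emph{Case (i), $C=S$.} $\widetilde Z$ is a Markov process obtained by Bochner subordination, with $\Phi_t(\lambda_j)=e^{-t\psi(\lambda_j)}=e^{-t\widetilde\lambda_j}$. The generator of $\widetilde Z$ is given by \eqref{eqn sWF gen 2} applied to $Z$, so its off-diagonal rates are
\[
\beta\,\widehat{\cal L}(\bs n,\bs k)+\int_{(0,\infty)} q^*_{\bs n,\bs k}(u)\,\pi(\dif u).
\]
Substituting the finite spectral expansion into this integral, the $j=|\bs n|$ term cancels against the constant part, and the remaining terms combine as $\int(1-e^{-u\lambda_j})\pi(\dif u)$. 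Together with the drift part this produces $\widetilde\lambda_j$ as in \eqref{eq:wtlambda}. An equivalent and quicker route is to differentiate $\widetilde q^*_{\bs n,\bs k}(t)$ at $t=0$, which gives $-\sum_j(-1)^{j-|\bs k|}a^{(|\theta|,|\bs n|)}_{j,|\bs k|}\widetilde\lambda_j H$, i.e.\ \eqref{eqn subrates}--\eqref{eqn subrates 1}. For $\bs k\neq\bs n$, the sum $\sum_j(-1)^{j-k}a_{j,k}$ (the value of the expansion at $t=0$) is zero, so constant terms vanish. This identity is what I would check explicitly.

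\emph{Case (ii), $C=R$.} I would insert Proposition~\ref{prop frac conv eqn} termwise. It gives $\Phi_t(\lambda_j)=1-\lambda_j\int_0^t\kappa(t-s)\Phi_s(\lambda_j)\dif s$. The constant parts sum to $\delta_{\bs n,\bs k}$ by the same $t=0$ identity, which yields \eqref{eq wtq inv}. Applying $I^{(\beta,\pi)}_t$ linearly and using \eqref{eqn phi fractional} gives \eqref{eqn dual frac rates}. At $t=0$, $\Phi_0=1$, so \eqref{eqn dual frac rates} equals $-\sum_j(-1)^{j-k}a^{(|\theta|,n)}_{j,k}\lambda_j H$, which is the generator of $Z$ evaluated at $(\bs n,\bs k)$. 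That generator is $\lambda_{|\bs n|}\frac{n_i}{|\bs n|}$ for $\bs k=\bs n-\bs e_i$ and zero for larger jumps, giving \eqref{fracrates}.

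I expect the main obstacle to be the combinatorial identities: that the alternating sums of the $a^{(|\theta|,m)}_{j,k}$ equal $\delta$ at $t=0$, and that $\sum_j(-1)^{j-k+1}a_{j,k}\lambda_j$ vanishes unless $k=n-1$. I would first try to obtain these directly from the Kolmogorov equations of the classical pure-death process rather than computing them by hand. Applying those equations at $s=0$ to the spectral form of $q_{m,k}$ gives both identities immediately, since the classical transition function satisfies $q_{m,k}(0)=\delta_{mk}$ and has derivative $\lambda_m\delta_{k,m-1}$ for $k\neq m$.
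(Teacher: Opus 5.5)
Your proposal is correct and follows essentially the same route as the paper: condition on $C(t)$ in the classical dual transition function $q_{|\bs m|,|\bs l|}(s)H(\bs l;\bs m,|\bs l|)$ and replace $e^{-\lambda_j s}$ by $\Phi_t(\lambda_j)$. Then differentiate at $t=0$ in the subordinator case, and in the inverse-subordinator case insert Proposition~\ref{prop frac conv eqn} termwise and apply $I^{(\beta,\pi)}_t$. Your small departures are all valid, and if anything they make explicit steps the paper leaves implicit: you obtain \eqref{eq wtq surv} from the exponential holding time of $Z$ rather than from $a^{(|\theta|,|\bs n|)}_{|\bs n|,|\bs n|}=1$, you argue monotonicity pathwise, and you derive the alternating-sum identities from $q_{m,k}(0)=\delta_{mk}$ and $q'_{m,k}(0)$.
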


   Equation \eqref{fracrates} shows that, after waiting a (non-exponential) $\Phi$-distributed amount of time, the only jumps that can occur in the dual processe are almost surely of size one. Thus, while time-changing the WF dual by a subordinator $S$ induces a non-increasing Markov process that may make jumps larger than one, as shown by \eqref{eqn subrates 1}, time-changing it by an inverse subordinator induces a non-increasing, non-Markovian process evolving almost surely solely through unit jumps. Applying a mixed clock $C=R\circ S_1$, wiith $S_1$ an independent subordinator with L\'evy exponent $\psi_1(\lambda)$, then equation \eqref{fracrates} would become of the form
   \begin{align}
       \label{fracratesmix}
        I^{(\beta,\pi)}_t{\widetilde{q}}^{ *}_{\boldsymbol{n}, \boldsymbol{k}}(t)\bigg\vert_{t=0}
        =\sum_{j=|\boldsymbol{k}|}^{|\boldsymbol{n}|}  (-1)^{j-|\boldsymbol{k}|+1}a^{(|{\theta}|,|\boldsymbol{n}|)}_{j,|\boldsymbol{k}|}\psi_1(\lambda_j),\ \ \ \  i=1,\ldots K,
\end{align}
inheriting the shape \eqref{eqn subrates 1}, which shows that the dual in this mixed case may descend through non-unit jumps. Notice that both \eqref{eqn subrates 1} and \eqref{fracratesmix} vanish whenever $\bs k>\bs n$.
\begin{figure}[htp]
\centering
\includegraphics[width=\linewidth]{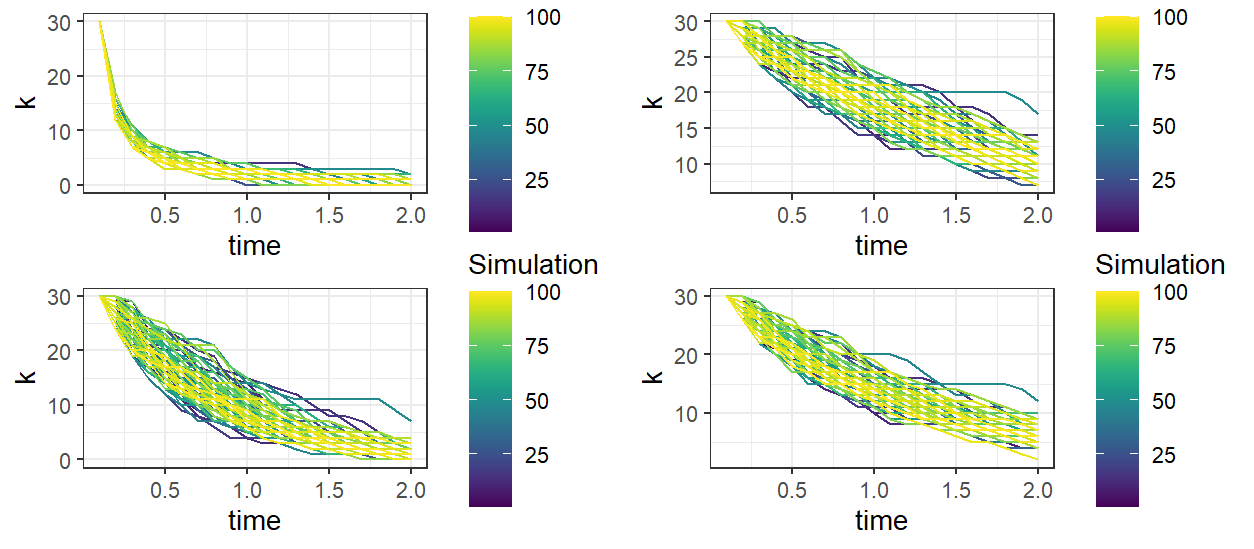}
  \caption{100 Simulated trajectories of the sWF's dual process for $S$ (a) the identity, (b) a $1/2$-stable subordinator, (c) an inverse-gaussian process and (d) an inverse-gamma process with shape $5$ and scale $2$. Each process shares a common mutation total of $|{\theta}|=2$ and drift of $\beta=0.01$. All duals were initiated at $n=30$ and sampled over $t\in[0,2]$ at regular intervals of $0.1$. Graphs are presented clockwise top left to bottom left.}
  \label{fig: sub qnk trajectories}
\end{figure}

\section{Exact simulation}\label{sec algorithms}
The expansion \eqref{eqn tf subdual} shows that, in order to sample from the distribution $\widetilde{p}_t(x,\cdot)$ of a sWF process, conditional on its initial state $x$, one has to sample from a mixture of posterior Dirichlet distributions, just like for WF diffusions. The only difference lies in the weights $\widetilde{q}_{k} (t)$ of the mixture. Such weights are given, for fixed $t$, by the distribution of the time-changed dual $|\widetilde Z(t)|$ and, just as in \cite{Jenkins_2017}, sampling exactly from these weights is the main difficulty. 
There are two main approaches, both leading to an extension of Jenkins and Span\`o's Algorithm \ref{alg:simulating_WF}.\\
One option (Option A in Algorithm \ref{alg:simulating_TCWF} below) is to first sample a random time, $C(t)$, and to then input this as the value of $t$ in Algorithm \ref{alg:simulating_WF}. The other option (Option B), when possible, is to sample exactly from $\widetilde{q}_{k} (t)$ directly, generalising an existing Algorithm 2 of \cite{Jenkins_2017}.

\begin{algorithm}
\caption{Exact simulation of $\widetilde{p}_t(x,\cdot)$}
\label{alg:simulating_TCWF}
\begin{algorithmic}[1] 
    \State Simulate $\widetilde{q}_k^{\vert{\theta}\vert}(t)$ using one of two options:
        \Option{Option A:}
        \Indent{Simulate $s\sim C(t)$.}
        \Indent{Simulate $m \sim q_{\cdot}^{|{\theta}|}(s)$ using Algorithm 2 from \cite{Jenkins_2017}.}
        \EndIndent
    \EndIndent
        \Option{Option B:}
        \Indent{Simulate $m \sim \widetilde{q}_{\cdot}^{\vert{\theta}\vert}(t)$ using Algorithm \ref{alg:simulating_sub_dual}.}
        \EndIndent
    \EndIndent
    
    \State Given $m$, simulate $\boldsymbol{l} \sim \mathcal{M}(m,x)$.
    \State Given $\boldsymbol{l}=(l_1,l_2,...,l_K)$, simulate $U \sim \mathcal{D}_{{\theta}+\boldsymbol{l}}$.
    \State \Return $U$.
\end{algorithmic}
\end{algorithm}

Option A is very natural. Its feasibility depends on the law of the clock, $C$, i.e. subordinator or inverse-subordinator: one can use Algorithm 4.3 from \cite{Dassios2019} for (suitable) subordinators and Algorithm 1 in \cite{Aleks2023} for inverse subordinators. Both algorithms however carry restrictions on the class of admissible clocks. For example, \cite{Aleks2023} provides algorithms for a large class of inverse subordinators with L\'{e}vy densities of the form\begin{align}
    \label{fwf: levy measure for subordinator}
    \pi^{R}_{r,q,\alpha}(\dif u):=\mathbb{I}_{[0,r]}(u) e^{-qu}u^{-\alpha-1}\dif u + \iota(\dif u),
\end{align}where $r\in(0,\infty],q\geq 0,\alpha\in(0,1)$ and $\iota_r((0,\infty))<\infty$.

Option B (step 1b) offers an alternative: Algorithm A-3
), extending [\cite{Jenkins_2017}, Algorithm 2], to include sWF driven by a rich class of subordinator clocks. 
Algorithm \ref{alg:simulating_sub_dual}
 makes use of a sampling scheme for alternating series of Devroye (\cite{Devroye1986}, Ch. 4).
Denote $b_k^{(t,|\theta|,\psi)}(m):=a_{km}^{|{\theta}|}  e^{-t\psi(\lambda_k)}, k\in\Z_+$, and define the quantity\begin{align}
    \label{eqn Cm}
       {B}_m^{(t,{|{\theta}|},\psi)} &:=\inf\{i\geq 0: {b}_{i+m+1}^{(t,{|{\theta}|,\psi})}(m) < {b}_{i+m}^{(t,{|{\theta}|},\psi)}(m)\}
    \end{align}as well as the following lower and upper sums that will be used to sandwich the sequence $b_k^{(t,|\theta|,\psi)}(m)$:
\begin{align*}
    S_{\boldsymbol{k}}^{-}(M) := \sum_{m=0}^M{\sum_{i=0}^{2k_m+1}{(-1)^i b_{m+i}^{(t,|\theta|,\psi)}(m)}},~~~S_{\boldsymbol{k}}^{+}(M) := \sum_{m=0}^M{\sum_{i=0}^{2k_m}{(-1)^i b_{m+i}^{(t,|\theta|,\psi)}(m)}}.
\end{align*}
The Algorithm is then structurally identical to \cite{Jenkins_2017}'s Algorithm 2.
\begin{algorithm}
\caption{Exact simulation of $\widetilde{q}_k^{\vert{\theta}\vert}(t)$}
\label{alg:simulating_sub_dual}
\begin{algorithmic}[1]
    \State $m \gets 0, \quad k_0 \gets 0, \quad \boldsymbol{k} \gets (k_0)$
    \State Simulate $U \sim \text{Uniform}[0,1]$
    
    \Loop 
        \State Set $k_m \gets \lceil B_m^{(t,{\theta})}/2 \rceil$
        
        \While{$S_{\boldsymbol{k}}^-(m) < U < S_{\boldsymbol{k}}^+(m)$}
            \State $\boldsymbol{k} \gets \boldsymbol{k} + (1, 1, \dots, 1)$ 
        \EndWhile

        \If{$S_{\boldsymbol{k}}^-(m) > U$}
            \State \Return $m$
        \Else 
            \State $\boldsymbol{k} \gets (k_0, k_1, \dots, k_m, 0)$
            \State $m \gets m + 1$
        \EndIf
    \EndLoop
\end{algorithmic}
\end{algorithm}

Algorithm \ref{alg:simulating_sub_dual} works if one can ensure that, for any fixed $m$ and $t$, the sequence of terms in the series  $$\wt q^{\theta,\psi}_m(t):=\sum_{k\geq m} (-1)^{k-m}a_{km}^{|{\theta}|}  e^{-t\psi(\lambda_k)}$$ become, in absolute value, eventually monotone decreasing.
Proposition \ref{prop sampling sWF} identifies conditions on the L\'evy exponent $\psi=\psi(\beta,\pi)$ of the clock for it to happen.

\begin{prop}\label{prop sampling sWF}
    Let $\widetilde{Z}$ be the dual process of a sWF process $\widetilde{{X}}={X}\circ S$, for a subordinator $S$ with Laplace exponent $\psi=\psi(\beta,\pi).$ Assume either $\beta> 0$ or 
 \begin{align}
    \beta=0\ \ \text{and}\ \  \pi(\dif u)\sim u^{-1-\alpha}\dif u,\ \ u\downarrow 0, \ \ \  \alpha\in(1/2,1) .
        \label{eq levycondition}
    \end{align} 
   Let $ {b}_{k}^{(t,{|{\theta}|,\psi})}(m)=a_{km}^{|{\theta}|}  e^{-t\psi(\lambda_k)}.$ Define
    \begin{align*}
        {D}_0^{(t,{|{\theta}|},\psi)}&:=\inf\{k\geq \Big(\frac{1}{t\beta}-\frac{{|{\theta}|}+1}{2}\Big)\vee 0:({|{\theta}|}+2k+1)e^{- t(\psi(\lambda_{k+1})-\psi(\lambda_k))}<1\}.
    \end{align*}
    Then
\begin{enumerate}
    \item ${B}_m^{(t,{|{\theta}|,\psi})}<\infty ~\forall ~m$.
    \item ${b}_k^{(t,{|{\theta}|,\psi})}(m) \downarrow 0$ as $k\rightarrow \infty~\forall~ k\geq m+ {B}_m^{(t,{|{\theta}|})}$.
    \item ${B}_m^{(t,{|{\theta},\psi|})}=0~\forall~m> {D}_0^{(t,{|{\theta}|},\psi)},\beta\neq 0$.
\end{enumerate}
\end{prop}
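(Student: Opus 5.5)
The plan is to study the ratio of consecutive terms, following Jenkins and Spanò's argument for the WF case, where $\psi(\lambda)=\lambda$. Write
\begin{align*}
r_k(m):=\frac{b_{k+1}^{(t,|\theta|,\psi)}(m)}{b_k^{(t,|\theta|,\psi)}(m)}=\frac{a^{|\theta|}_{k+1,m}}{a^{|\theta|}_{k,m}}\,e^{-t(\psi(\lambda_{k+1})-\psi(\lambda_k))}.
\end{align*}
From the definition of $a_{j,m}^{(|\theta|)}$, the coefficient ratio is explicit:
\begin{align*}
\frac{a_{k+1,m}}{a_{k,m}}=\frac{(2k+|\theta|+1)(m+|\theta|+k-1)}{(2k+|\theta|-1)(k+1-m)}.
\end{align*}
This is a rational function of $k$ tending to $1$ as $k\to\infty$, and it is bounded by $(|\theta|+2k+1)$ times a factor that is uniformly controlled once $k\ge m$. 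Claims 1 and 2 then reduce to showing that $\psi(\lambda_{k+1})-\psi(\lambda_k)\to\infty$ fast enough that $r_k(m)<1$ for every $k$ beyond some finite threshold. Indeed, $B_m<\infty$ follows from the existence of a first index with $r<1$, and eventual monotonicity plus $b_k\to0$ follows from $r_k<1$ for all large $k$ together with $e^{-t\psi(\lambda_k)}$ decaying faster than the polynomial growth of $a_{k,m}$.

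\textbf{Case $\beta>0$.} Since the integral part of $\psi$ is nondecreasing, $\psi(\lambda_{k+1})-\psi(\lambda_k)\ge \beta(\lambda_{k+1}-\lambda_k)=\beta(k+|\theta|/2)$. Thus $r_k(m)\le c_m(k)\,e^{-t\beta(k+|\theta|/2)}$ with $c_m(k)$ polynomial in $k$, which goes to $0$. This gives Claims 1 and 2. For Claim 3, I would mimic Jenkins–Spanò's $D_0$ argument. The lower bound $k\ge 1/(t\beta)-(|\theta|+1)/2$ in the definition of $D_0$ is exactly where the map $k\mapsto(|\theta|+2k+1)e^{-t\beta(k+\cdot)}$ becomes decreasing (its derivative condition is $2\le t\beta(|\theta|+2k+1)$). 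Combined with the monotonicity of $\psi(\lambda_{k+1})-\psi(\lambda_k)$ in $k$, once the defining inequality holds at $D_0$ it holds for all larger $k$. It then remains to check that, for $m>D_0$, the $m$-dependent part of $a_{k+1,m}/a_{k,m}$, namely $(m+|\theta|+k-1)/(k+1-m)$ at $k=m$, is absorbed, so that $r_k(m)<1$ for all $k\ge m$, i.e. $B_m=0$. I expect this bookkeeping to need care: at $k=m$ the factor $(2m+|\theta|-1)$ appears, and it must be compared against $(|\theta|+2k+1)$. I would first try to bound the product form directly, as in Jenkins–Spanò's Proposition, replacing $e^{-t\lambda}$ by $e^{-t\psi(\lambda)}$.

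\textbf{Case $\beta=0$.} Here the key step, and the main obstacle, is the growth of $\psi(\lambda)$. By a Tauberian argument, $\pi(\dif u)\sim u^{-1-\alpha}\dif u$ near $0$ gives $\psi(\lambda)\sim \Gamma(1-\alpha)\alpha^{-1}\lambda^\alpha$ as $\lambda\to\infty$. We also need the increment asymptotics
\begin{align*}
\psi(\lambda_{k+1})-\psi(\lambda_k)=\int_{\lambda_k}^{\lambda_{k+1}}\psi'(s)\,\dif s,\qquad \psi'(s)=\int u\,e^{-su}\,\pi(\dif u)\sim c\, s^{\alpha-1}.
\end{align*}
The derivative asymptotics come from monotone density / Karamata applied to $\psi'$, which is legitimate because $\psi'$ is itself a Laplace transform. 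Since $\lambda_k\asymp k^2/2$ and $\lambda_{k+1}-\lambda_k\asymp k$, the increment is $\asymp k^{2\alpha-1}$. This diverges precisely when $\alpha>1/2$, which explains the hypothesis. So $e^{-t(\psi(\lambda_{k+1})-\psi(\lambda_k))}$ decays like $\exp(-ct\,k^{2\alpha-1})$ and beats the bounded coefficient ratio, giving $r_k<1$ eventually, and likewise $b_k\le \mathrm{poly}(k)\,e^{-ct k^{2\alpha}}\to0$. This yields Claims 1 and 2; Claim 3 is only asserted for $\beta\ne0$.
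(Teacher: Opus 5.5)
Your overall method is the paper's. You use the ratio $r_k(m)=b_{k+1}(m)/b_k(m)=f_m(k)\,e^{-t(\psi(\lambda_{k+1})-\psi(\lambda_k))}$; your coefficient ratio is the correct one, since the paper's display has $|\theta|+m-k-1$ where $|\theta|+m+k-1$ is meant. For $\beta>0$ you use the same bound $\psi(\lambda_{k+1})-\psi(\lambda_k)\ge\beta(k+|\theta|/2)$.

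For $\beta=0$ your route is different and more direct. You integrate the Abelian asymptotics $\psi'(s)\sim\Gamma(1-\alpha)s^{\alpha-1}$ over $[\lambda_k,\lambda_{k+1}]$ and get the increment itself, $\asymp k^{2\alpha-1}$. The paper instead differentiates the increment in $k$ (splitting at $u=2\log k/\lambda_k$ and Taylor-expanding) to show $L'(k)<0$ eventually. Your version is sound and gives $r_k(m)\to0$ and $b_k(m)\to0$, i.e.\ Claim 1 and the \emph{eventual} form of Claim 2. In fact these eventual claims hold for every $\alpha\in(0,1)$, since $\log f_m(k)=O(1/k)$ for fixed $m$. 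The hypothesis $\alpha>1/2$ is only what makes the $m$-uniform majorant $L(k):=(|\theta|+2k+1)e^{-t(\psi(\lambda_{k+1})-\psi(\lambda_k))}$ tend to $0$.

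The gap is in the non-asymptotic parts, which are single-crossing statements. Claim 2 as stated needs $r_k(m)<1$ for all $k\ge m+B_m$, i.e.\ from the \emph{first} down-step on. Claim 3 needs $L(k)<1$ for all $k>D_0$. You reduce Claim 2 to ``$r_k(m)<1$ for all large $k$'', which does not give this. For Claim 3 you invoke monotonicity of $k\mapsto\psi(\lambda_{k+1})-\psi(\lambda_k)$, which is false in general. $\psi$ is a Bernstein function, hence concave, and the jump part $\int(e^{-\lambda_k u}-e^{-\lambda_{k+1}u})\pi(\dif u)$ can decrease, even to $0$ (atoms, a Gamma component).

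This breaks the statements as written, not just your argument. Take $|\theta|=1$, $t=1$, $\beta=10^{-2}$ and $\pi=c\,\delta_{10^{-3}}$ with $c=6.4\cdot10^{3}$.
Then $L(99)\approx0.81$ and $L(100)\approx1.20$, so $D_0=99$ while $r_{100}(100)=L(100)>1$, i.e.\ $B_{100}\ge1$. Likewise $B_{99}=0$ although $r_{115}(99)\approx1.57>1$. The paper's proof has the same weak point. It infers that the exponential factor decreases in $k$ from $\psi(\lambda_{k+1})>\psi(\lambda_k)$, which only gives positive increments, not increasing ones.

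What does go through is the paper's comparison $f_m(k)\le f_k(k)=|\theta|+2k+1$ for $k\ge m$, which also settles the bookkeeping you left open. At $k=m$ the factor $2m+|\theta|-1$ cancels exactly, so $r_m(m)=L(m)$ and $B_m=0\iff L(m)<1$. In general $r_k(m)\le L(k)\le M(k):=(|\theta|+2k+1)e^{-t\beta(k+|\theta|/2)}$. As you observed, $M$ is decreasing beyond $1/(t\beta)-(|\theta|+1)/2$.

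So define $D_0$ through $M$ instead of $L$; the two agree when $\pi=0$. Then every $m>D_0$ has $r_k(m)<1$ for all $k\ge m$, hence $B_m=0$ and monotone decrease from $k=m$. With $\pi\neq0$, this corrected Claim 3, together with the eventual versions of Claims 1--2, is what can actually be proved along these lines.
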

Figure \ref{fig: sWF tf half half} (below) compares $p_t^{|\theta|}(0.5,\dif u)$ (top left) with ${\wt p}_t^{|\theta|,C}(0.5,\dif u)$ for three choices of subordinator clock. The shape of the transition functions are qualitatively similar. The most noticeable difference is the speed of convergence to the stationary distribution, which in this case would be a Uniform.
Additional figures of the sWF transition function for varying $\theta$ values can be found in the Appendix.

\begin{figure}[H]
    \centering
    \includegraphics[width=0.8\linewidth]
    {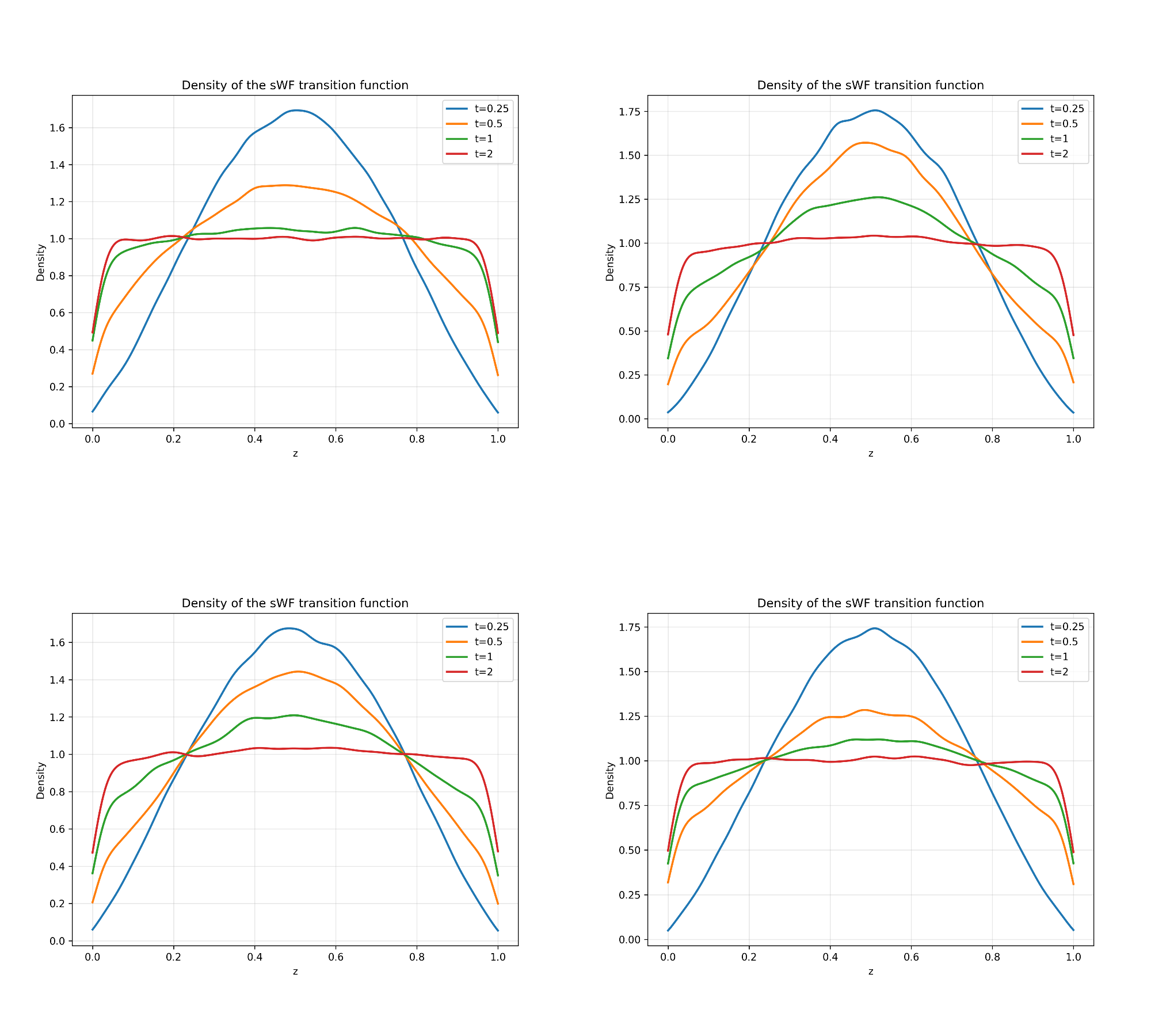}
    \vspace{-0.8cm}
    \caption{$100,000$ samples from the sWF transition function $\widetilde{p}_t^{({\theta},C)}(0.5,\dif z)$ with ${\theta}=(0.5,0.5),~t\in\{0.25,0.5,1,2\}$ and four subordinator clocks $C=S$, (clockwise from topleft): (a) the identity $S(t)\equiv t$, (b) $\alpha$-Stable subordinator with $\alpha = 0.5$; (c) Inverse Gaussian with $\delta=\gamma=1$; (d) Gamma process with $a=b=1$}
    \label{fig: sWF tf half half}
\end{figure}

\newpage

\begin{figure}
\centering
\includegraphics[width=0.9\linewidth]{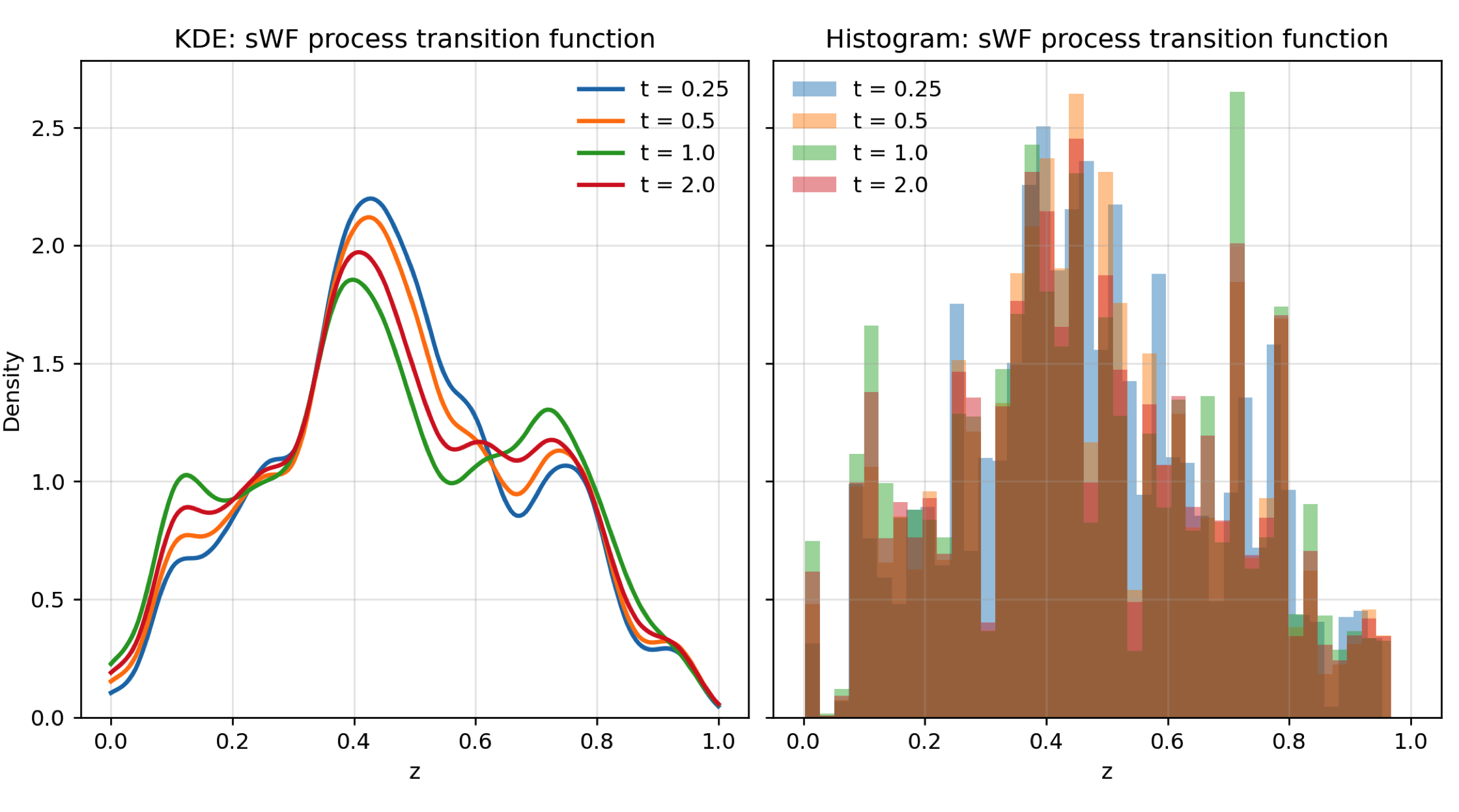}
  \caption{Smoothed density estimate [using Kernel Density Estimation] (left) and Histogram (right) of $10,000$ samples from $\widetilde{p}_t^{([1,1],R)}(0.5,\dif z)$ for time-inputs $t\in\{0.25,0.5,1,2\}$ and inverse-subordinator clock $C=R$ with density $\pi_{1,1,0.5}^{R}(\dif u)$.
}
  \label{fig: fwf tf}
\end{figure}

Figure \ref{fig: fwf tf} confirms what was seen analytically earlier: $\wt p_t^{(|\theta|,R)}(x,\dif u)$ converges more slowly to $\mathcal{D}_{\theta}$ as compared to its diffusive counterpart $p_t^{(|\theta|)}(x,\dif u)$.
\section{Posterior inference and computable filters}\label{sec posterior}
We now turn to the use of sWF processes for inference, leveraging several structural properties in common with WF diffusions, to extend naturally several results on filtering obtained in \cite{Papaspiliopoulos_2014}.
The statistical model of interest is given by the set-up \eqref{The WF Model}, with $ X$ replaced by a sWF process $\widetilde{ X}$: 
 \begin{equation}
         \begin{aligned}
         \label{The Model}
             Y(t_i)\vert \widetilde{ X}(t_i) &\sim f_{\widetilde{ X}(t_i)},\ \ \ i=0,\ldots, N\\
         (\widetilde{ X}(t):t\geq 0)\mid C  &\sim \text{sWF}({\theta},C,\nu^*),~~~C\sim G, 
         \end{aligned}
\end{equation}
where $G$ is the law of the clock $(C(t):t\geq 0)$. For given times $0=t_0<t_1,,\ldots< t_N$, $N\in\mathbb N$, we will denote by $(Y_i,\widetilde{ X}_i)_{i=1,\ldots,N}$ the collection of observation-parameter pairs for each time $t_1,\ldots, t_N,$ respectively. The {\em emission density} $f_{{x}_i}(y)$, i.e. the time-marginal conditional likelihood of $Y_i$ given $\widetilde{{X}_i}={x}_i$, is assumed to be Multinomial$(m_i,  x_i)$ on $\mathbb Z_+^K$, for some integers $m_i: i=0,\ldots, N$. (Analogous results can be obtained trivially under the alternative assumption of $f_{{x}_i}$ being {\em i.i.d.} categorical$({x}_i)$ law on $[K]^{m_i}$). For any vector or sequence $z=(z_0,z_1,\ldots)$, we will use the subsequence notation $z_{l:k}:=(z_l,\ldots,z_k)$.\\
For Markovian sWF models (with subordinator clock), the HMM structure is preserved directly (Section \ref{sec Markov filtering}). For non-Markovian sWF signals (with inverse subordinator clock), we re-formulate the model \eqref{The Model} with $\wt X$ replaced by the augmented Markov signal $V=(X\circ C, C)$, obtaining a new HMM. We will derive in Section \ref{sec frac posterior} the posterior (update and predictive) distributions, given data $Y_{0:i},i=0,...,N$, for 
$V$ as well as for its two marginals $X\circ C$ (our original objective) and for $C$.
 We will show that computability of the filter for 
$X\circ C$ remains ensured by the duality argument, provided that the prior distribution of 
$C$ allows for exact sampling, or that the Laplace transform of the posteriors of $C$ given data are tractable. 
   
\subsection{The first posterior update}
 
 A fundamental observation, holding regardless of the clock choice, is that the first update and prediction steps map finite mixtures of Dirichlet distributions to (finite) mixtures of Dirichlet distributions.
More precisely, define the class
\begin{align*}
    \F := \left\{\sum_{\boldsymbol{m}\in \Z_+^K}{w_{\boldsymbol{m}}\D_{{\theta}+\boldsymbol{m}}(\dif x)}: {\theta} \in \R_+^K,{x}\in\Delta_K,\boldsymbol{w}=(w_{\boldsymbol m})_{\boldsymbol{m}\in\Z_+^K}\in \bigcup_{n=1}^{\infty}\Delta_{n} \right\},
\end{align*} be the set of all finite mixtures of Dirichlet distributions with parameters not less than $\theta$.
\begin{prop}
     \label{cor: base filter steps}
   Let $\widetilde{{X}}$ be a sWF$({\theta},C, \nu^*)$ prior. Assume $\mathcal{L}(\widetilde{{X}}_0)=\nu^*\in \F$. Conditionally on the observation $Y_0=y_0,$ then also $\nu_{0\mid 0:0} \in \F$ and $\nu_{1|0:0}\in \F$. If $\nu^*\in{\F}$, then $\nu_0\in{\F}$ and $\nu_{1|0:0}\in \F$ . 
\end{prop}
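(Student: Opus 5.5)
The argument has two independent steps: a Bayesian update of the initial law, then one prediction step through the transition function of Proposition \ref{prop tf sub}. Throughout, write $\nu^*=\sum_{\boldsymbol m\in\Lambda}w_{\boldsymbol m}\D_{\theta+\boldsymbol m}$ with $\Lambda\subset\Z_+^K$ finite.

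\textbf{Update.} By Bayes' theorem with the Multinomial$(m_0,x)$ emission density, $\nu_{0|0:0}(\dif x)\propto \mathcal M(y_0;m_0,x)\nu^*(\dif x)$. For each mixture component, the conjugacy identity $\D_{\theta+\boldsymbol l}=g(\cdot,\boldsymbol l)\D_\theta$ from Section \ref{sec dual proc} gives
\[
\prod_i x_i^{y_{0,i}}\,\D_{\theta+\boldsymbol m}(\dif x)=\frac{\prod_i(\theta_i+m_i)_{y_{0,i}}}{(|\theta|+|\boldsymbol m|)_{|y_0|}}\,\D_{\theta+\boldsymbol m+y_0}(\dif x).
\]
After normalising, $\nu_{0|0:0}=\sum_{\boldsymbol m\in\Lambda}\hat w_{\boldsymbol m}\D_{\theta+\boldsymbol m+y_0}$, where $\hat w_{\boldsymbol m}\propto w_{\boldsymbol m}$ times the Dirichlet--multinomial marginal likelihood of $y_0$ under component $\boldsymbol m$. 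The index set $\Lambda+y_0$ is finite, so $\nu_{0|0:0}\in\F$. Taking no observation ($m_0=0$) recovers $\nu_0=\nu^*\in\F$, which is the second sentence of the statement.

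\textbf{Prediction.} We need $\nu_{1|0:0}=\int\nu_{0|0:0}(\dif x)\,\wt p_{t_1}(x,\cdot)$. For a single component $\D_{\theta+\boldsymbol n}$, use duality rather than the series \eqref{eqn tf subdual} directly. Since $\D_{\theta+\boldsymbol n}=g(\cdot,\boldsymbol n)\D_\theta$ and $\wt X$ is $\D_\theta$-reversible, for bounded $f$
\[
\int\D_{\theta+\boldsymbol n}(\dif x)\,\wt T_t f(x)=\int\D_\theta(\dif u)\,f(u)\,\wt T_t g(\cdot,\boldsymbol n)(u).
\]
By Propositions \ref{prop sub dual} and \ref{prop t-dual}, $\wt T_tg(\cdot,\boldsymbol n)(u)=\sum_{\boldsymbol k\le\boldsymbol n}\wt q^*_{\boldsymbol n,\boldsymbol k}(t)\,g(u,\boldsymbol k)$, a finite sum because the dual is non-increasing. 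Hence
\[
\D_{\theta+\boldsymbol n}\wt P_t=\sum_{\boldsymbol k\le\boldsymbol n}\wt q^*_{\boldsymbol n,\boldsymbol k}(t)\,\D_{\theta+\boldsymbol k}.
\]
Linearity over the finitely many components of $\nu_{0|0:0}$ then shows $\nu_{1|0:0}\in\F$, with weights that are nonnegative and sum to one because they are transition probabilities. Applying the same step to $\nu_0$ covers the unobserved case.

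\textbf{Main obstacle.} The delicate point is justifying the duality identity for a general, possibly non-Markovian, clock $C$. Proposition \ref{prop sub dual} only gives the one-time identity \eqref{eqn stochastic duality}, but that is all the prediction step needs. Concretely, condition on $C(t_1)=s$, apply the WF duality \eqref{proof WF dual} at the fixed time $s$, and average over $G_{t_1}$. Fubini applies because $g(\cdot,\boldsymbol n)$ is a bounded polynomial and the sum over $\boldsymbol k$ is finite, which gives $\wt q^*(t)=\E[q^*(C(t))]$.

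A second point to check is nonnegativity of the weights $\wt q^*_{\boldsymbol n,\boldsymbol k}(t)$. Although the alternating-series formulas do not show it directly, it follows because each weight is an average of WF dual probabilities under the law of $C(t_1)$.
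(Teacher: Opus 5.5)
Your proposal is correct and follows essentially the same route as the paper: conjugacy gives the update $\nu_{0|0:0}=\sum_{\boldsymbol m}\hat w_{\boldsymbol m}\D_{\theta+\boldsymbol m+y_0}$. Dirichlet-reversibility together with the time-changed duality of Propositions \ref{prop sub dual}--\ref{prop t-dual} then turns each $\D_{\theta+\boldsymbol n}\wt P_{t_1}$ into the finite mixture $\sum_{\boldsymbol k\le\boldsymbol n}\wt q^*_{\boldsymbol n,\boldsymbol k}(t_1)\D_{\theta+\boldsymbol k}$, whose weights are nonnegative as averages of WF dual probabilities over the law of $C(t_1)$.

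One small reading point: in the paper $\nu_0$ denotes the filter $\nu_{0|0:0}$, not the unobserved prior, so the second sentence of the statement is already covered by your update step rather than by setting $m_0=0$.
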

Notice that, if one were only interested in proving that, whenever $\nu^*\in\F$, $\nu_{1\vert 0:0}$ consists of a finite mixture of Dirichlet measures (regardless of the sign of its coefficients), one could simply use the expansion \eqref{eqn sub tf polydual} for $\widetilde P_{t}$, in terms of eigenvalues $\Phi_t(\lambda_n)$, multivariate orthogonal polynomial eigenfunctions $Q_{n}({x},{z})$ and stationary distribution $\mathcal{D}_{{\theta}}({x})$: the functions 
    $g_{{\theta}}( x,\boldsymbol{m}+{y_0})$ are polynomials in $x$ of total degree $|\boldsymbol{m}+{y_0}|$.
This means that
$$m(\boldsymbol{m}+{y_0};n, x):=\int_{\Delta_K} g_{{\theta}}( z,\boldsymbol{m}+{y_0})Q_n( z, x){\cal D}(\dif{z}),$$
    where $Q_n$ are as in \eqref{eqn sub tf polydual}, is a polynomial in $ x$ of total degree $|\boldsymbol{m}+{y_0}|$ if $|\boldsymbol{m}+{y_0}|\geq n$, or equals zero if $|\boldsymbol{m}+{y_0}|<n.$ 
    So, using \eqref{fracsg} 
    $$\wt T_{t_1}g_\theta( \cdot,\boldsymbol{m}+{y_0})(x)=\sum_{n\leq|\boldsymbol{m}+\boldsymbol{y_0}|} \Phi_{t_1}(\lambda_n)m(\boldsymbol{m}+{y_0};n, x),$$
gives
    \begin{align}
        \label{eq poly 2pred}
        \nu_{1\vert 0:0}(\dif{x})=\sum_{\boldsymbol{m}}\sum_{n\leq|\boldsymbol{m}+\boldsymbol{y_0}|}[w^*_{\boldsymbol{m}}(\boldsymbol{y}_0) \Phi_{t_1}(\lambda_n)m(\boldsymbol{m}+{y_0};n, x)]\ {\cal D}(\dif{x}),
    \end{align}
     which is a finite sum if $\nu^*\in\F$.
\\
 \subsection{Filtering Markov subordinated Wright-Fisher priors}
\label{sec Markov filtering}
The following recursion is available for any {\em Markov}\   sWF $\widetilde{ X}$, reflecting the model's HMM structure (see \cite{Capp2005InferenceIH, GENONCATALOT2003}. For $i=0,\ldots,N,$ denote the $(i+1)$-th posterior predictive and the $i$-th posterior update distributions, respectively, by $\nu_{i+1\vert 0:i}(\dif {{x}}):=\mathbb{P}[\widetilde{X}_{i+1}\in \dif x\mid Y_{0:i}=y_{0:i}]$ and $\nu_{i\mid 0:i}(\dif {x}):=\mathbb{P}[\widetilde{X}_{i}\in \dif  x\mid Y_{0:i}=y_{0:i}]$. Define $\nu_{0|0:-1}\equiv \nu^*:={\cal L}(\widetilde{{X}}_0),$ the initial distribution of the signal. Then the filter solves the recursion
\begin{align}
    \label{Predictive distributions}
    \nu_{i+1\vert 0:i}(\dif {{x}}) &:=\, \int_{\Delta_K}{\nu_{i\mid 0:i}(\dif  u)\widetilde P_{t_{i+1}-t_i}( u,\dif {{x}})},\ \ \ \tag{\rm P}\\
    \label{Filtering distributions}
    \nu_{i\mid 0:i}(\dif {{x}}) &\propto\, f_{{{x}}}(y_i)\nu_{i\vert 0:i-1}(\dif {{x}}).\ \ \ \ \ \tag{\rm U}
\end{align}
where P stands for \lq\lq prediction step\rq\rq\ and U for \lq\lq update step.\rq\rq \ 
Proposition \ref{cor: base filter steps} shows that such distributions are effectively {\em probabilistic} mixtures in $\F$ and paves the way for the existence of computable filters for Markov sWFs, if it is possible to simulate exactly from the dual law $\wt{q}^{|\bs{\theta}|}_k(t)$. In this case, one only needs to iterate recursively, through \eqref{Filtering distributions}-\eqref{Predictive distributions}, the same steps of the proof of Proposition \ref{cor: base filter steps} to prove the following: 
\begin{cor}\label{cor: markov comput}
    Consider a HMM of the form \eqref{The Model}, where $\wt X\sim sWF({\theta},S,\nu^*)$ for a subordinator clock $S$. If $\nu^*\in\F$, then $\nu_i\in\F$ and $\nu_{i|0:i-1}\in\F$ for all $i\in\Z_+$. 
\end{cor}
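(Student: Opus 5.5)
The plan is an induction on $i$ along the filtering recursion \eqref{Filtering distributions}--\eqref{Predictive distributions}. Proposition \ref{cor: base filter steps} supplies the base case: if $\nu^*=\nu_{0|0:-1}\in\F$, then $\nu_{0|0:0}\in\F$ and $\nu_{1|0:0}\in\F$. For the inductive step it suffices to show two closure properties, for an arbitrary $\mu=\sum_{\boldsymbol m\in M} w_{\boldsymbol m}\D_{\theta+\boldsymbol m}\in\F$ with $M$ finite. First, the update operator $\mu\mapsto f_{\cdot}(y)\mu/\int f_{\cdot}(y)\dif\mu$ maps $\F$ into $\F$. Second, the prediction operator $\mu\mapsto \mu\widetilde P_t$ maps $\F$ into $\F$ for every $t\ge 0$. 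Because $S$ is a subordinator, $\widetilde X$ is Markov, so the recursion is exactly the HMM filter. The predictive $\nu_{i+1|0:i}$ is therefore obtained from $\nu_{i|0:i}$ by a single application of $\widetilde P_{t_{i+1}-t_i}$, with no dependence on earlier clock values.

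\emph{Update step.} With Multinomial$(m_i,x)$ emission, $f_x(y)\propto \prod_j x_j^{y_j}$. By Dirichlet--multinomial conjugacy, written in the form $\D_{\theta+\boldsymbol l}(x)=g(x,\boldsymbol l)\D_\theta(x)$, one has $f_x(y)\D_{\theta+\boldsymbol m}(\dif x)\propto p(y\mid\boldsymbol m)\,\D_{\theta+\boldsymbol m+y}(\dif x)$, where $p(y\mid\boldsymbol m)>0$ is the Dirichlet-multinomial probability. Hence the update of $\mu$ is $\sum_{\boldsymbol m\in M}\hat w_{\boldsymbol m}\D_{\theta+\boldsymbol m+y}$ with $\hat w_{\boldsymbol m}\propto w_{\boldsymbol m}p(y\mid \boldsymbol m)$. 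These weights are nonnegative and normalised, and there are still $|M|$ components, so the result lies in $\F$.

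\emph{Prediction step.} This is the crux, and it uses the duality of Proposition \ref{prop sub dual} and Proposition \ref{prop t-dual}. By reversibility (Dirichlet-reversibility of $\widetilde X$),
$$\int \D_{\theta+\boldsymbol n}(\dif u)\widetilde P_t(u,\dif x)=\D_\theta(\dif x)\,\widetilde T_t g(\cdot,\boldsymbol n)(x).$$
Duality then gives
$$\widetilde T_t g(\cdot,\boldsymbol n)(x)=\E[g(x,\widetilde Z(t))\mid\widetilde Z(0)=\boldsymbol n]=\sum_{\boldsymbol l\le\boldsymbol n}\wt q^*_{\boldsymbol n,\boldsymbol l}(t)\,g(x,\boldsymbol l).$$
Consequently
$$\D_{\theta+\boldsymbol n}\widetilde P_t=\sum_{\boldsymbol l\le \boldsymbol n}\wt q^*_{\boldsymbol n,\boldsymbol l}(t)\,\D_{\theta+\boldsymbol l}.$$
This is a finite sum because $\widetilde Z$ is non-increasing. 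The coefficients are transition probabilities, hence nonnegative and summing to one. Mixing over $\boldsymbol n\in M$ yields a finite probabilistic mixture indexed by $\{\boldsymbol l:\boldsymbol l\le \boldsymbol n \text{ for some } \boldsymbol n\in M\}$, which lies in $\F$.

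Combining the two steps, the induction runs as follows. Assume $\nu_{i|0:i-1}\in\F$. The update step gives $\nu_{i|0:i}\in\F$, and the prediction step then gives $\nu_{i+1|0:i}\in\F$, for all $i$. The main point to check is the justification of the identity $\D_{\theta+\boldsymbol n}\widetilde P_t=\sum_{\boldsymbol l}\wt q^*_{\boldsymbol n,\boldsymbol l}(t)\D_{\theta+\boldsymbol l}$. This rests on the exchange of integration used in \eqref{thm general sub transition function} and on the nonnegativity of the dual weights, which holds because they are probabilities of the time-changed dual. The same identity can be read off directly from \eqref{eq wtq}. If desired, positivity can alternatively be confirmed by conditioning on $S(t)$ and using the corresponding WF statement for $p_{S(t)}$ together with \eqref{thm general sub transition function}.
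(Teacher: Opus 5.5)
Your proposal is correct and follows essentially the same route as the paper, which proves the corollary by iterating the recursion (U)--(P) with exactly the steps of the proof of Proposition \ref{cor: base filter steps}: Dirichlet--multinomial conjugacy for the update, and Dirichlet-reversibility combined with the time-changed duality of Propositions \ref{prop sub dual}--\ref{prop t-dual} for the prediction. Your statement of the two closure properties of $\F$, and of the finiteness and nonnegativity of the dual weights $\wt q^*_{\boldsymbol n,\boldsymbol l}(t)$, spells out explicitly what the paper leaves implicit.
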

\subsubsection{Algorithms}\label{sec alg}
The prediction-filtering and smoothing algorithms for the model \eqref{The Model} are presented in the Appendix Section A-2
as Algorithms 2 and 3, respectively. Note that no sampling from the clock is required as long as the Laplace transforms $\Phi_t(\cdot)$ of $S(t)$ is tractable. Instead, the Markov sWF's dual process is used, which is analytically available in closed form from Proposition \ref{prop t-dual} (in particular, \eqref{eq wtq}-\eqref{eq:wtlambda}).

\subsection{Posterior calculus for non-Markovian sWF priors} \label{sec frac posterior}
For sWF$({\theta},R,\nu^*)$ priors with inverse subordinator clock $C=R,$ the HMM structure of the model no longer holds, so the recursion \eqref{Predictive distributions}-\eqref{Filtering distributions} cannot be used directly.
However it is possible to augment the state space of the signal, to $\Delta_K\times\mathbb R_+$: the augmented signal will be defined as
$$V(t)=( X(R(t)),R(t))=(\wt X(t), R(t)), ~t\geq 0.$$
 The original generalised fractional signal $\wt{ X}$ can be recovered as the marginal distribution of the first coordinate of $V$. In this augmented model, the emission density (conditional likelihood) of the data does not depend on the clock except through $X(R(t))$ i.e. $f_{V(t)}(y)=f_{X(R(t))}(y).$
 While $\wt{ X}$ alone is non-Markovian, the joint process $V$ is a Markov process. 
 \begin{prop}\label{prp vsemigroup}
     The signal $V$ is a Markov process with 
 time-inhomogeneous conditional expectation operator (semigroup) ${\cal T}^V_{s,t}$ given by, for every $s<t$,
 \begin{align}
     {\cal T}^V_{s,t}f( x, r)&=\int_0^\infty \left({\cal T}^{WF}_{u}f(\cdot, r+u)( x)\right) \wt G_{s,t}(r, du),\label{eq: S-sg}
 \end{align}\\
where $\wt G_{s,t}(r,A):=\mathbb{P}\left[R(t)-r\in A\mid R(s)=r\right]$ for all $s,r,y\in\mathbb{R}_+, ~A\in {\cal B}(\mathbb{R}_+)$ and $({\cal T}^{WF}_t)$ is the semigroup of the Wright-Fisher diffusion $ X$.
\end{prop}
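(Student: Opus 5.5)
The plan is to show that $V$ is Markov with respect to its own natural filtration (enlarged by that of $R$) and to compute its conditional expectation operator by conditioning successively on the clock and on the WF path. The single structural input is that $R$, although non-Markovian as a process in $t$ alone, becomes Markov once its position at time $s$ is recorded. More precisely, the pair $(R(s),\text{overshoot of }S\text{ beyond }s)$ is Markov, and more generally $R$ is Markov with respect to the filtration generated by the undershoot/overshoot of $S$. With the convention that the kernel $\wt G_{s,t}(r,\cdot)$ in the statement is the conditional law of the increment $R(t)-R(s)$ given the relevant past, which the statement encodes through its dependence on $r=R(s)$, the remaining steps are essentially Fubini plus the Markov property of $X$.

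\textbf{Step 1 (condition on the clock).} Fix $s<t$ and a bounded measurable $f$ on $\Delta_K\times\R_+$. Let $\mathcal H_s$ be the $\sigma$-algebra generated by $(X(R(u)),R(u))_{u\le s}$, together with the $\sigma$-algebra of $S$ up to the first passage time $R(s)$. Write
$$f(V(t))=f\bigl(X(R(s)+\Delta),\,R(s)+\Delta\bigr),\qquad \Delta:=R(t)-R(s).$$
Conditionally on the whole path of $R$, the variables $R(s)=r$ and $\Delta=u$ are deterministic, because $X$ is independent of $R$.

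\textbf{Step 2 (Markov property of $X$).} Since $R(s)$ is independent of $X$, it acts as a random time independent of $X$. The simple Markov property of $X$ at the deterministic time $r$, applied after conditioning on $R$, gives
$$\E\bigl[f(X(r+u),r+u)\,\big|\,X(v),v\le r\bigr]={\cal T}^{WF}_u f(\cdot,r+u)\bigl(X(r)\bigr).$$
The past values $X(R(v))$, $v\le s$, are functions of $(X(v))_{v\le r}$ and of $R$. Hence, conditionally on $\mathcal H_s$ and on $\Delta=u$, the conditional expectation equals ${\cal T}^{WF}_u f(\cdot,R(s)+u)(\wt X(s))$.

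\textbf{Step 3 (integrate out $\Delta$).} By independence of $X$ and $S$, the conditional law of $\Delta$ given $\mathcal H_s$ is the law of $\Delta$ given the clock's past, which is $\wt G_{s,t}(R(s),du)$. Integrating yields exactly \eqref{eq: S-sg} evaluated at $(\wt X(s),R(s))$. Because the result depends on the past only through $V(s)$ (plus the clock information absorbed into $\wt G$), $V$ is Markov. The kernel depends on $(s,t)$ and not only on $t-s$, since $R$ lacks stationary increments; this is why the semigroup is time-inhomogeneous.

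\textbf{Step 4 (Chapman–Kolmogorov).} Verify ${\cal T}^V_{s,t}{\cal T}^V_{t,w}={\cal T}^V_{s,w}$ by combining the semigroup property of ${\cal T}^{WF}$, ${\cal T}^{WF}_u{\cal T}^{WF}_{u'}={\cal T}^{WF}_{u+u'}$, with the Chapman–Kolmogorov identity for $\wt G$ (convolution of increments), interchanging integrals by boundedness.

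I expect the main obstacle to be Step 3, namely justifying that $\wt G_{s,t}(r,\cdot)$ is a genuine Markov kernel in $r$. Strictly, $R$ is Markov only jointly with the overshoot $S(R(s))-s$. My first attempt would be to use the strong Markov property of $S$ at the stopping time $R(s)$: after this time, $S(R(s)+\cdot)-S(R(s))$ is a fresh copy of $S$ independent of the past. This gives the law of $\Delta$ explicitly in terms of the overshoot distribution, whose law given $R(s)=r$ is computable from $\mu^r$ and $\pi$, and so $\wt G_{s,t}(r,du)$ is obtained as that mixture. If a true Markov property in $V$ alone is required, I would note that the kernel is well defined as the regular conditional distribution given $R(s)=r$, and state Markovianity with respect to the filtration including the overshoot.
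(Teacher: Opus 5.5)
Your Step 3 does not go through, and the problem is structural rather than technical. Your starting premise, that $R$ becomes Markov once $R(s)$ is recorded, is false, as your own obstacle paragraph indicates. By the strong Markov property of $S$ at $R(s)$, the conditional law of $\Delta=R(t)-R(s)$ given $\mathcal H_s$ is $\delta_0$ on $\{O_s>t-s\}$ and the law of $R'(t-s-O_s)$ otherwise, where $O_s=S(R(s))-s$ and $R'$ is an independent copy of $R$. That is not $\wt G_{s,t}(R(s),\cdot)$ as the statement defines it. Averaging over $O_s$ given $R(s)=r$ does give $\wt G_{s,t}(r,\cdot)$. Combined with your Steps 1--2 and the conditional independence of $\Delta$ and $X(R(s))$ given $R(s)$ (because $X$ is independent of $R$), this proves that \eqref{eq: S-sg} equals $\E[f(V(t))\mid V(s)=(x,r)]$. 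It proves nothing more. The natural past $\mathcal F^V_s$ contains the path of $R$ on $[0,s]$, hence the age $A_s=s-S(R(s)-)$ of the current constancy interval of $R$. The law of $O_s$ given $(R(s),A_s)$ depends on $A_s$; for $\beta=0$ it is $\pi(A_s+\dif v)/\bar\pi(A_s)$. So the last sentence of Step 3 contradicts itself: if clock information beyond $R(s)$ must be absorbed into the kernel, the kernel is not a function of $V(s)$.

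Step 4 fails for the same reason, and it is exactly the paper's entire proof. The paper verifies ${\cal T}^V_{s,u}{\cal T}^V_{u,t}={\cal T}^V_{s,t}$ by taking $\int_0^\infty\wt G_{u,t}(r+y,\dif z)\,\wt G_{s,u}(r,\dif y)=\wt G_{s,t}(r,\dif z)$, read in terms of increments, for granted. Testing on $f(x,r)=h(r)$ shows this identity is equivalent to Chapman--Kolmogorov for ${\cal T}^V$, i.e.\ to a Markov property of $R$, and it fails for genuine inverse subordinators. Take the inverse $\alpha$-stable clock. The joint law $\Prb[R(s)\in\dif r,\,S(R(s)-)\in\dif x,\,S(R(s))\in\dif y]=\dif r\,\mu^r(\dif x)\,\pi(\dif y-x)$, $x\le s<y$, gives $\wt G_{s,t}(r,\{0\})=F(s,t)/F(s,s)$ with $F(a,b):=\int_{[0,a]}(b-x)^{-\alpha}\mu^r(\dif x)$. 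Since $x\mapsto((u-x)/(t-x))^{\alpha}$ is strictly decreasing and $\mu^r$ charges both $[0,s]$ and $(s,u]$, for $s<u<t$ one gets
\[
\wt G_{s,t}(r,\{0\})>\wt G_{s,u}(r,\{0\})\,\wt G_{u,t}(r,\{0\}),
\]
whereas the identity forces equality, since only $y=0$ contributes to the atom at $0$. Intuitively, having been constant since time $s$ makes a long constancy interval more likely.

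By the tower property, the operators $\E[f(V(t))\mid V(s)]$ of a Markov process must satisfy Chapman--Kolmogorov. So $V$ is in fact not Markov for such clocks, and the statement cannot be proved as written, by your route or by the paper's. What your strong-Markov argument at $R(s)$ does establish is the natural replacement: the augmented process $(\wt X(t),R(t),A(t))$ (or with $O_t$ in place of $A_t$) is Markov. Its transition operator is obtained exactly as in your Steps 1--3, with $\wt G_{s,t}(r,\cdot)$ replaced by the conditional law of $(\Delta,A(t))$ given $A(s)$.
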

The generator ${\cal L}^S$ of $S$ is given, for smooth test functions of the form $h( x, r)=g( x)\psi(r),$ as 
$ 
 {\cal L}^S_t h( x, r)={\cal G}_t\left(\psi(\cdot)\ {\cal T}^{WF}_tg(\cdot)\right)( x, r),
$ 
 where ${\cal G}_t$ is the (time-inhomogeneous) generator of the clock $R$.\\
Let $\bar\nu^*:=\nu^*\times\delta_{0}$ denote the initial distribution of $V$. The model, with the augmented signal $V$, is now again a HMM and the filtering distributions will solve a time-inhomogeneous version of the recursion \eqref{Predictive distributions}-\eqref{Filtering distributions}. Since we now have to distinguish between physical time $t$ and operational time $R(t)$, it will be convenient to incorporate explicitly the time values in the notation for the predictive and update step of the posterior distributions as follows: 
$$\nu^V_{j|0:i}(\cdot \mid t_{0:i})=\mathbb{P}\left[V(t_j)\in \cdot \mid \{Y(t_n)=y_n: n=0,\ldots, i\}\ \right ], \ \ \ j\in\{i,i+1\}$$
(corresponding to \eqref{Filtering distributions} for $j=i$ and to \eqref{Predictive distributions} for $j=i+1$) and similarly
\begin{align*}
&\nu^{WF}_{j|0:i}(\cdot \mid r_{0:j})=\mathbb{P}\left[{ X}(R(t_j))\in \cdot\mid \bigcap_{n=1}^i
\{Y(t_n)=y_n\}\ \cap \bigcap_{n=1}^j\{\ R(t_n)=r_n\}: \right ],\end{align*}
for $j\in\{i,i+1\}.$
Conditionally on $\{R(t_n)=r_n:n=0,\ldots, i+1\},$ $\nu^{WF}_{i|0:i}$ and $\nu^{WF}_{i+1|0:i}$ coincide, respectively, with the $i$-th update and the $(i+1)$-th predictive posteriors, conditional on data $y_{0:i}$ \lq\lq collected at operational times\rq\rq\ $r_1,\ldots,r_i$.\\
The marginal likelihoods of the \lq\lq next sample\rq\rq\ given the previous data, when the signal is given by $V$ or by the Wright-Fisher diffusion $ X$, respectively, will be denoted by
$$m^{V}_{i+1\mid 0:i}(y\mid   t_{1:i+1})=\int_{\Delta_K}f_{ x}(y)\ \nu^{V}_{i+1\mid 0:i}(\dif x),$$
\begin{align*}
    m^{WF}_{i+1\mid 0:i}(y\mid  r_{1:i+1})= \int_{\Delta_K}f_{ x}(y)\ \nu^{WF}_{i+1\mid 0:i}(\dif x\mid r_{1:i+1}),
\end{align*}where it should be recalled that $f_{\boldsymbol{x}}(y)$ does not depend on the clock except through $X(R(t))=x$.
We will denote with $G_{t_1,\ldots,t_i}(\cdot)$ the finite-dimensional marginal distribution of $(R(t_1),\ldots,R(t_i))$ and define the posterior finite-dimensional distributions:
$$G^*_{t_1,\ldots,t_i}(\dif r_{1:i}\mid y_{0:i})=\mathbb{P}\left[R(t_1)\in\dif r_1,\ldots,R(t_i)\in\dif r_{i}\mid Y_{0:i}=y_{0:i}\right]$$
Finally, $m^V(y_{0:i}\mid t_{0:i})$ and $m^{WF}(y_{0:i}\mid u_{0:i})$ will denote the respective joint marginal distributions of the whole data vector $Y_{0:i}$ in either model, arising from repeated composition of $m^{WF}_{i+1\mid 0:i}$ or $m^{WF}_{i+1\mid 0:i},$ respectively.
\begin{prop}\label{prp: i-th v-posterior}
    Let $V=(X\circ R, R)$ for $X\sim WF({\theta},\nu^*)$ and $R$ be an inverse subordinator, independent of $X$. Consider a HMM $(Y_{0:N}, V)$ where the likelihood for each $Y_i=Y(t_i)$, conditionally on $V(t_i)=(x, r)$ is $f_{ x}(y_i)=\mathcal{M}(y_i;  x,{|y_i|)}$. Then, for every $i=1,\ldots$,
    \begin{itemize}
    \item[(i)] the $i$-th predictive distribution is given by
          \begin{align}\label{eq:i-th v-pred}
             \ &\nu^{V}_{i\mid 0:i-1}(\dif ( x, r))
             =\int_{\mathbb{R}_+^{i-1}}\nu^{WF}_{i\mid 0:i-1}(\dif  x\mid r_{{1:i}})\G^*_{t_{1:i-1}}(\dif  r_{1:i-1}\mid y_{0:i-1})\ G_{t_{i-1},t_i}(\dif r_i\mid r_{i-1});
          \end{align}
        \item [(ii)]
 the $(i)$-th update posterior distribution take the form:
    \begin{align}
    \label{eq:ith v-update}
    \nu^V_{i\mid 0:i}(d(x, r))&= \int_{{\mathbb R}^{i-1}_+} G^*_{t_1,\ldots,t_i}(\dif  r_{1:i-1},\dif r\mid y_{0:i})\ \nu^{WF}_{i\mid 0:i}(\dif  x\mid  r_{1:i}).
    \end{align}
          \item[(iii)] The posterior finite-dimensional distributions of the clock $R$ are
    \begin{align}
     G^*_{t_1,\ldots,t_i}(\dif r_{1:i}\mid y_{0:i})&= 
     G^*_{t_1,\ldots,t_{i-1}}(\dif r_{1:i-1}\mid y_{0:i-1}) G_{t_{i-1},t_i}(r_{i-1},\dif r_i)\  \notag\\
     &\times \frac{m^{WF}_{i\mid 0:i-1}(y_i\mid y_{0:i-1}; r_{1:i})}{m^{V}_{i\mid 0:i-1}(y_i\mid y_{0:i-1}; t_{1:i})} \label{eq:i-th frac marg}
    \end{align}
    \end{itemize}
    \end{prop}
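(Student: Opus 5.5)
The approach is to treat $(Y_{0:N},V)$ as a time-inhomogeneous HMM with Markov signal $V$, which is legitimate by Proposition \ref{prp vsemigroup}. The posteriors of $V$ are then disintegrated into a conditional law for the first coordinate given the whole clock path $r_{1:i}$, and a posterior law for the clock path itself. The key structural fact is that, conditionally on $\{R(t_n)=r_n\}_{n\le i}$, independence of $X$ and $R$ makes $(X(R(t_n)))_n$ distributed as the WF diffusion observed at the deterministic operational times $r_1<\dots<r_i$. Moreover, the emission density $f_x(y)$ depends on $V$ only through $x$. Hence the conditional law of $(Y_{0:i},\widetilde X(t_i))$ given $R(t_{1:i})=r_{1:i}$ is exactly that of the ordinary WF HMM \eqref{The WF Model} sampled at times $r_{1:i}$. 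Its filtering objects are $\nu^{WF}_{j|0:i}(\cdot\mid r_{1:j})$ and $m^{WF}_{i|0:i-1}$.

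The plan is to work with the joint law of $(Y_{0:i}, X(R(t_{0:i})), R(t_{1:i}))$, written as
\[
\bar\nu^*(\dif x_0)\,G_{t_{1:i}}(\dif r_{1:i})\prod_{n=1}^i p_{r_n-r_{n-1}}(x_{n-1},\dif x_n)\prod_{n=0}^i f_{x_n}(y_n).
\]
This factorisation uses \eqref{eq: S-sg} iteratively together with the Markov property of $R$, so that $G_{t_{1:i}}(\dif r_{1:i})=\prod_n G_{t_{n-1},t_n}(r_{n-1},\dif r_n)$.

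First I will integrate out $x_{0:i-1}$ and apply Bayes' formula. This gives the predictive statement (i). The clock increment $G_{t_{i-1},t_i}(\dif r_i\mid r_{i-1})$ is unaffected by $y_{0:i-1}$ given $r_{i-1}$. The marginal of $r_{1:i-1}$ given $y_{0:i-1}$ is $G^*_{t_{1:i-1}}$ by definition. The conditional law of $x_i$ given $(r_{1:i},y_{0:i-1})$ is the WF predictive $\nu^{WF}_{i|0:i-1}(\cdot\mid r_{1:i})$. The last point follows by comparing the integrand with the WF recursion (P)--(U) at operational times.

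Next, conditioning additionally on $y_i$ multiplies the joint density by $f_{x_i}(y_i)$. Splitting the normaliser gives the update statements (ii) and (iii). The normalised $x_i$-part is $f_{x}(y_i)\nu^{WF}_{i|0:i-1}(\dif x\mid r_{1:i})/m^{WF}_{i|0:i-1}(y_i\mid r_{1:i})=\nu^{WF}_{i|0:i}(\dif x\mid r_{1:i})$, which is the WF update step. The remaining factor on the clock path is
\[
G^*_{t_{1:i-1}}(\dif r_{1:i-1}\mid y_{0:i-1})\,G_{t_{i-1},t_i}(r_{i-1},\dif r_i)\,m^{WF}_{i|0:i-1}(y_i\mid r_{1:i}),
\]
normalised by its total mass. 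That total mass equals $\int f_x(y_i)\nu^V_{i|0:i-1}(\dif(x,r))=m^V_{i|0:i-1}(y_i\mid t_{1:i})$ by (i), and this is exactly (iii). Substituting (iii) into the disintegration and marginalising $r_{1:i-1}$ then yields (ii), with $r=r_i$. An induction on $i$ ties everything together. The base case $i=1$ uses $\bar\nu^*=\nu^*\times\delta_0$ and the fact that $\nu^V_{0|0:0}$ is the WF update at $r_0=0$, as in Proposition \ref{cor: base filter steps}.

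The main obstacle I anticipate is rigorously justifying the conditional-independence claim: that conditioning on the clock path reduces to the WF HMM at operational times. This has to be done with $R$ non-Markov in the natural filtration but Markov (time-inhomogeneous) as a coordinate of $V$. It also requires regular conditional distributions on $\mathbb R_+^i$ and Fubini exchanges. I would handle this via the product structure of the independent pair $(X,R)$, first verifying the identity on test functions $g(x)\psi(r)$ as in the description of ${\cal L}^S$, and then extending by a monotone class argument.
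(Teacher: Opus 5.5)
Your argument is correct under the paper's standing assumptions. It uses the same ingredients as the paper: independence of $X$ and $R$, identification of the filter given the clock values with the WF filter at operational times, and reading the clock posterior off the Bayes normaliser. The organisation differs. The paper proves $i=1$ as a lemma and then inducts, pushing the inductive update posterior through the kernel \eqref{eq: S-sg} of $V$ and getting (ii) and (iii) from a single Bayes step. You instead disintegrate the whole joint law along the clock path. This actually makes the induction superfluous, since $G^*_{t_{1:i}}(\dif r_{1:i}\mid y_{0:i})\propto G_{t_{1:i}}(\dif r_{1:i})\,m^{WF}(y_{0:i}\mid r_{1:i})$ and (iii) is then just the chain rule for $m^{WF}$ and $m^{V}$.

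Your route buys a clean separation of hypotheses. The joint-law factorisation and the reduction to a WF HMM at the operational times follow from independence alone, because the conditional law of $X\circ R$ given $R=\rho$ is that of $X\circ\rho$. (Those operational times can coincide, since $R$ has flat stretches, so write $r_1\le\dots\le r_i$.) The ``main obstacle'' you anticipate is therefore routine and needs neither \eqref{eq: S-sg} nor a monotone-class argument.

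A Markov property enters only when you replace the law of $R(t_i)$ given $R(t_{1:i-1})=r_{1:i-1}$ by $G_{t_{i-1},t_i}(r_{i-1},\dif r_i)$. Your closing remark there is inconsistent. Put $f(x,r)=\varphi(r)$ in \eqref{eq: S-sg} and use the tower property: if $V$ is Markov with that kernel, then $R$ is Markov in its own filtration. So ``non-Markov in its natural filtration but Markov as a coordinate of $V$'' cannot be the resolution.

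State plainly that $G_{t_{1:i}}(\dif r_{1:i})=\prod_n G_{t_{n-1},t_n}(r_{n-1},\dif r_n)$ is a hypothesis inherited from Proposition \ref{prp vsemigroup}, and that the form of (iii) presupposes it. Inverse subordinators are in general not Markov: the evolution of $R$ after $t$ depends on the overshoot $S(R(t))-t$, which $R(t)$ alone does not determine. It is worth noting that your disintegration proves the correct general version verbatim, with that kernel replaced by the conditional law of $R(t_i)$ given $R(t_{1:i-1})$. The paper's recursive route through the $V$-kernel does not deliver this.
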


   From Proposition \ref{prp: i-th v-posterior}, it becomes clear that the posterior (update and predictive) distributions for $\wt{ X}(t_i)$, arise themselves as mixtures of the corresponding WF diffusion update and filtering distributions, calculated at operational times drawn from the {\em posterior} finite-dimensional distribution $G^*_{\bullet}$ of the clock, given the data. In particular, if the initial marginal distribution of $\wt{X}(0)$ is a Dirichlet (i.e. $\bar\nu^*={\cal D}_{\theta}\times\delta_{\{0\}}$), then both $\nu^V_{i\mid 0:i}(\dif x,\R_+)$ and $\nu^V_{i+1\mid 0:i}(\dif x,\R_+)$ become mixtures of computable posteriors (i.e. mixture of finite mixtures of posterior Dirichlet). Indeed, let us recall that
 for $j\in\{0,1\},$ and for fixed times $r_{1:i+j}$,
    \begin{align*}
\nu^{WF}_{i+j|0:i}(\dif x\mid{r_{1:j}})=
\sum_{\bs m\in C(y_{0:i})}w_{\bs m}(r_{i+j}\mid y_{0:i})\ {\cal D}_{{\theta}+\bs{m}}(\dif x)
\end{align*}
where $C(y_{0:i})\subset\Z_+^K:|C(y_{0:i})|<\infty,$ as seen in \cite{Papaspiliopoulos_2014} and here in Section \ref{sec Markov filtering}. Then, 
by independence of $R$ and $ X$, a simple application of Fubini's Theorem implies the following.
\begin{cor} Assume the same assumptions as Proposition \ref{prp: i-th v-posterior}. Let the initial distribution of $\wt{ X}$ be $\nu^*=\eta_{\theta}\times\delta_{\{0\}}, $ for $\eta\in\F$. Then, for every $i=0,\ldots,N$, and $j\in\{0,1\},$
 \begin{align}
    \nu^V_{i+j\mid 0:i}(\dif x,\R_+)=\sum_{\bs m}w^*_{\bs m}(t_{i+j}\mid y_{0:i}; t_{1:i+j-1})\ {\cal D}_{{\theta}+\bs{m}}(\dif x)
 \end{align}
 where $w^*_{\bs m}(t_{i+j}\mid y_{0:i}; t_{1:i+j-1})=\E_{t_1,\ldots,t_{i+j}}^*\left[w_{\bs m}(R(t_{i+j})\mid y_{0:i})\right]$ with $\E_{t_1,\ldots,t_{i+j}}^*$ being the expectation operator under the distribution $G^*_{t_1,\ldots,t_{i+j}}.$
 \end{cor}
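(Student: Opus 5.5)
The plan is to condition on the clock and apply the known Wright-Fisher filtering result pathwise, then average over the posterior of the clock. The statement concerns the $\Delta_K$-marginal of $\nu^V_{i+j\mid 0:i}$. By Proposition \ref{prp: i-th v-posterior}, both the update ($j=0$, formula \eqref{eq:ith v-update}) and the predictive ($j=1$, formula \eqref{eq:i-th v-pred}) distributions are mixtures of $\nu^{WF}_{i+j\mid 0:i}(\dif x\mid r_{1:i+j})$ over a probability measure on the operational times $r_{1:i+j}$. For $j=0$ this measure is $G^*_{t_{1:i}}(\cdot\mid y_{0:i})$. For $j=1$ it is the product $G^*_{t_{1:i}}(\dif r_{1:i}\mid y_{0:i})\,G_{t_i,t_{i+1}}(r_i,\dif r_{i+1})$, which I will also denote $G^*_{t_1,\ldots,t_{i+1}}$, with the convention that no new data arrive at $t_{i+1}$. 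Setting $r=\R_+$ in the second coordinate therefore reduces the claim to an integral over $r_{1:i+j}$ of the WF posteriors.

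First step: fix $r_{1:i+j}$. Conditionally on $R(t_n)=r_n$, the data $y_{0:i}$ are observations of the WF diffusion $X$ at the deterministic times $0<r_1\le\dots\le r_{i+j}$. I will invoke the result of \cite{Papaspiliopoulos_2014}, which is Corollary \ref{cor: markov comput} with the identity clock, applied with initial law $\eta\in\F$. Iterating the update and prediction steps gives
\[
\nu^{WF}_{i+j\mid 0:i}(\dif x\mid r_{1:i+j})=\sum_{\bs m\in C(y_{0:i})} w_{\bs m}(r_{1:i+j}\mid y_{0:i})\,{\cal D}_{\theta+\bs m}(\dif x).
\]
The index set $C(y_{0:i})$ is finite and depends only on the data and on the support of $\eta$, not on the $r$'s: it is the set of $\bs m$ lying below $\bs n+y_0+\dots+y_i$ for $\bs n$ in the support of $\eta$. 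This holds because the dual is non-increasing. The weights are probability weights that are measurable in $r$, since they are built from the dual transition probabilities $q^*_{\bs m,\bs l}(r_{n}-r_{n-1})$, which are continuous in their time argument. I expect that the weights actually depend on the whole vector $r_{1:i+j}$ and not only on $r_{i+j}$; this is harmless.

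Second step: integrate against the clock posterior. Because the sum over $\bs m$ is finite and every term is nonnegative and bounded by $1$, Fubini/Tonelli lets me exchange the integral over $r_{1:i+j}$ with the sum and with the integral in $x$. This gives
\[
\nu^V_{i+j\mid 0:i}(\dif x,\R_+)=\sum_{\bs m}\Big(\int w_{\bs m}(r_{1:i+j}\mid y_{0:i})\,G^*_{t_1,\ldots,t_{i+j}}(\dif r_{1:i+j})\Big)\,{\cal D}_{\theta+\bs m}(\dif x).
\]
The bracket is exactly $w^*_{\bs m}=\E^*_{t_1,\ldots,t_{i+j}}[w_{\bs m}(R(t_{1:i+j})\mid y_{0:i})]$. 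These weights are nonnegative and sum to $1$, so the right-hand side is again a finite mixture in $\F$.

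The main obstacle is the first step, specifically two points. One must check that the finite support $C(y_{0:i})$ is uniform in the random operational times, so that the mixture stays finite after averaging. One must also handle ties $r_n=r_{n-1}$, which occur with positive probability for an inverse subordinator because $R$ is flat on intervals. For ties I plan to note that the WF transition at lag $0$ is the identity, and the dual weights $q^*_{\bs m,\bs l}(0)=\delta_{\bs m,\bs l}$ extend continuously, so the pathwise formula remains valid there. Measurability of $r\mapsto w_{\bs m}$ then follows from continuity of $q_{m,k}(s)$ in $s\ge 0$.
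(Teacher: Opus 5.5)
Your proposal is correct and follows the paper's argument: the paper likewise combines the mixture representations \eqref{eq:i-th v-pred}--\eqref{eq:ith v-update} with the finite-Dirichlet-mixture form of the Wright--Fisher filter at fixed operational times from \cite{Papaspiliopoulos_2014}, and then exchanges the finite sum with the integral over the clock posterior by Fubini. Your extra checks make explicit details the paper leaves implicit: that the index set $C(y_{0:i})$ does not depend on $r_{1:i+j}$, the handling of ties $r_n=r_{n-1}$, measurability of the weights, and the fact that $w_{\bs m}$ depends on the whole vector $r_{1:i+j}$ and not only on $r_{i+j}$, as the paper's notation suggests.
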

 The tractability of the mixture weights depends on the tractability of $G^*_{\bullet}$ thus, ultimately, on the tractability of the {\em prior} finite-dimensional distributions $G_{t_1,\ldots,t_{i+j}}$ of the clock $R$. Indeed, we remark that both the numerator and the denominator of the fractions appearing in \eqref{eq:i-th frac marg} are finite mixtures of Dirichlet-Multinomial distributions. Thus, regarding \eqref{eq:i-th frac marg} as
 $$
 G^*_{t_1,\ldots,t_i}(\dif r_{1:i}\mid y_{0:i})= G_{t_1,\ldots,t_{i}}(\dif r_{1:i}) \ \frac{m^{WF}_{0:i} (y_{0:i}\mid r_{1:i})}{m^{V}_{0:i}(y_{0:i}\mid  t_{1:i})} \ 
 $$
suggests that, exact algorithms can be obtained for sampling from the posterior $G^*_{t_1,\ldots,t_{i}}$ whenever they exist for $G_{t_1,\ldots,t_i}$. \\
While writing this paper, we have become aware of a new preprint \cite{toaldo25}, providing exact sampling algorithms for $G_{t_1,\ldots,t_i}$ when $R$ is the inverse of a strictly increasing subordinator, encompassing several notable instances (e.g. inverse stable subordinators and inverse gamma subordinators). Thus for at least this family, all the posterior distributions of Proposition \ref{prp: i-th v-posterior} can be sampled from exactly, in finite time.
One simple example is the rejection sampler (Algorithm \ref{alg: rejection sampler for G*}) below, which uses $G_{t_1:t_i}$ as proposal and $m^{WF}_{0:i}$ as rejection probability. Several other exact algorithms can be obtained at least as efficient as Algorithm \ref{alg: rejection sampler for G*}.
Faster approximate methods can be also devised, built upon MCMC or other  algorithms for $G$ and the complexity of the algorithms for $G^*_\bullet$ should be comparable to the complexity of those for $G_{\bullet}$. A full derivation and study of such algorithms is beyond the scope of the present paper. \\

\begin{algorithm}[hbt!]
\caption{Rejection sampler for $G^*_{t_{1:n}}(\dif r_{1:i} | y_{0:i})$}
\label{alg: rejection sampler for G*}
\begin{algorithmic}[1]
\Repeat
    \State Sample $R \sim G_{t_{1:i}}$
    \State Sample $U \sim \mathrm{Uniform}(0,1)$
\Until{$U \le m^{WF}(y_{0:i}|R)$}
\State \Return $R$
\end{algorithmic}
\end{algorithm}

 \bibliographystyle{plain}
 \bibliography{refs}

\newpage

\appendix

\setcounter{equation}{0}
\renewcommand{\theequation}{A-\arabic{equation}}
\renewcommand{\thesection}{A-\arabic{section}}
\renewcommand{\thethm}{\thesection.}
\renewcommand{\theprop}{\thesection}
\renewcommand{\thelem}{\thesection}
\renewcommand{\thealgorithm}{\thesection}
\renewcommand{\thecor}{\thesection}
\setcounter{section}{0}
\setcounter{figure}{0}
\setcounter{algorithm}{0}
\setcounter{lem}{0}
\setcounter{cor}{0}
\setcounter{prop}{0}

\setcounter{page}{1}

\FloatBarrier
    \title{Appendix. Subordinated Wright-Fisher priors}

\section{Additional plots}

\begin{figure}[htp]
\centering
\includegraphics[width=\linewidth, scale=0.3]{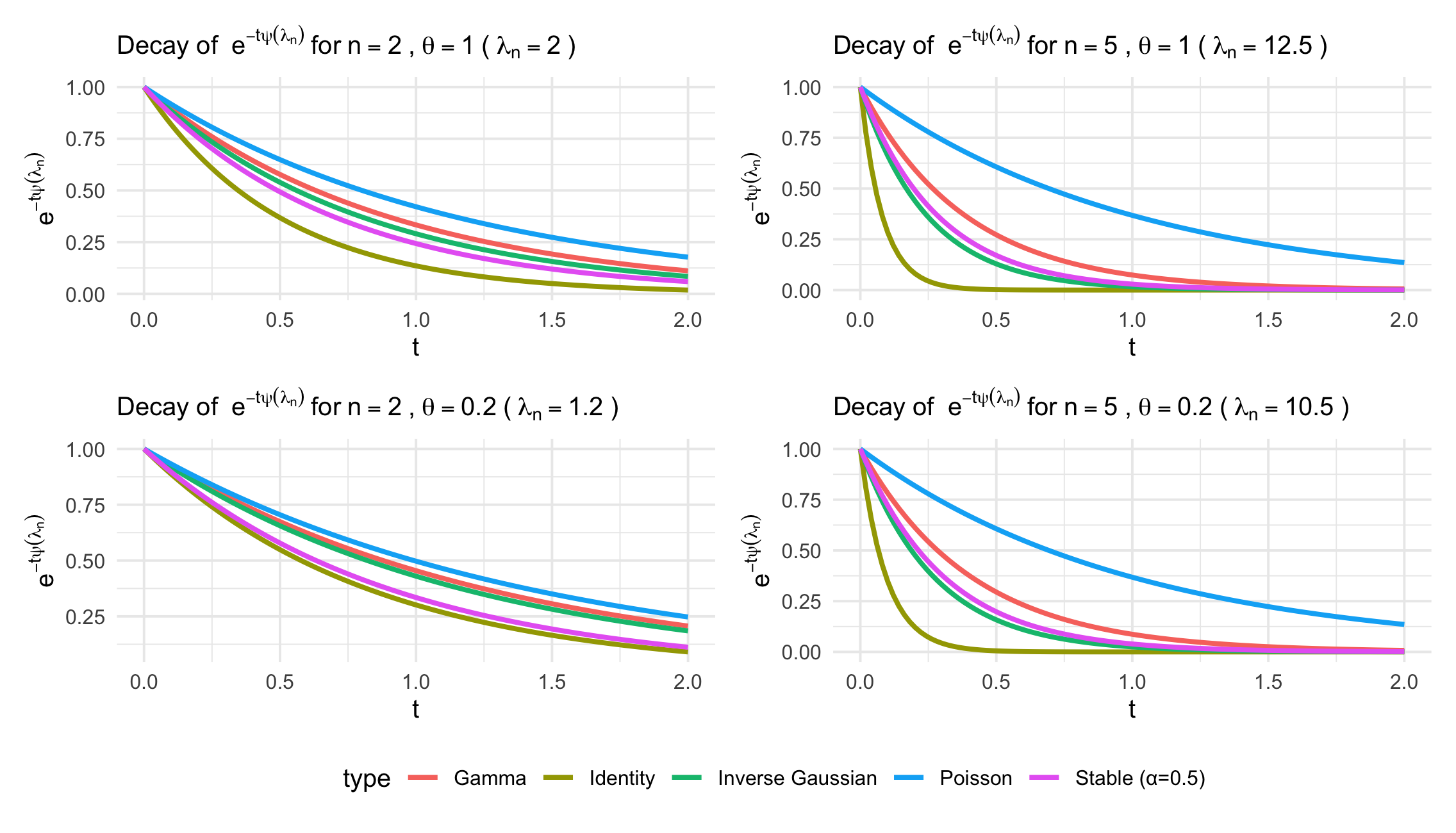}
  \caption{
Laplace transform maps $t\rightarrow \E[e^{-\lambda_n C(t)}]$ across the range $t\in [0,2]$ and for a selection of $(n,\theta)$ and subordinators: (a) $\alpha$-Stable with $\alpha = 0.5$;(b) Poisson $c=1$; (c) Inverse Gaussian, $\delta=\gamma=1$; (d) Gamma $a=b=1$; (e) identity.}
  \label{fig: Laplace Transform Maps}
\end{figure}

\begin{figure}
    \centering
    \includegraphics[width=\linewidth, scale=0.3]{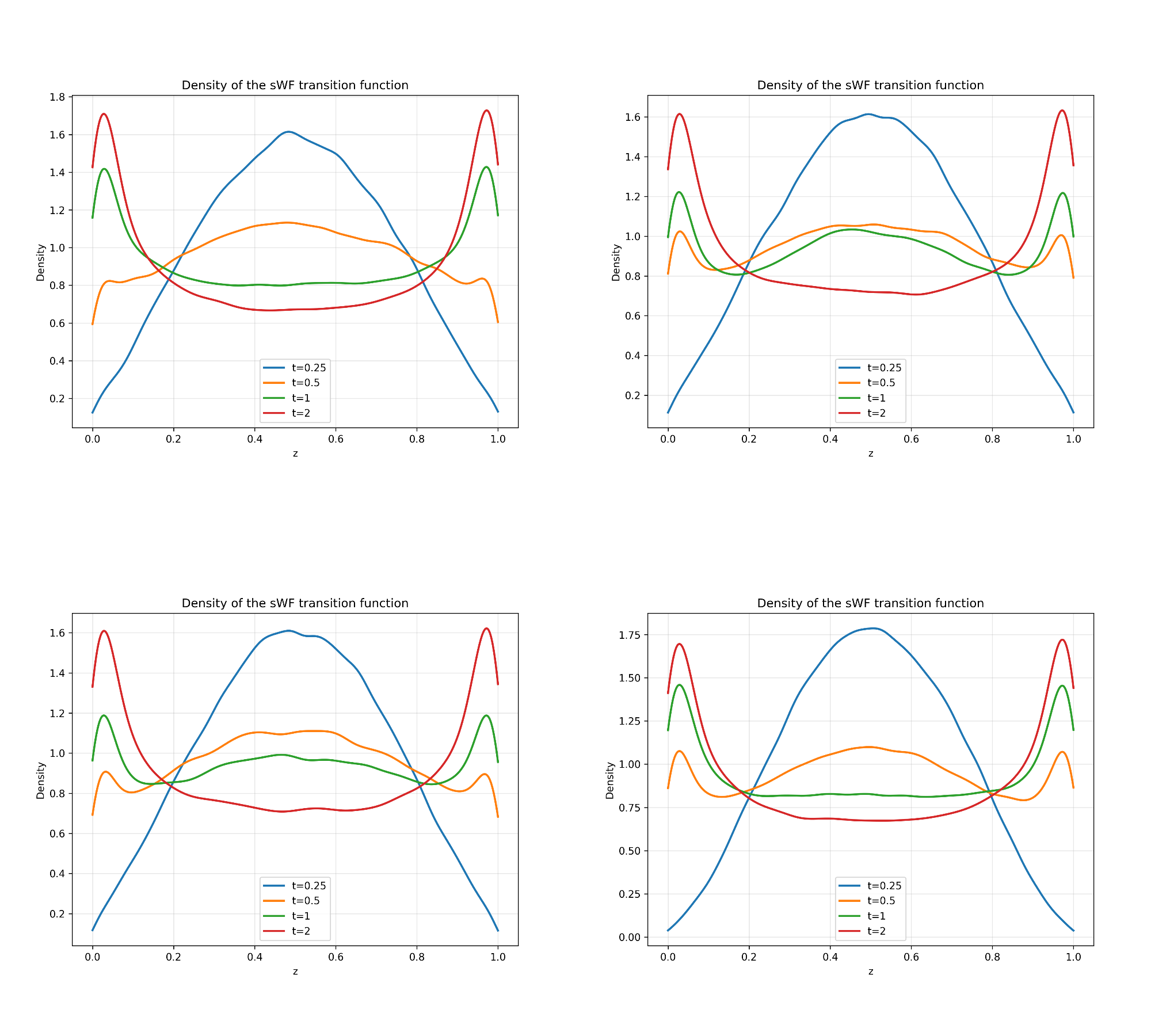}
    \caption{$100,000$ samples from the sWF transition function $\widetilde{p}_t^{({\theta},C)}(x,\dif z)$ with mutation parameters ${\theta}=(1,1)$, initial coordinate $x=1/2$ and time-input $t\in\{0.25,0.5,1,2\}$ where each pane represent a different subordinator clock $C=S$, (clockwise from topleft): (a) the identity $S(t)\equiv t$, (b) $\alpha$-Stable subordinator with $\alpha = 0.5$; (c) Inverse Gaussian with $\delta=\gamma=1$; (d) Gamma process with $a=b=1$.}
    \label{fig: sWF tf 1 1}
\end{figure}

\section{Filtering and smoothing algorithms}

The filtering algorithm for a Markov sWF is presented below in \ref{alg:Kd_filtering}, which makes use of the Bayesian update and prediction steps outlined in Section \ref{sec Markov filtering}.

\begin{algorithm}
\caption{Filtering Algorithm}
\label{alg:Kd_filtering}
\begin{algorithmic}[1]
    \State $\boldsymbol{m}_0 \gets {y}_0$
    \State $W_0 \gets \{1\}$
    \State $W^* \gets \{\widetilde{q}_{\boldsymbol{m}_0 \boldsymbol{k}}(\Bar{t}_0) : \boldsymbol{k} \leq \boldsymbol{m}_0\}$
    
    \For{$i = 1$ \textbf{to} $N$}
        \State \Comment{Update step}
        \State $W_i \gets \left\{ \frac{\boldsymbol{w}_{\boldsymbol{m}}^* \mu_{\boldsymbol{m}}(\boldsymbol{y}_i)}{\sum_{\boldsymbol{k} \leq \boldsymbol{m}_0} w^*_{\boldsymbol{k}} \mu_{\boldsymbol{k}}(\boldsymbol{y}_i)} \;\middle\vert\; \boldsymbol{m} \leq \boldsymbol{m}_{i-1}, \boldsymbol{w}^* \in W^* \right\}$
        \State $\boldsymbol{m}_i \gets \boldsymbol{y}_i + \boldsymbol{m}_{i-1}$
        
        \State \Comment{Prediction step}
        \State $W^* \gets \left\{ \sum_{\boldsymbol{m}_{i-1} \leq \boldsymbol{m} \leq \boldsymbol{m}_i, \boldsymbol{n} \leq \boldsymbol{m}} \boldsymbol{w}_{\boldsymbol{m}}^{(i)} \widetilde{q}_{\boldsymbol{m}\boldsymbol{n}}(\Bar{t}_{i}) : \boldsymbol{w}^{(i)} \in W_i \right\}$
    \EndFor
    
    \State \Return $(\boldsymbol{m}_i)_{0\leq i \leq N}$ and $(W_i)_{0\leq i \leq N}$
\end{algorithmic}
\end{algorithm}
The $i$th smoothing distribution, for $i\in\{0,\ldots,N\}$ is defined as the posterior distribution of $\widetilde{{X}}_i$ conditionally given on data $y_0,\ldots,y_N.$ It can be expressed in terms of the so-called {\em cost-to-go function}, also known as the information filter, and the $i$th filtering distribution $\nu_i$:\begin{align}
    \label{Smoothing distributions}
    \nu_{i\vert 0:n}(\dif {{x}}) &= \frac{p(y_{i+1:n}\vert {{x}}) \nu_{i}(\dif {{x}})}{p(y_{i+1:n}\vert y_{0:i})}.
\end{align}
This is just a version of Bayes theorem, which uses the fact that, when the signal is Markovian, conditionally on $\widetilde{{X}}_i= x$, the distribution of \lq\lq future data, $Y_{i+1:n}$ does not depend on the past observations $y_{1:i}.$ The collection of smoothing distributions is known as the smoothing filter. Computability of the smoothing filter for a Markovian sWF is a direct consequence of Proposition \ref{cor: base filter steps} and Corollary \ref{cor: markov comput}. It extends the known analogous result  on smoothing for WF diffusions, given by Theorem 4 in \cite{kkk}, with the only difference that role of the WF diffusion's dual $Z$ is replaced by the subordinated dual $\widetilde{Z}$
\begin{cor} 
    The model \eqref{The Model} admits a computable smoothing filter.
\end{cor}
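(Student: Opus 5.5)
We must prove that the model \eqref{The Model}, with a Markov sWF signal (subordinator clock $S$) and $\nu^*\in\F$, admits a computable smoothing filter. The plan is to use the factorisation \eqref{Smoothing distributions}. By Corollary \ref{cor: markov comput}, every filtering distribution $\nu_i$ already lies in $\F$ with explicitly computable weights. It therefore remains to show that the cost-to-go function $x\mapsto p(y_{i+1:n}\mid x)$ is a finite linear combination, with nonnegative computable coefficients, of the duality functions $g(x,\boldsymbol{m})$ from \eqref{eqn the h function}. The product of such a function with an element of $\F$ then lies again in $\F$, by the conjugacy identity $\mathcal{D}_{\theta+\boldsymbol{l}}=g(\cdot,\boldsymbol{l})\mathcal{D}_\theta$. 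The renormalisation is an explicit ratio of Dirichlet moments.

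We handle the cost-to-go function by backward induction on $i$. For $i=n$ it equals $1=g(x,\boldsymbol{0})$. For the inductive step, write
\[
p(y_{i+1:n}\mid x)=\widetilde T_{t_{i+1}-t_i}\bigl[f_{\cdot}(y_{i+1})\,p(y_{i+2:n}\mid \cdot)\bigr](x),
\]
which uses the Markov property of $\widetilde X$ (valid because $C=S$ is a L\'evy subordinator). We have $f_x(y)=\mathcal{M}(y;|y|,x)=c(y)\,g(x,y)$, where $c(y)=\E_{\mathcal{D}_\theta}[\mathcal{M}(y;|y|,X)]$. Moreover $g(x,\boldsymbol{m})g(x,y)=\frac{(|\theta|)_{|\boldsymbol m|+|y|}\prod_i(\theta_i)_{m_i}(\theta_i)_{y_i}}{(|\theta|)_{|\boldsymbol m|}(|\theta|)_{|y|}\prod_i(\theta_i)_{m_i+y_i}}\,g(x,\boldsymbol{m}+y)$. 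Hence the bracket is a finite nonnegative combination of the functions $g(\cdot,\boldsymbol{m}+y_{i+1})$.

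Next we apply the time-changed duality of Proposition \ref{prop sub dual} together with Proposition \ref{prop t-dual}. This gives
\[
\widetilde T_t g(\cdot,\boldsymbol{m})(x)=\sum_{\boldsymbol{k}\le\boldsymbol{m}}\widetilde q^{*}_{\boldsymbol{m},\boldsymbol{k}}(t)\,g(x,\boldsymbol{k}),
\]
a finite sum, because $\widetilde Z$ is non-increasing. Its weights are probabilities given in closed form by \eqref{eq wtq}--\eqref{eq:wtlambda}, and they can be computed from the Laplace exponent $\psi$ alone. This completes the induction. It also shows that the index set of the cost-to-go expansion lies in $\{\boldsymbol{k}\le y_{i+1}+\dots+y_n\}$, so it is finite.

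Finally we combine the pieces. Write $\nu_i=\sum_{\boldsymbol{l}}w_{\boldsymbol{l}}\mathcal{D}_{\theta+\boldsymbol{l}}$ and $p(y_{i+1:n}\mid x)=\sum_{\boldsymbol{k}}u_{\boldsymbol{k}}g(x,\boldsymbol{k})$. Then $g(x,\boldsymbol{k})\mathcal{D}_{\theta+\boldsymbol{l}}(\dif x)$ is a constant multiple of $\mathcal{D}_{\theta+\boldsymbol{l}+\boldsymbol{k}}$, with an explicit constant given by a ratio of Pochhammer symbols. Normalising by $p(y_{i+1:n}\mid y_{0:i})$ yields a probability mixture in $\F$ with finitely many computable weights. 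The main obstacle we expect is the justification of the backward step. The issue is that for a subordinated process the prediction operator must still act on the functions $g$ through a dual with finitely supported, nonnegative transition weights. The nonnegativity and monotonicity statements of Proposition \ref{prop t-dual} are exactly what guarantee this, as opposed to the signed expansion \eqref{eq poly 2pred}.
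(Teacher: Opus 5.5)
Your argument is correct and is essentially the route the paper takes by deferring to Theorem 4 of \cite{kkk} with $Z$ replaced by $\widetilde Z=Z\circ S$ (cf.\ Algorithm \ref{alg:smoothing}). Your finite $g$-expansion of the cost-to-go is, up to normalisation, the density of the backward filter with respect to $\mathcal{D}_{\theta}$ (since $g(\cdot,\boldsymbol m)\mathcal{D}_\theta=\mathcal{D}_{\theta+\boldsymbol m}$); your duality step is the backward prediction step with weights $\widetilde q^*_{\boldsymbol m,\boldsymbol k}$; and your Pochhammer ratios are the constants $C_{\boldsymbol m,\boldsymbol n}$ that merge it with the forward filter. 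One slip: the constant in your product formula is inverted, and the correct identity is $g(x,\boldsymbol m)g(x,y)=\frac{(|\theta|)_{|\boldsymbol m|}(|\theta|)_{|y|}\prod_i(\theta_i)_{m_i+y_i}}{(|\theta|)_{|\boldsymbol m|+|y|}\prod_i(\theta_i)_{m_i}(\theta_i)_{y_i}}\,g(x,\boldsymbol m+y)$ (for $\theta=(1,1)$, $g(x,\boldsymbol e_1)^2=4x_1^2=\tfrac{4}{3}\,g(x,2\boldsymbol e_1)$, not $\tfrac{3}{4}$), though this does not affect the structure of the proof.
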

We omit the proof and instead refer to the proof of  Theorem 4 in \cite{kkk}, of which it follows exactly the same steps, where the weights are constructed from  $\widetilde{Z}=Z\circ S $ rather than $Z$.
\begin{algorithm}
\caption{Smoothing Algorithm}
\label{alg:smoothing}
\begin{algorithmic}[1]
    \State Compute $(\boldsymbol{m}_i)_{i \geq 1}$ and $(W_i)_{i \geq 1}$ via Algorithm \ref{alg:Kd_filtering}
    \State $\overleftarrow{W}_N \gets \{1\}$
    \State $\overleftarrow{\boldsymbol{m}}_{N|N} \gets {y}_N$
    \State $\overleftarrow{W}^* \gets \{\widetilde{q}_{\overleftarrow{\boldsymbol{m}}_{N|N} \boldsymbol{k}}(\Bar{t}_N) : \boldsymbol{k} \leq \overleftarrow{\boldsymbol{m}}_{N|N}\}$

    \For{$i = N-1$ \textbf{down to} $1$}
        \State \Comment{Backward update step}
        \State $\overleftarrow{W}_i \gets \left\{ \frac{\boldsymbol{w}_{\boldsymbol{m}}^* \mu_{\boldsymbol{m}}({y}_i)}{\sum_{\boldsymbol{k} \leq \boldsymbol{m}_0} w^*_{\boldsymbol{k}} \mu_{\boldsymbol{k}}({y}_i)} \;\middle\vert\; \boldsymbol{m} \leq \overleftarrow{\boldsymbol{m}}_{i+1|i+1:N}, \boldsymbol{w}^* \in \overleftarrow{W}^* \right\}$
        \State $\overleftarrow{\boldsymbol{m}}_{i|i:N} \gets {y}_i + \overleftarrow{\boldsymbol{m}}_{i+1|i+1:N}$

        \State \Comment{Backward prediction step}
        \State $\overleftarrow{W}^* \gets \left\{ \sum_{\overleftarrow{\boldsymbol{m}}_{i-1} \leq \boldsymbol{m} \leq \overleftarrow{\boldsymbol{m}}_i, \boldsymbol{n} \leq \boldsymbol{m}} \boldsymbol{w}_{\boldsymbol{m}}^{(i)} \widetilde{q}_{\boldsymbol{m}\boldsymbol{n}}(\Bar{t}_{i}) : \boldsymbol{w}^{(i)} \in \overleftarrow{W}_i \right\}$
    \EndFor

    \For{$i = 1$ \textbf{to} $N-1$}
        \State $\boldsymbol{M}_{i|0:N} \gets \{(\boldsymbol{m}, \boldsymbol{n}) : 0 \leq \boldsymbol{m} + \boldsymbol{n} \leq \overleftarrow{\boldsymbol{m}}_{i|i+1:N} + \boldsymbol{m}_{i|0:i}\}$
        \State $W_{i|0:N} \gets \left\{ w_{\boldsymbol{1}}^i \propto \sum_{\substack{\boldsymbol{m} \leq \overleftarrow{\boldsymbol{m}}_{i|i+1:N}, \boldsymbol{n} \leq \boldsymbol{m}_{i|0:i} \\ \boldsymbol{m}+\boldsymbol{n} = \boldsymbol{l}}} \overleftarrow{w}_{\boldsymbol{m}}^{(i+1)} w_{\boldsymbol{n}}^{(i)} C_{\boldsymbol{m}, \boldsymbol{n}} : \boldsymbol{1} \in \boldsymbol{M}_{i|0:N} \right\}$
    \EndFor

    \State \Return $(\boldsymbol{M}_{i|0:N})_{1 \leq i < N}$ and $(W_{i|0:N})_{1 \leq i < N}$
\end{algorithmic}
\end{algorithm}

\section{Proofs}
\label{Proof_SM}

\begin{proof}[Proof of Proposition \ref{prop tf sub}]
All the statements of the proposition are well-known to hold for the WF diffusion \cite{griffiths06,Griffiths10}. The proof for general sWF follows simply by replacing 4 (deterministic) with $C(t)$ (random). For \eqref{eqn sub tf polydual} we have
    \begin{align*}
    \widetilde{p}_t({{x}},\dif {u})&=\int_0^\infty\sum_{n=0}^\infty e^{-\lambda_ns} Q_n({x},{u})\mathcal{D}_{{\theta}}(\dif u)\ \zeta^t(\dif s)
    \notag\\
    &=\sum_{n=0}^\infty \left(\int_0^\infty e^{-\lambda_n s} \zeta^t(\dif s)\right)Q_n(x,u)\mathcal{D}_{{\theta}}(\dif u)
     \notag\\
    &=
    \sum_{n=0}^\infty \Phi_t(\lambda_n)Q_n(x,u)\mathcal{D}_{{\theta}}(\dif u),
    \end{align*}
    obtained using again independence between $X$ and $C$ (to be continued), and smoothness of both the Laplace transforms $\Phi_t$ and ${\cal D}_{{\theta}}$. Very much the same approach leads to \eqref{eqn tf subdual}.
\end{proof}

\begin{proof}[Proof of Proposition \ref{prop frac conv eqn}]
      First notice that, from the identity
$$\{R(t)<y\}=\{S(y)>t\}$$ 
one can use integration by parts to derive the well-known formula for the double Laplace transform (see e.g. \cite{Toaldo2015}, Formula (4.8)):
\begin{align}
\Phi(\gamma,\lambda):=\int_0^\infty e^{-\gamma t}\Phi_t(\lambda)\dif t=\frac{\psi(\gamma)}{\gamma(\lambda+\psi(\gamma))}.
    \label{eqn doulelap}
    \end{align}
Using $\int_0^\infty e^{-\gamma t}\kappa(t) dt =1/\psi(\gamma)$ (Formula 2.2,\cite{KSV2012})
and taking the Laplace transform in $t$ of the right hand side of \eqref{eqn gen kin}, one has
    \begin{align*}
        -\lambda\int_0^\infty e^{-\gamma t}\int_0^t\kappa(t-s)\Phi_s(\lambda) \dif s\dif t
        &=-\lambda\int_0^\infty \Phi_s(\lambda)\left[\int_s^\infty e^{-\gamma t} \kappa(t-s)\dif t\right]\dif s\notag\\
        &=-\lambda\frac{\Phi(\gamma,\lambda)}{\psi(\gamma)}
    \end{align*}
    Since $\Phi_0(\lambda)=1$, then taking the time-Laplace transform of the left hand side leads to the identity
    $$\Phi(\gamma,\lambda)=\frac{1}{\gamma}-\lambda\frac{\Phi(\gamma,\lambda)}{\psi(\gamma)}=\frac{\psi(\gamma)}{\gamma(\lambda+\psi(\gamma))}$$
    which coincides with \eqref{eqn doulelap} and the claim follows by uniqueness of the Laplace transform.\\
    Since \begin{align*}
    \frac{1}{\psi(\lambda)}\left[\beta +\int_0^\infty e^{-\lambda y}\bar\pi(y)\dif y \right]=\frac{1}{\lambda},\notag\end{align*} 
    then by Laplace inversion one has 
    $$\beta k(t)+\int_0^t k(t-s) \bar\pi(s) \dif s =1,$$
and \eqref{eqn phi fractional} follows by applying $I^{(\beta,\pi)}_t$ to both sides of \eqref{eqn gen kin}.
    \end{proof}

\begin{proof}[Proof of Proposition \ref{prop sub dual}]
Let ${\cal T}_t$ and ${\cal T}^*_t$ denote the conditional expectation operators associated to the process $X$ (i.e. $T_tf(x)=\mathbb{E}[f(X(t))\mid X(0)=x]$) and to its $h$-dual $Z$, respectively, and let $\zeta^t$ be the law of $C(t)$. Duality means that, for any $(x,y)\in E\times F$ and any $t\geq 0$, 
$${\cal T}_t h(\cdot,y)(x)={\cal T}^*_t h(x,\cdot)(y).$$

Now, if $\widetilde{T}_t$ is the conditional expectation operator associated to the time-changed process $\widetilde X=X\circ C$, we have 
\begin{align}
    \widetilde{{\cal T}}_t h(\cdot,y)(x)&=\int_0^\infty {\cal T}_s h(\cdot,y)(x)\zeta^t(\dif s)\notag\\
    &=\int_0^t {\cal T}^*_s h(x,\cdot)(y)\zeta^t(\dif s)\notag\\
   &=\mathbb{E}\left[h(x, Z(C(t))\mid Z(0)=y\right]=:\widetilde{{\cal T}}_t^* h(x,\cdot)(y).\label{dualderiv}
\end{align}

\end{proof}
\begin{proof}[Proof of Proposition \ref{prop t-dual}]
From Proposition \ref{prop sub dual} (see in particular \eqref{dualderiv}), the dual $Z$, time-changed by $C$ must have a conditional distribution of the form 
$$\mathbb{E}\left[{q}_{|\boldsymbol{m}|, |\boldsymbol{l}|}(C(t))\right]=\left(\sum_{j=|\boldsymbol{l}|}^{|\boldsymbol{m}|}\mathbb{E}\left[e^{-{\lambda}_j C(t)}\right]{(-1)}^{j-|\boldsymbol{l}|}a_{j,|\boldsymbol{l}|}^{(|{\theta}|,|\boldsymbol{m}|)}\right)H(\boldsymbol l,\boldsymbol m)=\widetilde{q}_{|\boldsymbol{m}|, |\boldsymbol{l}|}(t)H(\boldsymbol l,\boldsymbol m),$$
where $(q_{n,k}(t))$ are the transition probabilities \eqref{eqn:dual_from_m} of the dual to the original WF diffusion $X$,
giving \eqref{eq wtq}. For every $\boldsymbol{m}$ and $t$ is well-defined as a probability mass function (as a mixture of probability distributions) with support $\{\boldsymbol {l}:\boldsymbol {l}\leq\boldsymbol{m}\}$ (the hypergeometric distribution $H(\boldsymbol {l},\boldsymbol{m})$ assigns zero mass to all points in $\{\boldsymbol {l}\leq\boldsymbol{m}\}$), hence the dual process is non-increasing almost surely. Formula \eqref{eq wtq surv} follows directly from \eqref{eq wtq}, since both $a_{|\boldsymbol{n}|, |\boldsymbol{n}|}^{(|{\theta}|,|\boldsymbol{n}|)}$ and $H(\boldsymbol l,\boldsymbol m)$ are equal to 1. \\
If $C=S$ (subordinator case), then $-\log\psi_t(\lambda_n)=\widetilde{\lambda}_n$ has the form \eqref{eq:wtlambda} by the L\'evy-Khintchine representation. Since the time-changed dual process is Markovian, its transition rates \eqref{eqn subrates}-\eqref{eqn subrates 1} can be obtained from $$\frac{\dif}{\dif t} \mathbb{P}[\widetilde{Z}(t)=\boldsymbol{l}\mid \widetilde{Z}(0)=\boldsymbol{n}] =\widetilde{\lambda}_{|\boldsymbol{n}|,|\boldsymbol{k}|}H(\boldsymbol{k},\boldsymbol{n}),$$
by exchanging differentiation and summation in $\widetilde{q}_{|\boldsymbol{n}|, |\boldsymbol{k}|}(t)$.\\
If $C=R$ (inverse of $S$), equations \eqref{eq wtq inv} and \eqref{eq wtq surv} follow from the integro-differential equations for $t\mapsto \Phi_t$ stated in Proposition \ref{prop frac conv eqn}. Using equation \eqref{eqn phi fractional} finally implies, after exchanging the order of summation and (generalised) differentiation $I^{(\beta,\pi)}_t$,
 \begin{align*}
   I^{(\beta,\pi)}_t{\widetilde{q}}^{ *}_{\boldsymbol{n}, \boldsymbol{k}}(t)\vert_{t=0}&=\left(\sum_{j=|\boldsymbol{k}|}^{|\boldsymbol{n}|} (-1)^{j-|\boldsymbol{k}|+1}a^{(|{\theta}|,|\boldsymbol{n}|)}_{j,|\boldsymbol{k}|}\ \lambda_j\Phi_t(\lambda_j) \vert_{t=0}\right)H(\boldsymbol k,\boldsymbol n)\\
   &=\left(\sum_{j=|\boldsymbol{k}|}^{|\boldsymbol{n}|} (-1)^{j-|\boldsymbol{k}|+1}a^{(|{\theta}|,|\boldsymbol{n}|)}_{j,|\boldsymbol{k}|}\ \lambda_j\right)H(\boldsymbol k,\boldsymbol n)\notag\\
   &=\left(\frac{\dif}{\dif t} q^{}_{\boldsymbol{n}, \boldsymbol{k}}(t)\vert_{t=0}\right)H(\boldsymbol k,\boldsymbol n),
 \end{align*}
where $q^{}_{\boldsymbol{n}, \boldsymbol{k}}(t)$ are the transition probability of the pure-death block-counting process $|Z(t)|$ dual to the underlying diffusion $X$ and \eqref{bcprates} implies \eqref{fracrates}, completing the proof.
\end{proof}

\begin{proof}[Proof of Proposition \ref{prop sampling sWF}]
Proposition 1 in \cite{Jenkins_2017} covers the case $\Psi(\lambda)=\lambda$, which provides a useful shortcut to the analogous result for general sWF duals.\\
Note that
\begin{align}
    \label{eqn ratio}
    \frac{{b}_{k+1}^{(t,{|{\theta}|,\psi})}(m)}{b_{k}^{(t,{|{\theta}|,\psi})}(m)} &= \frac{{|{\theta}|}+m-k-1}{k-m+1}\cdot \frac{{|{\theta}|}+2k+1}{{|{\theta}|}+2k-1} e^{-t(\psi(\lambda_{k+1})-\psi(\lambda_k))}\notag\\
    &=:f_m^{|{\theta}|}(k)e^{-t(\psi(\lambda_{k+1})-\psi(\lambda_k))}.
\end{align}The exponential term on the right-hand side is decreasing in $k$, since $\psi(\lambda_{k+1})>\psi(\lambda_k)$. The term $f_m^{|{\theta}|}(k)$ does not depend on the choice of subordinator $S$, i.e. on $\psi$. It has been shown in \cite{Jenkins_2017} (Proof of proposition 1) that $(f_m^{|{\theta}|})'(k)<0$ for sufficiently large $k$. Then, $\frac{b_{k+1}^{(t,{|{\theta}|})}(m)}{b_{k}^{(t,{|{\theta}|})}(m)}$ is monotonically decreasing in $k$ to $0$ for large $k$ and, in particular, (see \cite{Jenkins_2017}, formulae (26)-(27)) this is the case for $k>(\sqrt{2(m-1)+{|{\theta}|}}-{|{\theta}|})/2$ and, since
\begin{align*}
    \frac{(\sqrt{2(m-1)+{|{\theta}|}}-{|{\theta}|})}{2}    <m.
\end{align*} then $(f_m^{|{\theta}|})'(k)<0$ for all $k\geq m$. 
 Furthermore \begin{align*}
    f_m^{|{\theta}|}(k) < f_k^{|{\theta}|}(k)=({|{\theta}|}+2k+1).
\end{align*}Therefore, once $\frac{b_{k+1}(k)}{b_k(k)}<1$ it is guaranteed that $\frac{b_{i}(m)}{b_i(m)}<1$ for all $i\geq k+1$ and $j\geq k$. Now, for $B_m^{(t,{|{\theta}|})}=0$ to hold, we need $k\geq m$ sufficiently large such that\begin{align}
   L(k):= ({|{\theta}|}+2k+1)e^{-t(\psi(\lambda_{k+1})-\psi(\lambda_k))} < 1.\label{desired}
\end{align}
For every $\lambda\geq 0$, 
\begin{align*}
    \psi(\lambda)=\beta \lambda+ \int_{0+}^\infty{(1-e^{-u\lambda})\pi(\dif u)}\geq \beta\lambda,
\end{align*}
hence one can establish
\begin{align}
    L(k)&=({|{\theta}|}+2k+1)e^{-t(\psi(\lambda_{k+1})-\psi(\lambda_k))} \notag\\
    &= ({|{\theta}|}+2k+1)e^{-t\beta (\frac{2k+{|{\theta}|}}{2})}e^{-t\int_{0+}^\infty{(e^{-\lambda_k u}-e^{-\lambda_{k+1}u})\pi(\dif u)}} \notag\\
    &=({|{\theta}|}+2k+1)e^{-t\beta (\frac{2k+{|{\theta}|}}{2})}e^{-t\int_{0^+}^\infty e^{-\lambda_{k}u}\left(1-e^{-\frac{2k+|\theta|}{2}}\right)\pi(\dif u)}\label{lnewbound}.
\end{align}
We need to show that $L$ is eventually decreasing for large values of $k$, i.e. the exponential terms eventually decrease faster than the linear increase of $({|{\theta}|}+2k+1)$. Now, if $\beta>0$, the left-hand side of \eqref{lnewbound} is bounded above by
$$({|{\theta}|}+2k+1)e^{-t\beta (\frac{2k+{|{\theta}|}}{2})} $$
and it follows
from Proposition 1 in \cite{Jenkins_2017} that $L$ eventually becomes decreasing for sufficiently large $k$.\\
For the case $\beta=0$, we assume \eqref{eq levycondition}. Consider the function
$$h(k):=\int_{0^+}^\infty e^{-\lambda_{k}u}\left(1-e^{-\frac{2k+\theta}{2}u}\right)\pi(\dif u),$$
and treat $L(k)=({|{\theta}|}+2k+1)e^{-h(k)t}$ as a continuous function in its argument $k$. Taking the first derivative, one has
\begin{align}
   L^\prime(k)&=2e^{-h(k)t} -({|{\theta}|}+2k+1)h^\prime(k)te^{-h(k)t}\notag\\
   &=e^{-h(k)t}\left[2-({|{\theta}|}+2k+1)h^\prime(k)t\right].\label{eq Lprime}
\end{align}
Now, 
\begin{align*}
h^\prime(k)&=\int_0^\infty ue^{-\lambda_k u}\left[-\frac{2k+|\theta|-1}{2}\left(1-e^{-\frac{2k+|\theta|}{2}u}\right)+
e^{-\frac{2k+|\theta|}{2}u}\right]\pi(\dif u)\\
&=:\int_0^\infty ue^{-\lambda_k u}B(k,u)\pi(u)\dif u.
\end{align*}

Define $M_k:=2\log k$ and $I_k:=(0,M_k/\lambda_k))$, $I_k^c:=[M_k/\lambda_k,\infty).$ Then split the integral $h^\prime(k)$ into the sum of the integrals over these two regions.
First, consider the tail region $I_k^c$. Since $u\geq M_k/\lambda_k$, then $e^{-\lambda_ku}\leq e^{-M_k}=k^{-2}.$ Moreover,
$$|B(k,u)|\leq \frac{2k+|\theta|-1}{2}+1\sim \gamma k$$
for some constant $\gamma,$ so
\begin{align}
    \Big\vert\int_{I^c_k} ue^{-\lambda_k u}B(k,u)\pi(u)\dif u \Big\vert\leq \delta k^{-1}\int_0^\infty u\pi(u)\dif u\sim k^{-1}\to 0,\ \ \text{as } k\to\infty
\end{align}
therefore the contribution of this integral is negligible for large $k$.\\
Now on $I_k$, $u\leq M_k/\lambda_k\sim k^{-2}\log k.$ We can Taylor expand the integrand and use
$$1-e^{-\frac{2k+|\theta|}{2}u}=\frac{2k+|\theta|}{2}u+{\cal O}(k^2u^2),$$
to obtain
\begin{align}B(k,u)\approx 1-\left(1+\frac{2k+|\theta|-1}{2}\right)\frac{2k+|\theta|}{2}u+{\cal O}(k^2u^2).\label{smallu}\end{align}
Thus, changing variable $y=\lambda_ku,$ $\dif u=\lambda_k^{-1}\dif y,$ we have
\begin{align*}
 ue^{-\lambda_k u}B(k,u)\pi(u)\dif u
&\approx\frac{y}{\lambda_k^2}e^{-y}\left[1-\left(1+\frac{2k+|\theta|-1}{2}\right)\frac{2k+|\theta|}{2}\frac{y}{\lambda_k}\right]\pi\left(\frac{y}{\lambda_k}\right)\dif y\\
&\sim \frac{y}{\lambda_k^2}e^{-y}[1-y]\pi\left(\frac{y}{\lambda_k}\right)\dif y,\ \ \ \text{for large $k$.}
\end{align*}
Using the assumption \eqref{eq levycondition}, for small $y$,$\pi(y/\lambda_k)\sim c [y/\lambda_k]^{-(1+\alpha)},$ so
we can write
\begin{align*}
    h^\prime(k)&\approx \lambda_k^{\alpha-1}\int_0^\infty 
  y^{-\alpha}e^{-y}[1-y]\dif y\\
  &=\lambda_k^{\alpha-1} (\Gamma(1-\alpha)-\Gamma(2-\alpha))\\
  &=\lambda_k^{\alpha-1}\alpha\Gamma(1-\alpha)\\
  &\sim \alpha\Gamma(1-\alpha)k^{2\alpha-2}>0\ \ \ \text{for large $k$},
\end{align*}
where we have used the known property of the Gamma function: $\Gamma(a+1)=a\Gamma(a),$ and the fact that the integral over $I_k^c$ vanisher faster than the integral over $I_k$
Now, turning to the derivative of $L$, this implies
\begin{align}
L^\prime(k)&=e^{-h(k)u}\left[2-(2k+\theta-1)uh^\prime(k)\right]\notag\\
&\sim e^{-h(k)u}\left[2-2u \alpha\Gamma(1-\alpha)k^{-1+2\alpha}\right]
    \end{align}
Therefore, if $\alpha>1/2,$ $k^{-1+2\alpha}\to\infty$ and $L^\prime (k)<0$ eventually, and this proves the claim.
Notice that if $\alpha\leq 1/2,$ then $L^\prime(k)>0$ eventually, so under the assumption that $\pi(u)\sim  u^{-(1+\alpha)}$ near zero, the condition $\alpha\in(1/2,1)$ is also sufficient.
\end{proof}

\begin{proof}[Proof of Proposition \ref{cor: base filter steps}]
    If $\nu^*=\sum_{\boldsymbol{m}}w_{\boldsymbol{m}}\D_{{\theta}+\boldsymbol{m}}\in{\cal F},$ then $\nu_{0|0:0}$ is a sum of the form
    $$ \nu_{0\mid 0:0}(\dif \boldsymbol x)=\sum_{\boldsymbol m}w^*_{\boldsymbol{m}}({y}_0)g_{{\theta}}(\boldsymbol x,\boldsymbol{m}+{y_0}){\cal D}_{{\theta}}(\dif{x})$$
    for non-negative coefficients 
    \begin{align*}
        w^*_{\boldsymbol{m}}({y}_0):=w_{\boldsymbol{m}}\frac{\prod_{j=1}^K (\theta_j+m_j)_{(y_{0,j})}}{(|{\theta}+\boldsymbol{m}|)_{(|y_0|)}},
    \end{align*}where $y_{0,j}$ is defined as the $j$th coordinate of the vector $y_{0}$.
    
    The function $g_{\boldsymbol{\theta}}$ is precisely the duality function introduced in \eqref{eqn the h function}. So $\nu_0\in\bar\F$. This is the well-known Bayesian conjugacy property of mixtures of Dirichlet distributions likelihoods\cite{Ant74}.
That $\nu_{1\mid0:0}$ is also in $\F$ follows from reversibility, for which
we have:
  \begin{align*}
      \nu_{1\vert 0:0}(\dif{x})&=\,\sum_{\boldsymbol{m}}w^*_{\boldsymbol{m}}({y}_0)\int_{\Delta_K}g_{{\theta}}( z,\boldsymbol{m}+{y_0}){\cal D}_{{\theta}}(\dif{z})\widetilde P_{t_1}({z},\dif{x})\notag\\
      &=\,\sum_{\boldsymbol{m}}w^*_{\boldsymbol{m}}({y}_0)\left(\int_{\Delta_K}g_{{\theta}}( z,\boldsymbol{m}+{y_0})\widetilde P_{t_1}({x},\dif{z})\right){\cal D}_{{\theta}}(\dif{x})
  \end{align*}
  We can now use duality: the term between parenthesis corresponds to $\E_{x}\left[g(\wt X(t_1);\bs m + y_0)\right]$, hence Proposition \ref{prop t-dual} ensures that the dual process, starting from $\boldsymbol{m}+{y_0}$, can only descend, in time $t_1$, through states within the {\em finite} set $\left\{\boldsymbol{l}\in\mathbb Z_+^K:|\boldsymbol{l}|\leq|\boldsymbol{m}+{y_0}|\right\},$ according to the probabilities ${\widetilde{q}}^{ *}_{\boldsymbol{m}, \boldsymbol{l}}(t)$ given in \eqref{eq wtq}. 
Therefore
\begin{align*}\int_{\Delta_K}g_{{\theta}}( z,\boldsymbol{m}+{y_0})\widetilde P_{t_1}({x},\dif{z})=\E_{m + y_0}\left[g(x;\wt Z(t_1))\right]=\sum_{\left\{\boldsymbol{l}\in\mathbb Z_+^K:|\boldsymbol{l}|\leq|\boldsymbol{m}+{y_0}|\right\}} {\widetilde{q}}^{ *}_{\boldsymbol{m}+{y_0}, \boldsymbol{l}}(t_1)g_{{\theta}}( x,\boldsymbol{l})
\end{align*}
and\begin{align*}
\nu_{1\vert 0:0}(\dif{x})=\sum_{\boldsymbol{m}}w^*_{\boldsymbol{m}}({y}_0)\sum_{\left\{\boldsymbol{l}\in\mathbb Z_+^K:|\boldsymbol{l}|\leq|\boldsymbol{m}+{y_0}|\right\}} {\widetilde{q}}^{ *}_{\boldsymbol{m}+{y_0}, \boldsymbol{l}}(t){\cal D}_{{\theta}+\boldsymbol{l}}(\dif{x}).
\end{align*}

  All coefficients in the mixture are positive whenever $\nu^*\in{\F}$, so $\nu_{1\vert 0:0}\in\F.$ Moreover, if $\nu^*\in\bar\F,$ the outer sum in $\boldsymbol{m}$ is over a finite set, and the inner sum is always over a finite set, therefore $\nu_{1\vert 0:0}\in\bar\F.$
\end{proof}

\begin{proof}[Proof of Proposition \ref{prp vsemigroup}]
    We want to prove, for $s\leq u\leq t$ the semigroup property
    $${\cal T}_{s,u} {\cal T}_{u,t}f(x,r)={\cal T}_{s,t}f(x,r),\ \ x,r\in\Delta_K\times[0,\infty).$$
By definition,
\begin{align}
   {\cal T}^V_{s,u} {\cal T}^V_{u,t}f(x,r)&=\int_0^\infty {\cal T}^{WF}_y({\cal T}^V_{u,t}f)(\cdot, r+y)(x)\wt G_{s,u}(r,\dif y)  \notag\\
   &=\int_0^\infty {\cal T}^{WF}_y\left(\int_0^\infty {\cal T}^{WF}_zf(\cdot,r+y+z)(x)\wt G_{u,t}(r+y,\dif z)\right)\wt G_{s,u}(r,\dif y)\notag\\
   &=\int_0^\infty {\cal T}^{WF}_{y+z}f(\cdot,r+(y+z))(x)\int_0^\infty\wt G_{u,t}(r+y,\dif z)\wt G_{s,u}(r,\dif y),\label{eq gsem}
\end{align}
where the last equality follows from Fubini and from the semigroup property of the (time-homogeneous) operator $({\cal T}^{WF}_t)$. But
$$\int_0^\infty\wt G_{u,t}(r+y,\dif z)\wt G_{s,u}(r,\dif y)=\wt G_{s,t}(r,\dif z),$$
Thus the right-hand side of \eqref{eq gsem} is equal precisely to ${\cal T}_{s,t}f(x,r).$
\end{proof}

\begin{proof}[Proof of Proposition \ref{prp: i-th v-posterior}]
We begin with a Lemma proving the proposition for the case $i=1$. The proof for general $i$ will follow by induction.

\begin{lem}
\label{l:1st frac filter}
    For the augmented signal $V$ with law given by \eqref{eq: S-sg} and initial distribution $\bar\nu^*$:
    \begin{itemize}
        \item [(i)]
  the first predictive distribution for $V(t_1)$ given data $Y_0=y_0$ is
    \begin{align}
        \nu^V_{1|0:0}(\dif( x,r)\mid t_1)=G_{0,t_1}(\dif r)\nu^{WF}_{1|0:0}(\dif  x\mid r);
        \label{eq:v-1pred}
    \end{align}
\item[(ii)] the update for $V$ at time $t_1$ given $y_0,y_1$ is
    \begin{align}
        \nu^V_{1|0:1}(\dif( x,r)\mid t_1)
        &=\frac{G_{0,t_1}(\dif r)m_{1|0}^{WF}(y_1\mid y_0; r)}{m^V_{1|0}(y_1\mid y_0; t_1)}\nu^{WF}_{1|0:1}(\dif  x\mid r);\label{eq:1_upd_v}
    \end{align}
    \item[(iii)] The conditional distribution of $R(t_1)$ given $y_{0:1}$ is given by
    \begin{align}
 G^*_{t_1}(\dif r\mid y_{0:1}):=\mathbb{P}[R(t_1)\in\dif r\mid Y_{0:1}=y_{0:1}]=\frac{G_{0,t_1}(\dif r)\ m_{1|0}^{WF}(y_1\mid y_0; r)}{m^V_{1|0}(y_1\mid y_0; t_1)}.\label{eq:1st cond marg}
    \end{align}
      \end{itemize}
\end{lem}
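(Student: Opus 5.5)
The lemma is the case $i=1$ of Proposition \ref{prp: i-th v-posterior}, and I will derive all three parts from the augmented-signal semigroup of Proposition \ref{prp vsemigroup} plus Bayes' rule. The only inputs are the independence of $X$ and $R$ and the fact that the emission density depends on $V$ only through its first coordinate.

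\emph{Step 1: the update at time $0$.} Since $R(0)=0$ almost surely, the initial law is $\bar\nu^*=\nu^*\times\delta_0$. Because $f_{(x,r)}(y_0)=f_x(y_0)$, the update at time $0$ is
\begin{align*}
\nu^V_{0|0:0}(\dif(x,r))=\nu^{WF}_{0|0:0}(\dif x)\,\delta_0(\dif r),
\end{align*}
where $\nu^{WF}_{0|0:0}$ is the usual conjugate update, which lies in $\F$ by Proposition \ref{cor: base filter steps}.

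\emph{Step 2: part (i), the prediction.} I apply $\mathcal T^V_{0,t_1}$ from \eqref{eq: S-sg} to a test function $f(x,r)$ and integrate against $\nu^V_{0|0:0}$. Starting from $r=0$, the kernel $\wt G_{0,t_1}(0,\dif u)$ is simply the law $G_{0,t_1}(\dif u)$ of $R(t_1)$. This gives
\begin{align*}
\int f\,\dif\nu^V_{1|0:0}=\int_0^\infty G_{0,t_1}(\dif r)\int_{\Delta_K}\nu^{WF}_{0|0:0}(\dif z)\,\mathcal T^{WF}_r f(\cdot,r)(z).
\end{align*}
The inner integral is $\int f(x,r)\,\nu^{WF}_{1|0:0}(\dif x\mid r)$, namely the WF predictive at operational time $r$. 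Fubini, justified because $f$ is bounded and everything is nonnegative, then gives \eqref{eq:v-1pred}.

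\emph{Step 3: parts (ii) and (iii), the update.} I multiply \eqref{eq:v-1pred} by $f_x(y_1)$ and normalise. For each fixed $r$, the WF Bayes identity reads
\begin{align*}
f_x(y_1)\,\nu^{WF}_{1|0:0}(\dif x\mid r)=m^{WF}_{1|0}(y_1\mid y_0;r)\,\nu^{WF}_{1|0:1}(\dif x\mid r).
\end{align*}
The normaliser is $\int G_{0,t_1}(\dif r)\,m^{WF}_{1|0}(y_1\mid y_0;r)$. By Step 2 this equals $\int f_x(y_1)\,\nu^V_{1|0:0}(\dif x,\R_+)=m^V_{1|0}(y_1\mid y_0;t_1)$, which gives \eqref{eq:1_upd_v}. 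Integrating out $x$ then yields \eqref{eq:1st cond marg}, since $\nu^{WF}_{1|0:1}(\cdot\mid r)$ is a probability measure.

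\emph{Main obstacle.} The only delicate point is measurability in $r$ of the maps $r\mapsto\nu^{WF}_{1|0:j}(\dif x\mid r)$ and $r\mapsto m^{WF}_{1|0}(y_1\mid y_0;r)$, which is needed to justify the mixtures and the Fubini exchange. This follows from the explicit finite-mixture form: the weights are finite combinations of $e^{-\lambda_j r}$ (via the dual probabilities $q_{\bs m,\bs l}(r)$), hence continuous in $r$. Positivity of the normaliser holds because the multinomial likelihood is positive on a set of positive Dirichlet mass.
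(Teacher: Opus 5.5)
Your proposal is correct and follows essentially the paper's route. Part (i) comes from the independence of $X$ and $R$ together with $R(0)=0$; your use of ${\cal T}^V_{0,t_1}$ at $r=0$ is exactly that computation. Parts (ii) and (iii) come from Bayes' rule on the joint predictive, using the per-$r$ identity $f_x(y_1)\,\nu^{WF}_{1|0:0}(\dif x\mid r)=m^{WF}_{1|0}(y_1\mid y_0;r)\,\nu^{WF}_{1|0:1}(\dif x\mid r)$ and then integrating out $x$.

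One small point on your measurability argument. The lemma does not assume $\nu^*\in\F$, so you should not rely on the finite-mixture form. Measurability in $r$ holds in general because the Feller WF transition kernel $p_r(z,\cdot)$ is jointly measurable in $(r,z)$.
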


\begin{proof}[Proof of Lemma \ref{l:1st frac filter}]
Equation \eqref{eq:v-1pred} follows simply by independence of $\bs X$ and $R$, and the fact that $R(0)=0$ almost surely. Now, since
    $$ \nu^{WF}_{1|0:1}(\dif x\mid r)=\frac{{\rm Bin}(y_1\mid x;|y_1|)\nu_{1|0:0}^{WF}(\dif x\mid r)}{m^{WF}_{1|0}(y_1\mid y_0; r)}$$
with
    $$m^{WF}_{1|0}(y_1\mid y_0; r)=\int_{\Delta_K} {\rm Bin}(y_1\mid x;|y_1|)\nu_{1|0:0}^{WF}(\dif x\mid r),$$
then from \eqref{eq:v-1pred} follows that
\begin{align}
\nu^V_{1|0:1}(\dif(x,r)\mid t_1)&=\frac{f_{x}(y_1)\ \nu^V_{1|0:0}(d(x, r)\mid t_1)}{\int_{\Delta_K\times\R_+}f_{x}(y_1)\ \nu^V_{1|0:0}(d(x, r)\mid t_1)}
\notag \\
&=\frac{G_{0,t_1}(r)\ m^{WF}_{1\mid 0}(y_1\mid y_0; r)}{m^V_{1|0}(y_1\mid y_0; t_1)}\ \left(\frac{f_{x}(y_1)\nu^{WF}_{1\mid 0:0}(\dif x\mid r)}{m^{WF}_{1\mid 0}(y_1\mid y_0; r)}\right)\notag\\
&=\frac{G_{0,t_1}(r)\ m^{WF}_{1\mid 0}(y_1\mid y_0; r)}{m^V_{1|0}(y_1\mid y_0; t_1)}\ \nu^{WF}_{1|0:1}(\dif x\mid r)
\label{eq:1stpred_v}\end{align}
\end{proof}
 We can now prove the general statement for any $i\geq 1$ by induction. Assume the claims (i)-(iii) hold for $i=k-1$. Since $V$ is a HMM and from \eqref{eq: S-sg},
    \begin{align*}
        \nu^V_{k+1\mid 0:k}(\dif ( x_{k+1}, r_{k+1})\mid t_{1:k+1})&=\int_{\Delta_K\times\mathbb R_+}\nu^V_{k\mid 0:k}(\dif (x_k, r_k)\mid t_{i:k})G_{t_k,t_{k+1}}(r_k, \dif r_{k+1}) P^{WF}_{r_{k+1}-r_{k}}(x_k, \dif x_{k+1}).
    \end{align*}
    From the inductive hypothesis on the update operator, this implies 
    \begin{align*}
        & \nu^V_{k+1\mid 0:k}(\dif (\bs x_{k+1}, r_{k+1})\mid t_{1:k+1})\\
        &=\int_{\Delta_K\times\mathbb R_+}\left(\int_{\mathbb{R}_+^{k-1}} G^*_{t_{1:k}}(\dif r_{1:k-1},\dif r_k\mid y_{0:k})\nu^{WF}_{k\mid0:k}(\dif x_{k}\mid r_{0:k})\right)
        G_{t_k,t_{k+1}}(\dif r_{k+1}\mid r_k ) P^{WF}_{r_{k+1}-r_{K}}(x_k, \dif x_{k+1})\\
        &=\int_{\mathbb{R}_+^{k}}G^*_{t_{1:k}}(\dif r_{1:k}\mid y_{0:k})G_{t_k,t_{k+1}}(\dif r_{k+1}\mid r_k)
        \left(\int_{\Delta_k}\nu^{WF}_{k\mid 0:k}(\dif x_{k}\mid r_{0:k})P^{WF}_{r_{k+1}-r_{K}}(x_k, \dif x_{k+1})\right)\\
        &=\int_{\mathbb{R}_+^{k}}G^*_{t_{1:k}}(\dif r_{1:k}\mid y_{0:k})G _{t_k,t_{k+1}}(\dif r_{k+1}\mid r_k)\ \nu^{WF}_{i+1|0:i}(\dif x_{k+1}\mid r_{1:k+1}),
    \end{align*}
   which proves \eqref{eq:i-th v-pred}. (In the first equality, the quantity between parenthesis is integrated over $r_{1:k-1}$. We have applied Fubini in the second-to-last equality).\\
    Looking now at the update, by Bayes' theorem:
    \begin{align*}
        \nu^{V}_{k+1|0:k+1}(\dif (x_{k+1}, r_{k+1}))&=
        \frac{\nu^V_{k+1\mid:0:k}(\dif (x_{k+1}, r_{k+1})\mid t_{1:k+1})\ f_{x_{k+1}}(y_{k+1})}{m^V_{k+1\mid 0:k}(y_{k+1}\mid y_{0:k})}
    \end{align*}
    Applying \eqref{eq:i-th v-pred},
    \begin{align}
        &\nu^{V}_{k+1|0:k+1}(\dif (x_{k+1}, r_{k+1}))\notag\\
        &=\int_{\mathbb{R}_+^k}\left(\frac{\nu^{WF}_{k+1\mid 0:k}(\dif \bs{x}_{k+1}\mid r_{1:{k+1}})f_{\bs x_{k+1}}(y_{k+1})}{m^V_{k+1\mid 0:k}(y_{k+1}\mid y_{0:k})}\right)H^*_{t_1,\ldots,t_k}(\dif r_{1:k}\mid y_{0:k})H_{t_k,t_{k+1}}(\dif r_{k+1}\mid r_k)\notag\\
        &=\int_{\mathbb{R}_+^k}\nu^{WF}_{k+1|0:k+1}(\dif x_{k+1}\mid r_{1:k+1})\frac{m^{WF}_{k+1\mid 0:k}(y_{k+1}\mid y_{0:k},r_{0:k+1})}{m^V_{k+1\mid 0:k}(y_{k+1}\mid y_{0:k})}\ H^*_{t_1,\ldots,t_k}(\dif r_{1:k}\mid y_{0:k})H_{t_k,t_{k+1}}(\dif r_{k+1}\mid r_k).
        \label{eq:H-version}
        \end{align}
        Since this holds for all $y_{0:k+1}, x_{k+1}, r_{k+1},$ \eqref{eq:H-version} implies that a version of the regular conditional distribution of $(R(t_1),\ldots, R(t_{k+1}))$, given $y_{0:k+1},$ is given precisely by $$H^*_{t_1,\ldots,t_{k+1}} (\dif r_{1:k+1}\mid y_{0:k+1})=\frac{m^{WF}_{k+1\mid 0:k}(y_{k+1}\mid y_{0:k},r_{0:k+1})}{m^V_{k+1\mid 0:k}(y_{k+1}\mid y_{0:k})}H^*_{t_1,\ldots,t_k}(\dif r_{1:k}\mid y_{0:k})H_{t_k,t_{k+1}}(\dif r_{k+1}\mid r_k),$$
        almost surely with respect to the marginal law of $Y_{0:k+1}.$ This proves \eqref{eq:ith v-update} as well as \eqref{eq:i-th frac marg}.\\
        
\end{proof}

\end{document}